\newcommand{\Spec}{{\operatorname{Spec}}}
\newcommand{\de}{\delta}
\newcommand{\ep}{\varepsilon}
\newcommand\chars[2]{\left[\begin{smallmatrix}#1\\ #2\end{smallmatrix}\right]}
\newcommand\tc[2]{\theta\chars{#1}{#2}}
\newcommand\tn{\theta_{\rm null}}
\newcommand\vtn{\vartheta_{\rm null}}
\newcommand\tnm{\theta_{\rm m,\, null}}
\newcommand\tnmz{\theta_{\rm m_0,\, null}}
\newcommand\gn{\grad_{\rm null}}
\newcommand\vgn{{\mathcal G}_{\rm null}}
\newcommand\gnm{\grad_{m,\,\rm null}}
\newcommand\gnmz{\grad_{m_0,\,\rm null}}
\newcommand{\CC}{{\mathbb{C}}}
\newcommand{\XX}{{\mathbb{X}}}
\newcommand{\YY}{{\mathbb{Y}}}
\newcommand{\WW}{{\mathbb{W}}}
\newcommand{\HH}{{\mathbb{H}}}
\renewcommand{\AA}{{\mathbb{A}}}
\newcommand{\II}{{\mathbb{I}}}
\newcommand{\RR}{{\mathbb{R}}}
\newcommand{\ZZ}{{\mathbb{Z}}}
\newcommand{\JJ}{{\mathbb{J}}}
\newcommand{\LL}{{\mathbb{L}}}
\newcommand{\HJ}{{\mathbb{HJ}}}
\newcommand{\DD}{{\mathbb{D}}}
\newcommand{\calO}{{\mathcal O}}
\newcommand{\calHJ}{{\mathcal {HJ}}}
\newcommand{\calI}{{\mathcal I}}
\newcommand{\calA}{{\mathcal A}}
\newcommand{\calJ}{{\mathcal J}}
\newcommand{\calE}{{\mathcal E}}
\newcommand{\calX}{{\mathcal X}}
\newcommand{\calD}{{\mathcal D}}
\newcommand{\calR}{{\mathcal R}}
\newcommand{\frakm}{{\mathfrak m}}
\newcommand{\frakM}{{\mathfrak M}}
\newcommand{\op}{\operatorname}
\newcommand{\Sp}{\op{Sp}}
\newcommand{\SL}{\op{SL}}
\newcommand{\Stab}{\op{Stab}}
\newcommand{\grad}{\op{grad}}
\newcommand\diag{\op{diag}}
\newcommand\codim{\op{codim}}
\newcommand\vt{\vartheta}
\theoremstyle{plain}
\newtheorem{thm}{Theorem}
\newtheorem*{mthm*}{Main Theorem} 
\newtheorem{lm}[thm]{Lemma}
\newtheorem{prop}[thm]{Proposition}
\newtheorem{cor}[thm]{Corollary}
\theoremstyle{definition}
\newenvironment{conv}
{\pushQED{\qed}\convx}
{\popQED\endconvx}
\newtheorem{rem}[thm]{Remark}
\begin{document}

\title[Near the diagonal period matrices]{Moduli of abelian varieties near the locus of products of elliptic curves}

\author[S. Grushevsky]{Samuel Grushevsky}
\address{Department of Mathematics and Simons Center for Geometry and Physics, Stony Brook University, Stony Brook, NY 11794-3651}
\email{sam@math.stonybrook.edu}

\author[R. Salvati Manni]{Riccardo Salvati Manni}
\address{Dipartimento di Matematica, Piazzale Aldo Moro, 2, I-00185 Roma, Italy}
\email{salvati@mat.uniroma1.it}

\thanks{Research of the first author is supported in part by NSF grant DMS-21-01631}

\begin{abstract}
We study various naturally defined subvarieties of the moduli space~$\calA_g$ of complex principally polarized abelian varieties (ppav) in a neighborhood of the locus of products of~$g$~elliptic curves. 

In this neighborhood, we obtain a local description for the locus of hyperelliptic curves, reproving the recent result of Shepherd-Barron~\cite{sbpoincare} that the hyperelliptic locus is locally given by tridiagonal matrices. We further reprove and generalize to arbitrary genus the recent result of Agostini and Chua~\cite{agch} showing that the locus of jacobians of genus 5 curves with a theta-null is an irreducible component of the locus of ppav with a theta-null such that the singular locus of the theta divisor at the corresponding two-torsion point has tangent cone of rank at most 3. We further show that the locus of ppav such that the gradient vanishes, for some odd theta characteristic, locally has codimension~$g$ near the diagonal. Finally, we obtain new results on the locus where the rank of the Hessian of the theta function at a two-torsion point that lies on the theta divisor is equal to 2. 
\end{abstract}

\date{\today}
\maketitle
\section*{Introduction}
Most known constructions of geometrically meaningful subvarieties of the moduli space $\calA_g$ of complex principally polarized abelian varieties (ppav) are either via the Jacobian or Albanese map, or by imposing certain conditions on the theta divisor and its singularities. Two most classical such constructions are of course the locus $\calJ_g^\circ$ of Jacobians of smooth genus $g$ curves, and the theta-null divisor $\vtn$ --- the locus of those ppav that have a vanishing theta constant, or equivalently for which the theta divisor contains an even two-torsion point. Geometrically, one can further consider Jacobians of hyperelliptic curves, intermediate Jacobians of cubic threefolds, etc. Working with the theta divisor, one can impose conditions on the dimension of its singular locus, existence of points or higher multiplicity, or on the local structure of singularities.

In this paper we present a unified approach to determining the local structure and some irreducible components of the subvarieties of~$\calA_g$ defined in these ways, which we apply in various situations. We thus reprove recent results of Shepherd-Barron~\cite{sbpoincare} characterizing the locus of hyperelliptic Jacobians locally near the locus of products of elliptic curves. We show that the locus of ppav with an (odd) two-torsion point of multiplicity three on the theta divisor is smooth, locally of codimension~$g$, as expected, near the diagonal. We further show that the locus of Jacobians with a vanishing theta null is an irreducible component of the locus of ppav with a theta-null such that the Hessian matrix of theta has rank 3 --- thus extending to arbitrary genus the results of Agostini and Chua~\cite{agch} in genus 5, and generalizing our work in genus 4. We further show that the locus of products with an elliptic curve is an irreducible component of the locus where the rank of the Hessian as above is equal to 2.

Our method is inspired by our recent work~\cite{fagrsm} with Hershel Farkas, where the geometric study in the neighborhood of the diagonal allowed us to give an explicit solution to the weak Schottky problem. Our approach consists of investigating the geometry of the various loci near the locus of diagonal period matrices, i.e.~geometrically near $\calA_1\times\dots\times \calA_1\subset\calA_g$, and using Taylor expansions of theta functions and infinitesimal geometry there.

\smallskip
We denote $\HH_g$ the Siegel upper half-space, denote $\calA_g$ the moduli space of ppav, and denote $p:\HH_g\to\calA_g$ the universal covering map, which is the quotient by the action of $\Sp(2g,\ZZ)$. We use 
$\calJ_g\subset\calA_g$ to denote the closure of the locus $\calJ_g^\circ$ of Jacobians of smooth genus $g$ curves, and denote by $\calHJ_g^\circ\subset \calHJ_g\subset\calJ_g$ respectively the locus of Jacobians of hyperelliptic genus $g$ curves and its closure. We denote $\HJ_g^\circ\subset\HJ_g\subset\JJ_g\subset\HH_g$ their respective preimages in the Siegel space, and denote $\HJ_g^\circ$ and $\JJ_g^\circ$ the open subsets of hyperelliptic Jacobians, and Jacobians, of smooth curves.

It is a classical result of Mumford~\cite{mumfordbooktheta2} that the geometrically defined locus $\HJ_g\subset\HH_g$ is defined from the point of view of the geometry of the theta divisor as the locus where a certain configuration of theta constants with characteristics vanishes (we'll review this below in detail). We denote $\calR_g:=(\calA_1\times\calA_{g-1})\cup(\calA_2\times\calA_{g-2})\cup\dots\subset\calA_g$ the locus of decomposable (classically called reducible) ppav --- this of course includes ppav that have more than two factors, which may lie in more than one component of the above union.

Finally, we denote $\calD_g:=\calA_1\times\dots\times\calA_1\subset\calR_g$ the locus of products of elliptic curves, and denote $\DD_g\subset\HH_g$ its preimage in the universal cover. One irreducible component of $\DD_g$ is the locus $\II_g:=\HH_1\times\dots\times \HH_1\subset\HH_g$ of diagonal period matrices. Recently, Shepherd-Barron~\cite{sbpoincare} determined the local structure of $\HJ_g$ near $\II_g$. Our first result is an alternative proof of this side result of his (the main thrust, and the main results of~\cite{sbpoincare}, are on elliptic surfaces, which are beyond the scope of our work):
\begin{thm}[{Shepherd-Barron \cite[Theorem 14.6]{sbpoincare}}] \label{thm:hypsmall}
For {\em every} irreducible component $\XX$ of $\HJ_g$ containing $\II_g\subset\HH_g$, to first order at any point of $\II_g$, $\XX\subset\HH_g$ is defined by the vanishing of the entries $\tau_{ij}$ of the period matrix $\tau$ where $(i,j)$ runs over the set of pairs that are {\em not} edges of the corresponding alkane. In particular, the branch corresponding to the linear alkane equals, to first order, the locus of tridiagonal matrices, i.e.~matrices with non-zero entries only on the main diagonal and on the two diagonals directly above and below it.
\end{thm}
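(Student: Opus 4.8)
The plan is to combine Mumford's theta-constant characterization of the hyperelliptic locus with a first-order Taylor analysis at the diagonal, the decisive feature being that the product structure of theta functions along $\II_g$ makes the linearization completely explicit. First I would recall Mumford's description: attached to the component $\XX$ there is a finite configuration $M_\XX$ of \emph{even} characteristics $m=\chars{\ep}{\de}$, determined by the branch-point combinatorics of the degeneration, such that $\XX$ is a component of $\{\theta_m=0:m\in M_\XX\}$ and $\theta_m\equiv 0$ on $\XX$ for every $m\in M_\XX$. Fix $\tau_0=\diag(\tau_1,\dots,\tau_g)\in\II_g$. Since $\II_g\subset\XX$, each $\theta_m$ vanishes at $\tau_0$; and because on the diagonal the theta function factors, $\theta_m(z,\tau_0)=\prod_k\theta\chars{\ep_k}{\de_k}(z_k,\tau_k)$, into genus-one theta functions, with a genus-one theta constant vanishing exactly for the unique odd characteristic $\chars{1/2}{1/2}$, the condition $\theta_m(\tau_0)=0$ forces $m$ to have at least one, hence (being even) at least two, odd genus-one components.

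The heart of the linearization is the heat equation $\partial_{\tau_{jk}}\theta=\tfrac{1}{2\pi i(1+\delta_{jk})}\,\partial_{z_j}\partial_{z_k}\theta$ together with the elementary genus-one facts that an even theta is even in $z$ (so $\theta(0)\ne0$, $\theta'(0)=0$) while an odd theta is odd in $z$ (so $\theta(0)=0$, $\theta'(0)\ne0$, $\theta''(0)=0$). Differentiating the product formula at $z=0$, a single derivative $\partial_{\tau_{jk}}$ touches only the $j$-th and $k$-th factors, so any characteristic with a number of odd components other than two leaves an odd (hence vanishing) genus-one factor untouched and gives $d\theta_m(\tau_0)=0$; the only surviving case is exactly two odd components, say in positions $i<j$, where
\[
\frac{\partial\theta_m}{\partial\tau_{ij}}(\tau_0)=\frac{1}{2\pi i}\,\theta'\chars{1/2}{1/2}(0,\tau_i)\,\theta'\chars{1/2}{1/2}(0,\tau_j)\!\!\prod_{k\ne i,j}\!\!\theta\chars{\ep_k}{\de_k}(0,\tau_k),
\]
all other first partials vanishing. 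Since the genus-one even theta constants and the odd theta derivative are \emph{nowhere} zero on $\HH_1$, this coefficient is nonzero at every point of $\II_g$, which is what lets the conclusion hold at any point with no exceptional loci. Writing $N_\XX$ for the set of pairs $\{i,j\}$ coming from the two-odd-component characteristics in $M_\XX$, this shows that, to first order, $\XX\subseteq\{\tau_{ij}=0:\{i,j\}\in N_\XX\}$.

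It remains to identify $N_\XX$ with the non-edges of the alkane, and this combinatorial matching is the step I expect to be the main obstacle. The degeneration attached to $\XX$ is a compact-type nodal curve with $g$ elliptic components whose dual graph, being connected with $g-1$ nodes, is a tree; smoothing the node joining the $i$-th and $j$-th factors turns on precisely the entry $\tau_{ij}$ to first order (the plumbing coefficient is a product of nonvanishing invariant differentials), so the $g-1$ edges together with the $g$ diagonal directions exhibit $2g-1=\dim\XX$ independent tangent vectors inside $\{\tau_{ij}=0:\{i,j\}\in N_\XX\}$. To close the argument one must prove the reverse inclusion $N_\XX=\{\text{non-edges}\}$, namely that the two-odd-component characteristic with odd positions $\{i,j\}$ lies in $M_\XX$ exactly when $i$ and $j$ are \emph{not} adjacent in the tree; this requires running Mumford's dictionary between even characteristics and subsets of the $2g+2$ branch points through the clustering of branch points dictated by the tree, and is where the real work lies. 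Granting it, $\{\tau_{ij}=0:\{i,j\}\in N_\XX\}$ has dimension $g+(g-1)=2g-1=\dim\XX$, so the first-order inclusion is an equality and $\XX$ is cut out to first order by the vanishing of $\tau_{ij}$ over non-edges. Finally, the linear alkane is the path with edges $\{i,i+1\}$, whose non-edges are the pairs with $|i-j|\ge2$, yielding exactly the tridiagonal locus and recovering the last assertion.
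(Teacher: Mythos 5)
Your linearization is correct and is essentially the paper's own computation: by the factorization of theta functions along $\II_g$ and the heat equation, an even characteristic contributes a nontrivial linear term at $\diag(t_1,\dots,t_g)$ exactly when it has precisely two columns equal to $\chars11$, say in positions $i<j$, and that term is a nowhere-vanishing multiple of $\tau_{ij}$ (this is \eqref{eq:thetaexpand}, specialized as in \eqref{eq:linearterm}); the dimension count you use to upgrade the first-order inclusion to an equality (the non-edge equations cut out a linear space of dimension $g+(g-1)=2g-1=\dim\XX$) is also the paper's.

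The genuine gap is exactly the step you flag and then assume: you never prove that for every non-edge $\{i,j\}$ of the tree there is a characteristic in $M_\XX$, vanishing identically on $\XX$, whose two odd columns are exactly $\{i,j\}$. Without this, your argument only yields that $\XX$ lies, to first order, in $\{\tau_{ij}=0:\{i,j\}\in N_\XX\}$ for an a priori unidentified set $N_\XX$, and establishes no connection with the alkane at all; so the theorem is not proved. The paper closes this gap concretely, with no appeal to branch-point clustering or plumbing. It first treats the single explicit component $\HJ_g^I$ attached to the special fundamental system \eqref{eq:specsystem}: for $1\le j_1<j_2<j_3\le g$ the characteristic $m=o_{j_1}+o_{j_2}+o_{j_3}=b^g+\sum_{i\ne j_1,j_2,j_3}o_i$ differs from $b^g$ by a sum of $g-3<g$ elements of the system, hence $\theta_m\equiv0$ on $\HJ_g^I$ by \Cref{prop:hyponecomponent}, and a direct computation shows its columns equal to $\chars11$ are exactly $j_1$ and $j_3$. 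The existence of an intermediate index $j_2$ is precisely the condition $j_3-j_1>1$, so these characteristics produce exactly the non-edges of the path, and the dimension count finishes this component. For an arbitrary component, instead of analyzing its degeneration, the paper invokes \Cref{lm:tsu} (resting on Tsuyumine's results): every irreducible component of $\HJ_g$ containing $\II_g$ equals $\sigma(\HJ_g^I)$ for some $\sigma\in Stab_{\II_g}$, and since the $\Gamma_1$-factors of $\sigma$ do not change the linear terms, the tangent space of $\XX$ is the relabeled tridiagonal locus under the image $\pi\in S_g$ of $\sigma$. This group-theoretic transport also answers a question your outline leaves open, namely what ``the corresponding alkane'' of an abstract component through $\II_g$ even is: it is always a relabeled path.
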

\begin{rem}\label{rem:levelhyp}
We have taken care to phrase the result above carefully on the Siegel space. Note that there is a delicate point here: while $\calHJ_g\subset\calA_g$ is an irreducible algebraic variety, its preimage $\HJ_g\subset\HH_g$ has many irreducible components, a number of which contain $\II_g$. The result above describes the tangent space at a point of $\II_g$ to {\em every} irreducible component of $\HJ_g$ containing~$\II_g$.
\end{rem}
Recall that $\vtn\subset\calA_g$ denotes the locus where some even theta constant~$\tc\ep\de(\tau,0)$ vanishes. Following~\cite{grsmordertwo}, we denote $\vtn^k\subset\vtn$ the locus where the rank of the corresponding Hessian matrix $(\partial_{z_a}\partial_{z_b}\tc\ep\de(\tau,z))|_{z=0}$ is at most $k$. Since the theta function for a block-diagonal period matrix factorizes as the product of theta functions, one immediately sees that $\calA_{g_1}\times\calA_{g_2}\subset\vtn^2$, for any $g_1+g_2=g$ with $g_1,g_2>0$ (see also the explicit expansions in~\Cref{sec:expand}). This is to say that $\calR_g\subset\vtn^2$, and we prove that at least the largest irreducible component of $\calR_g$ is also an irreducible component of $\vtn^2$.
\begin{thm}\label{thm:tn2}
The locus $\calA_1\times\calA_{g-1}$ is an irreducible component of $\vtn^2$.
\end{thm}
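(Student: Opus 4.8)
The plan is to show that at a \emph{general} point $\tau_0$ of $\calA_1\times\calA_{g-1}$ the local dimension of $\vtn^2$ equals $\dim(\calA_1\times\calA_{g-1})=1+\binom{g}{2}$; since the inclusion $\calA_1\times\calA_{g-1}\subseteq\vtn^2$ is already known, this is exactly what is needed to conclude that $\calA_1\times\calA_{g-1}$ is an irreducible component. Note that $\codim(\calA_1\times\calA_{g-1})=g-1$, while the expected codimension of $\vtn^2$ is $1+\binom{g-1}{2}$, which exceeds $g-1$ for $g\ge 4$; thus $\calA_1\times\calA_{g-1}$ is an \emph{excess} component and the entire content is an upper bound on $\dim_{\tau_0}\vtn^2$. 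I work on $\HH_g$ and take $\tau_0=\left(\begin{smallmatrix}\tau_{11}&0\\ 0&\tau'\end{smallmatrix}\right)$ with $\tau_{11}\in\HH_1$ and $\tau'\in\HH_{g-1}$ general; the local branch of $\calA_1\times\calA_{g-1}$ through $\tau_0$ is the linear space $\{\tau_{1b}=0:\ b=2,\dots,g\}$, so the $g-1$ transverse coordinates are the $\dot\tau_{1b}$, which I assemble into a vector $\vec\phi\in\CC^{g-1}$.

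Near $\tau_0$ the locus $\vtn^2$ is the union of the loci $V_m:=\{\tc\ep\de(\tau,0)=0,\ \rank\le 2\}$ over the even characteristics $m=\chars\ep\de$ that vanish at $\tau_0$. Because $\tau'$ is general it carries no vanishing even theta constant, so these $m$ are precisely the ones whose genus-one factor $\tc11(\tau_{11},\cdot)$ is odd and whose genus-$(g-1)$ part $\Theta:=\tc{\ep'}{\de'}(\tau',\cdot)$ is an odd characteristic. A direct factorization computes the Hessian at $\tau_0$ as $u\,(e_1w^{T}+we_1^{T})$, of rank exactly $2$, where $u=\partial_{z_1}\tc11(\tau_{11},0)\neq 0$ and $w=(\partial_{z_b}\Theta(0))_{b\ge 2}\neq 0$; hence $\ker H=\{x:\ x_1=0,\ w\cdot x=0\}$. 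Since $\rank$ is lower semicontinuous, near $\tau_0$ the condition $\rank\le 2$ is $\rank=2$, whose tangent space at a rank-two symmetric matrix is $\{\dot H:\ \dot H|_{\ker H\times\ker H}=0\}$. Converting $\tau$-derivatives into $z$-derivatives by the heat equation $\partial_{\tau_{ij}}\theta=\tfrac{1}{2\pi i(1+\delta_{ij})}\partial_{z_i}\partial_{z_j}\theta$ (so that the relevant fourth derivatives factor as $u$ times third derivatives of $\Theta$), I find that $\dot\tau$ is tangent to $V_m$ iff its within-block part is arbitrary (these are the directions along $\calA_1\times\calA_{g-1}$) and the transverse vector $\vec\phi$ satisfies (a) $w\cdot\vec\phi=0$, coming from differentiating $\tc\ep\de(\tau,0)=0$, and (b) $T(\vec\phi,\cdot,\cdot)|_{w^\perp}=0$, coming from the linearized rank condition, where $T=(\partial_{z_a}\partial_{z_b}\partial_{z_c}\Theta(0))$ is the symmetric cubic tensor of $\Theta$ at the odd two-torsion point.

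The key claim is that for general $\tau'$ the only solution of (a)--(b) is $\vec\phi=0$; equivalently, the linear map $L\colon w^\perp\to\Sym^2(w^\perp)^{\ast}$, $\vec\phi\mapsto T(\vec\phi,\cdot,\cdot)|_{w^\perp}$, is injective, i.e.\ the cubic form $T(z,z,z)$ restricted to the tangent hyperplane $w^\perp$ of the theta divisor at the odd point cuts out a projective hypersurface that is \emph{not a cone}. Granting this, each $T_{\tau_0}V_m$ coincides with the tangent space of $\calA_1\times\calA_{g-1}$; as $\calA_1\times\calA_{g-1}\subseteq V_m$ and is smooth, every $V_m$ equals $\calA_1\times\calA_{g-1}$ as a germ at $\tau_0$, and the same holds for the finitely many odd characteristics $\Theta$ and for the symmetric choices of which elliptic factor is split off. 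Hence $\vtn^2$ coincides with $\calA_1\times\calA_{g-1}$ near $\tau_0$, and the theorem follows.

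The hard part is the injectivity of $L$, and the essential difficulty is that it \emph{cannot} be checked by degenerating $\tau'$ to a diagonal (or otherwise decomposable) matrix: if $A'$ decomposes then $w$ becomes a coordinate vector, $w^\perp$ a coordinate hyperplane, and every $\vec\phi$-contraction of $T$ turns out to be divisible by the missing coordinate, so it vanishes identically on $w^\perp$ and the solution space jumps up. This is exactly the shadow of the fact that many branches of $\vtn^2$ meet over $\II_g$, making the tangent space there genuinely too large, so the computation must be carried out at a general, indecomposable $\tau'$. I would establish injectivity by exhibiting a single ppav $A'_0$ --- for instance a general Jacobian, where $T$ at an odd two-torsion point is governed by the projective geometry of the canonical curve --- for which the restricted cubic is not a cone, and then concluding for general $\tau'$ by upper semicontinuity of $\dim\ker L$; alternatively one bounds the dimension of the incidence variety $\{(\tau',[\vec\phi]):\ \vec\phi\in w^\perp,\ L(\vec\phi)=0\}$ and shows it does not dominate $\calA_{g-1}$. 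Verifying the non-cone property for this one explicit $A'_0$ is the crux of the whole argument.
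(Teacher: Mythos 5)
Your reduction is sound as far as it goes: at a general point $\tau_0=\diag(\tau_{11},\tau')\in\HH_1\times\HH_{g-1}$ the only even characteristics vanishing there are the odd$\otimes$odd ones, the Hessian at such a point is $u\,(e_1w^{T}+we_1^{T})$ of rank exactly $2$, the rank-$\le 2$ locus is smooth at such a matrix, and the first-order conditions on a transverse direction $\vec\phi$ are indeed your (a) $w\cdot\vec\phi=0$ and (b) $T(\vec\phi,\cdot,\cdot)|_{w^\perp}=0$; granting injectivity of $L\colon w^\perp\to\Sym^2(w^\perp)^{*}$, the conclusion does follow. But the proposal is not a proof: it is a (correct) reduction of the theorem to the unproven claim that for general $\tau'$ the cubic $T|_{w^\perp}$ is not a cone, which you yourself call ``the crux of the whole argument.'' Neither of your two suggested routes is carried out --- you exhibit no explicit indecomposable $A'_0$ for which the non-cone property is verified, and you give no dimension count for the incidence variety. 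Worse, as you correctly observe, the one place where theta expansions are readily computable (decomposable or diagonal $\tau'$) is exactly where the claim \emph{fails}, since there $\ker L$ jumps; so semicontinuity cannot be launched from any easily computable point, and establishing the claim would require genuine control of the third-order jet of a theta function at an odd two-torsion point of an indecomposable ppav (for a Jacobian, of the canonical curve's geometry) --- a problem of roughly the same order of difficulty as the theorem itself.

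It is instructive to contrast this with the paper's route, which sidesteps precisely this obstruction. The paper performs its computation at the opposite extreme, near the diagonal $\II_g$, where every theta function factors into elliptic ones and all coefficients are explicit one-variable theta constants. There the first-order data is indeed degenerate --- many branches of the rank loci meet along $\II_g$, the phenomenon you identify --- and the paper compensates by using \emph{second-order} terms of the Taylor expansion rather than tangent spaces alone: \Cref{lm:cutexpand} bounds local dimension using the defining ideal modulo any power of the maximal ideal, and intersecting with the explicit slice $Z=\{\tau_{jk}=0\ \forall\, 2\le j<k\le g,\ \tau_{1j}=\tau_{2j}\ \forall\, 3\le j\le g\}$ reduces everything to the explicit computation that $\tnmz^2\cap Z$ is locally just $\II_g$, via $3\times 3$ minors of the expanded Hessian. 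Arbitrary characteristics are then handled by the group action (\Cref{cor:maxcomponent}) rather than by genericity of the base point. In short: you trade an explicit higher-order computation at a special point for a genericity statement at a general point, but that genericity statement is left unproved, and it is where the real content lies.
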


From Riemann's theta singularity theorem for Jacobians of curves one deduces the inclusion $\calJ_g\cap\vtn\subset\vtn^3$. Our next result is an alternative proof and a generalization to arbitrary genus of the recent result of Agostini and Chua~\cite{agch}. They prove that in genus 5 there exists an irreducible component of $\JJ_5\cap p^{-1}(\vtn)$ that is also an irreducible component of $p^{-1}(\vtn^3)$ (while we recall that in genus 4 the equality $\calJ_4\cap\vtn=\vtn^3$ was conjectured by H. Farkas~\cite{farkashvanishing} and proven by us in~\cite{grsmgen4}).
\begin{thm}\label{thm:tn3} 
For any genus $g\geq 3$, $\calJ_g\cap \vtn$ is an irreducible component of $\vtn^3$ .
\end{thm}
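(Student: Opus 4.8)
The plan is to argue locally near a point $\tau_0\in\II_g$, exactly as in the proofs of \Cref{thm:hypsmall} and \Cref{thm:tn2}, exploiting the chain of inclusions $\calD_g\subset\calJ_g\cap\vtn\subset\vtn^3$ so that $\tau_0$ is a point of all three loci. Fix an even characteristic $\chars\ep\de$ whose one-dimensional components are odd in exactly two slots, say the first two, and even in the remaining $g-2$; for a diagonal period matrix the corresponding theta constant vanishes because two genus-one odd theta constants occur as factors, so $\tau_0$ lies on this branch of $\vtn$. The first step is to write out, using the Taylor/graph expansion of \Cref{sec:expand}, the two conditions cutting out $\vtn^3$ near $\tau_0$: the vanishing of $\tc\ep\de(\tau,0)$, and the vanishing of all $4\times 4$ minors of the Hessian $H(\tau)=\bigl(\partial_{z_a}\partial_{z_b}\tc\ep\de(\tau,z)\bigr)|_{z=0}$.

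The expansion makes both conditions explicit in the off-diagonal entries $\tau_{ij}$. To lowest order $\tc\ep\de(\tau,0)$ is a nonzero multiple of $\tau_{12}$, so $\vtn$ is smooth at $\tau_0$ with tangent hyperplane $\{\tau_{12}=0\}$, and along this branch $\tau_{12}$ is a power series (starting in degree two, essentially $\sum_{k\ge 3}\tau_{1k}\tau_{2k}$) in the remaining entries. For the rank condition I would use that the $O\times O$ block of $H$, with $O=\{1,2\}$, is invertible (its leading term is antidiagonal), and pass to the Schur complement $S=H_{EE}-H_{OE}^{t}H_{OO}^{-1}H_{OE}$, a symmetric $(g-2)\times(g-2)$ matrix indexed by $E=\{3,\dots,g\}$; then $\rank H\le 3$ is equivalent to $\rank S\le 1$. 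The expansion shows that the leading (quadratic) part of $S$ is assembled from the entries $\tau_{1k},\tau_{2k}$ alone, so the study of $\rank S\le 1$ becomes a determinantal problem in the two vectors $\mathbf{u}=(\tau_{1k})_{k\in E}$ and $\mathbf{v}=(\tau_{2k})_{k\in E}$.

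In parallel I would determine the tangent cone to $\calJ_g\cap\vtn$ at $\tau_0$. Here $\tau_0$ is the Jacobian of a compact-type curve that is a tree of $g$ elliptic curves, and smoothing its $g-1$ nodes governs the local geometry of $\calJ_g$; as in \cite{fagrsm}, the off-diagonal entries indexed by the edges of the tree are the independent smoothing parameters, while the remaining entries are determined, to leading order, by multiplicativity along the paths of the tree. I would show that, after imposing $\{\tau_{12}=0\}$ and matching against the Schur-complement description above, this yields an irreducible piece of the tangent cone of $\vtn^3$ of dimension $3g-4=\dim(\calJ_g\cap\vtn)$. Combined with the irreducibility of $\calJ_g\cap\vtn$ (the theta-null divisor on $\calM_g$ is irreducible, the monodromy acting transitively on even theta characteristics) and the inclusion $\calJ_g\cap\vtn\subset\vtn^3$, this would identify $\calJ_g\cap\vtn$ with a full irreducible component.

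The hard part will be that $\rank S\le 1$ is genuinely not a single-order condition: already on the leading locus $\mathbf{u}\parallel\mathbf{v}$ the quadratic part of $S$ drops rank, so I must carry the expansion of $S$ to higher order to reveal the further equations --- now involving the entries $\tau_{kl}$ with $k,l\in E$ --- that cut the tangent cone down to the expected dimension and that reproduce the tree-multiplicativity defining $\calJ_g$. Controlling these higher-order terms, and proving that the Jacobian stratum is a \emph{maximal} irreducible component of the resulting variety rather than sitting inside a spurious larger one, is the crux; I expect to handle it by induction on $g$, peeling off a leaf of the tree (a slot in which $\tau_{1k}$ or $\tau_{2k}$ vanishes identically) so that a lower-genus instance of the same determinantal problem reappears, with the small base cases $g=3,4$ checked directly.
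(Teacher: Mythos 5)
Your setup is the same as the paper's: you work near $\II_g$ with a characteristic $m_0\in\calE_2$, eliminate $\tau_{12}$ through the vanishing of $\theta_{m_0}$, and reduce the condition $\rank H\le 3$ past the invertible $2\times 2$ block (your Schur complement $S$ is equivalent to the paper's use of the $3\times3$ and $4\times 4$ minors). But the actual content of the theorem --- the bound $\dim\le 3g-4$ for every branch of $\tnmz^3$ through $\II_g$ not contained in $\RR_g$ --- is exactly what you defer as ``the crux,'' and your sketch of its leading structure is incorrect. In $S=H_{EE}-H_{EO}H_{OO}^{-1}H_{OE}$ the symmetrized outer-product terms $\phi_j\phi_k(\tau_{1j}\tau_{2k}+\tau_{2j}\tau_{1k})$ appearing in $H_{EE}$ are \emph{exactly cancelled} by the correction term (to leading order $H_{OO}^{-1}$ is antidiagonal), so the leading part of $S$ is not a determinantal problem in $\mathbf{u}=(\tau_{1k})$, $\mathbf{v}=(\tau_{2k})$ degenerating along $\mathbf{u}\parallel\mathbf{v}$; that picture describes $H_{EE}$, not $S$. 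What survives is $S_{jj}=(\tfrac{\psi_j}{2}-2\phi_j^2)\tau_{1j}\tau_{2j}+\dots$, involving the fourth derivatives $\psi_j$ and hence invisible to any bilinear expression in $\mathbf{u},\mathbf{v}$, together with $S_{jk}=\phi_j\phi_k\,\tau_{12}\tau_{jk}+\dots$. This is precisely the structure the paper's proof of \Cref{thm:ell=2} runs on: on the slice $Z=\{\tau_{1j}=\tau_{2j}\}$ the principal minors $D_{jj}$ force $\tau_{1j}=O(\ep^2)$, and then the mixed minors $D_{jk}$, which are linear in $\tau_{jk}$ with coefficient proportional to $\tau_{12}\tau_{13}^2\neq 0$, determine every $\tau_{jk}$; counting the surviving coordinates via \Cref{lm:cutexpand} gives $2g-2$ on $Z$, hence $3g-4$. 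Nothing in your outline produces these equations, and the paper's argument is a direct computation, not an induction on $g$; your proposed ``peeling off a leaf'' scheme (and the accompanying tangent-cone analysis of $\calJ_g$ via tree-multiplicativity, which the paper never needs --- it only uses the dimension $3g-4$ and Teixidor's irreducibility of $\calJ_g\cap\vtn$) is not a substitute.

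Two further gaps. First, the ``spurious larger component'' you worry about is not spurious: $\vtn^3\supset\vtn^2\supset\calR_g$ genuinely has irreducible components through $\calD_g$ of dimension larger than $3g-4$ (for instance a component containing $\calA_1\times\calA_{g-1}$, of dimension $1+g(g-1)/2>3g-4$ once $g\ge 6$), so exhibiting a $(3g-4)$-dimensional piece of the tangent cone can never suffice; one must restrict to points where the Hessian rank is exactly $3$ (equivalently, prove the bound only for $\tnmz^3\setminus\RR_g$), which the paper does by producing a nondegenerate principal $3\times 3$ minor, and your proposal never separates the decomposable branches. Second, all of your analysis takes place on the single sheet $\tnmz\subset\HH_g$; to conclude that $\calJ_g\cap\vtn$ is a component of $\vtn^3\subset\calA_g$ you must also rule out that its preimage is swallowed by the image of a component of $\tnm^3$ for some $m\in\calE_\ell$ with $\ell\ge 4$, whose expansion along $\II_g$ begins at order $\ell/2$ and cannot be controlled by your leading-order computation. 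The paper devotes \Cref{lm:orbita}, \Cref{prop:maxcomponent} and \Cref{cor:maxcomponent} (the big diagonal $\LL_g^e$ and the enlarged group $G_g$) to exactly this descent from $\HH_g$ to $\calA_g$; your proposal is silent on it. (For this particular theorem one could instead argue that a generic Jacobian with a vanishing theta-null has a unique one, so the relevant component of $p^{-1}(\calJ_g\cap\vtn)$ lies in $\tnmz$ alone and preimage components have the same dimension as their images --- but some such argument must be supplied.)
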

Finally, we recall that the theta-null divisor has a natural ``odd'' counterpart $\vgn\subset\calA_g$, which is the locus of all ppav such that the gradient $(\partial_{z_i}\tc\ep\de(\tau,z))|_{z=0}$ vanishes, for some odd theta characteristic $\chars\ep\de$. It was conjectured in~\cite{grsmconjectures} that the locus $\vgn$ is purely of codimension $g$ in $\calA_g$. In~\cite{grhu1} this conjecture was proven completely for all~$g\le 5$. We now prove that this also holds for every component intersecting the diagonal or containing the hyperelliptic locus:
\begin{thm}\label{thm:Gg} 
The locus $\vtn\times\calA_1\,\subset\, \calA_{g-1}\times\calA_1 $ is an irreducible component of $\vgn$.

Moreover, any irreducible component of the locus $\vgn$ containing the diagonal is locally smooth along the diagonal, and has codimension $g$ in $\calA_g$.
\end{thm}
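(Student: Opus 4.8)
The plan is to study both statements through the Taylor expansion of the gradient of $\tc\ep\de(\tau,z)$ at $z=0$ in the directions transverse to the diagonal, using that on a block-diagonal period matrix $\tc\ep\de$ factorizes as a product of lower-genus theta functions, and that the heat equation turns an off-diagonal derivative $\partial_{\tau_{jk}}$ into $\frac1{2\pi i}\partial_{z_j}\partial_{z_k}$. The two assertions are governed by the number $m$ of factors on which the odd characteristic $\chars\ep\de$ is odd: the component $\vtn\times\calA_1$ comes from $m=1$, while every component containing all of $\II_g$ comes from $m\geq 3$, since for $m=1$ the gradient is generically nonzero along $\II_g$ and that $m=1$ regime is exactly the one responsible for the first assertion.

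For the first assertion, I would begin with the inclusion $\vtn\times\calA_1\subseteq\vgn$. Writing $\tau=\tau'\oplus\tau''$ with $\tau'\in\HH_{g-1}$, $\tau''\in\HH_1$ and taking the odd characteristic $\chars{\ep'1}{\de'1}$ with $\chars{\ep'}{\de'}$ even, the factorization $\tc{\ep'1}{\de'1}(\tau,z)=\tc{\ep'}{\de'}(\tau',z')\,\tc11(\tau'',z'')$ makes all $z'$-components of the gradient vanish automatically (because $\tc11(\tau'',0)=0$), while the $z''$-component is $\tc{\ep'}{\de'}(\tau',0)\,\partial_{z''}\tc11(\tau'',0)$, which vanishes precisely when $\tau'\in\vtn$. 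To promote this to a whole component, I would work at a generic point $A\times E$ and check that $\chars{\ep'1}{\de'1}$ is the \emph{only} odd characteristic whose gradient vanishes there: a characteristic odd on the $(g-1)$-block would force $\tau'$ into the genus $(g-1)$ gradient locus, which has codimension $g-1>1$ and hence cannot contain the generic point of the divisor $\vtn$. Finally I would compute, via the heat equation, the differentials of the $g$ gradient components at $A\times E$ in the coordinates given by the cross-entries $\tau_{ig}$ ($i<g$) and the $\calA_{g-1}$-directions: the $g-1$ block-components are, to first order, $\partial_{z''}\tc11(\tau'',0)$ times the image of the vector $(\tau_{ig})_i$ under the Hessian $\bigl(\partial_{z_i}\partial_{z_j}\tc{\ep'}{\de'}(\tau',0)\bigr)$, while the last component is the differential of the theta-null. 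Linear independence of these $g$ differentials, hence smoothness and $\codim=g$, then reduces to the nondegeneracy of that Hessian at a generic point of $\vtn$, i.e.\ to $\vtn\not\subseteq\vtn^{g-2}$ in genus $g-1$; this is the one external input I would invoke.

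For the second assertion, I would fix an odd characteristic with odd set $S$, $|S|=m\geq 3$, and Taylor-expand each component $G_i:=\partial_{z_i}\tc\ep\de(\tau,z)|_{z=0}$ in the off-diagonal coordinates $\tau_{jk}$ near a generic point of $\II_g$. Converting $\tau$-derivatives to $z$-derivatives and using the factorization, the parities of the one-variable factors ($\tc11$ odd, the even ones even) force the leading term of $G_i$ to be the matching-sum (a sub-Hafnian) of the block $(\tau_{jk})_{j,k\in S\setminus\{i\}}$, of degree $(m-1)/2$, when $i\in S$, and of the block $(\tau_{jk})_{j,k\in S\cup\{i\}}$, of degree $(m+1)/2$, when $i\notin S$. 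In particular the $m$ equations indexed by $i\in S$ involve only the off-diagonal entries internal to $S$.

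The heart of the matter, and the step I expect to be the main obstacle, is to pass from these leading forms to the genuine branch structure and to establish codimension exactly $g$ with smoothness. Taking the vanishing of the leading forms at face value gives too small a codimension: already for $m=3$ the three $S$-equations force the entire $S$-block to vanish, after which every remaining equation vanishes identically to leading order. The correct codimension appears only after substituting the solved $S$-block entries back into the equations indexed by $i\notin S$, at which point $G_i$ acquires a factored leading term—for $m=3$ of the shape $\prod_{s\in S}\tau_{si}$—so that a branch is determined by choosing, for each $i\notin S$, which single cross-entry $\tau_{si}$ vanishes. Each such branch is cut to first order by the vanishing of exactly $g$ off-diagonal entries, the remaining entries being free parameters over which the vanishing ones are solved by the implicit function theorem; this simultaneously yields smoothness and $\codim=g$. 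I would organize the argument as: (i) solve the $S$-block system, reducing to equations in the cross- and remaining entries; (ii) show these reduced equations factor into monomials, indexing the branches; (iii) verify that each branch forces precisely $g$ vanishings and is a graph. Carrying (i)--(iii) out uniformly in $m$—where for $m\geq 5$ the sub-Hafnian geometry of the $S$-block is itself nontrivial and delicate cancellations among higher-order terms must be controlled—is the genuinely hard part. A secondary difficulty is to confirm that each component stays smooth at the \emph{non-generic} points of $\II_g$, where further characteristics acquire a vanishing gradient and several branches meet.
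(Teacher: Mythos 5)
Your route for the second assertion has a genuine gap, and it sits exactly where the paper's one essential new idea does the work. You propose to expand $\grad\theta_m$ near $\II_g$ for an arbitrary odd characteristic with odd set $S$, $|S|=\ell\ge3$, eliminate the $S$-block, and read off smooth codimension-$g$ branches from factored leading terms; you then concede that carrying this out ``uniformly in $m$'' for $\ell\ge5$ is ``the genuinely hard part''. That concession is the gap: for $\ell\ge 5$ the leading forms of the equations indexed by $i\in S$ are brackets (sub-Hafnians) of degree $(\ell-1)/2\ge 2$, so the $S$-block is no longer solved as a graph over the remaining variables, $\II_g$ sits inside $\gnm$ with high multiplicity, and no analogue of your monomial factorization is established. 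Even for $\ell=3$ --- where your factored leading term is in fact correct, with coefficient $\psi_j-2\phi_j^2$ matching the paper's computation --- vanishing of initial forms only constrains the tangent cone, and one still needs a device such as \Cref{lm:cutexpand} applied to a transverse linear slice (this is precisely the Proposition in \Cref{sec:gn}) to convert the leading-order picture into an actual dimension bound. The paper never performs any computation for $\ell\ge5$: by \Cref{lm:orbita} the group $G_g$ acts transitively on odd characteristics while preserving the big diagonal $\LL_g^o$, and \Cref{prop:maxcomponent} and \Cref{cor:maxcomponent} then conjugate the relevant components of $\gnm$, $m\in\calO^*$, to components of $\gnmz$ with $m_0\in\calO_3$, so that only the $\calO_3$ expansion is ever needed. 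This reduction via the big diagonal, or a completed uniform-in-$\ell$ analysis which you do not supply, is indispensable; without it the second assertion, including its smoothness claim, remains unproven.

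For the first assertion your argument is a genuinely different and viable route: the paper observes that $\HH_1\times\theta_{{\rm n}_0,\,\rm null}$ is irreducible (by \cite{mosm}) of codimension $g$, hence an irreducible component of $\gnmz$, and descends to $\calA_g$ via \Cref{cor:maxcomponent}, whereas you work at a generic point of $\vtn\times\calA_1$ and use a Jacobian-criterion computation, which simultaneously yields the component property and the descent; that computation is correct. However, two of your inputs need repair. Your uniqueness step (that only one odd characteristic has vanishing gradient at a generic point) is justified by asserting that the gradient locus in genus $g-1$ has codimension $g-1$: this is precisely the conjecture of \cite{grsmconjectures}, open for $g-1\ge6$ and known only for components meeting the partial boundary \cite{grsmordertwo} or for genus at most $5$ \cite{grhu1}, so as written you are invoking a stronger unproven statement. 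What you actually need is only that no locus $\gn\chars{\ep'}{\de'}$ in genus $g-1$ contains the divisor $\theta_{{\rm n}_0,\,\rm null}$; this is true but requires an argument --- for instance, a codimension-one component of the gradient locus would be the zero divisor of a modular form, hence would meet the partial boundary, contradicting \cite{grsmordertwo}, or one can exhibit a single Jacobian with a theta-null and no odd theta characteristic with three sections and invoke irreducibility of the theta-null divisor. Likewise your ``external input'' $\vtn\not\subseteq\vtn^{g-2}$ in genus $g-1$ is true but should be proved, e.g.\ from the Hessian expansion \eqref{hess2} at a generic point of $\tnm$ near the diagonal. With these repairs your first-assertion argument stands as a more local alternative to the paper's.
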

\begin{rem}
We note that the first part of the above theorem is the $k=1$ case of~\cite[Thm.~6]{grsmconjectures}.

The dimensionality in the second statement can in fact be reduced to the argument in~\cite{grsmconjectures}, which was proven by a detailed degeneration argument. Indeed, \cite[Prop.~12]{grsmconjectures} describes the boundary of $\vgn$ in the partial toroidal compactification, which turns out to be described geometrically as a union of two components which involve respectively the singular locus of the universal theta divisor in genus~$g-1$ (which always has expected dimension), and of the locus $\vgn$ in genus~$g-1$. In~\cite[Thm.~13]{grsmconjectures} this is used to deduced that the codimension of $\vgn$ is precisely $g$ if all of its components, and all components of such loci for lower genera, intersect the boundary of the partial toroidal compactification. However, what is really used in the proof of that theorem is that for a given irreducible component of $\vgn$, it intersects the partial toroidal boundary, and if the intersection involves $\vgn$ dimension in one less, then that also intersects the partial toroidal boundary, etc. 

Since the diagonal $\calD_g\subset\calA_g$ clearly intersects the generic boundary stratum $\calA_{g-1}\subset\partial\calA_g^{\op{Sat}}$ of the Satake compactification, just by sending one of the~$t_i\in\HH_1$ to $i\infty=\partial\calA_1$ constructs such a degeneration, this means that it also intersects the partial toroidal boundary. Moreover its intersection with partial toroidal boundary will again involve the (preimage in the universal family) of the diagonal $\calD_{g-1}$, which will again intersect the partial toroidal boundary, and thus the inductive proof of~\cite[Thm.~13]{grsmconjectures} applies to any irreducible component of $\vgn$ containing $\calD_g$. The local smoothness statement of the theorem above is new.

We note that it appears much harder to try to apply such degeneration arguments for the study of the Hessian rank loci in \Cref{thm:tn2} and \Cref{thm:tn3}. While the degeneration of the derivatives of theta constants to the boundary of the partial compactification is well-known, the Hessian matrix would involve second order derivatives of the theta constants in genus~$g-1$, but also first order derivatives of theta functions in genus~$g-1$ evaluated at the point of the abelian variety that gives the semiabelian extension data, and thus the rank condition appears much harder to work with by induction in genus.
\end{rem}

\smallskip
In~\Cref{sec:notat} we recall the basic notions about theta functions and the action of the symplectic group on the set of theta characteristics. In~\Cref{sec:characterize} we recall the classical characterizations of the hyperelliptic and decomposable loci $\calHJ_g$ and $\calR_g$ within $\calA_g$ in terms of vanishing of theta constants. In~\Cref{sec:expand} we set up convenient notation for writing down the expansions of the theta function and its derivatives near the locus $\II_g$ of diagonal period matrices. One new technical result that we prove is the description of the action of the stabilizer group of the theta function with characteristic $m$ on the set of irreducible components of $\DD_g$. In~\Cref{sec:hyp} we reprove Shepherd-Barron's result on the infinitesimal structure of $\calHJ_g$ near $\calD_g$. In~\Cref{sec:gn} we prove~\Cref{thm:Gg} on the locus~$\vgn$. Finally, in~\Cref{sec:tn} we prove Theorems~\ref{thm:tn2} and~\ref{thm:tn3} on the Hessian rank loci $\vtn^2$ and $\vtn^3$.

\subsection*{Acknowledgments}
We are grateful to Daniele Agostini, Lynn Chua, and Nick Shepherd-Barron for sharing with us their preprints \cite{agch} and \cite{sbpoincare}, respectively, and their interesting ideas, and thus reigniting our investigation of this subject. We are indebted to Hershel Farkas, a collaboration with whom on~\cite{fagrsm} led us to investigate and appreciate the importance of expansions of theta functions near the diagonal. The second author is grateful to Enrico Arbarello and Edoardo Sernesi for valuable conversations and correspondence in the past years.

\section{Notation: theta functions and level covers}\label{sec:notat}
We denote by $\HH_g:=\{\tau\in\op{Mat}_{g\times g}(\CC)\mid \tau=\tau^t, \op{Im}\tau>0\}$ the Siegel upper half-space of complex symmetric matrices with positive definitive imaginary part. It is a homogeneous space for the action of $\Sp(2g,\RR)$, where an element
$$
 \sigma=\left(\begin{matrix} A & B \\ C & D\end{matrix}\right) \in Sp(2g,\RR)
$$
acts via
$$
\sigma\cdot \tau:=(A\tau+B)(C\tau+D)^{-1}\,.
$$
We denote by $\Gamma_g:=\Sp(2g,\ZZ)$ the Siegel modular group, and let $\Gamma_g(n):=\{\sigma\in\Gamma_g:\sigma\equiv 1_{2g}\mod n\}$ (where from now on we denote by $1_k$ the $k\times k$ identity matrix) denote the principal congruence subgroup of $\Gamma_g$. The quotient $\calA_g=\HH_g/\Gamma_g$ is the moduli space of complex principally polarized abelian varieties (ppav), and $\calA_g(n)=\HH_g/\Gamma_g(n)$ is the moduli space of ppav with a choice of a full symplectic level~$n$ structure. Recall that $\calJ_g$ and $\calHJ_g$ denote the closures in~$\calA_g$ of the loci of Jacobians and of hyperelliptic Jacobians, respectively.

We denote by $p:\HH_g\to\calA_g$ and $p_n:\HH_g\to\calA_g(n)$ the quotient maps, and by abuse of notation will also denote the same way their restrictions to various submanifolds such as $\JJ_g:=p^{-1}(\calJ_g)$ or $\HJ_g:=p^{-1}(\calHJ_g)$. For a subvariety $\calX\subset\calA_g$, we will also write $\calX(n)$ to denote its preimage on a level cover: $\calX(n):=p_n(p^{-1}(\calX))\subset\calA_g(n)$. Very importantly, we note that since $p:\HH_g\to\calA_g$ is a Galois cover, for any irreducible subvariety $\calX\subset\calA_g$, for any two irreducible components $\XX'$ and $\XX''$ of $p^{-1}(\calX)\subset\HH_g$, there must exist an element $\gamma\in\Gamma_g$ mapping $\XX'$ to $\XX''$.

We call a ppav decomposable if it is isomorphic to a product of two lower-dimensional ppav. Analytically, $\tau$ is decomposable if and only if there exists $\sigma\in\Gamma_g$, such that
$$
 \sigma\cdot\tau=\left(\begin{smallmatrix} \tau_1 & 0 \\
0& \tau_2
\\
\end{smallmatrix}\right), \quad {\rm with }\, \tau_i\in \HH_{g_i},\,\, g=g_1+g_2,\,\, g_1,g_2>0\,. $$
(classically, such ppav are called reducible). We denote by 
$$
 \calR_g:=(\calA_1\times\calA_{g-1})\cup(\calA_2\times\calA_{g-2})\cup\dots\subset\calA_g
$$ 
the locus of decomposable ppav, and denote $\RR_g:=p^{-1}(\calR_g)\subset\HH_g$ its preimage in the Siegel space. We recall that $\calD_g=\calA_1\times\dots\times\calA_1\subset\calA_g$ denotes the locus of products of elliptic curves, and $\DD_g:=p^{-1}(\calD_g)\subset\HH_g$ denotes its preimage, of which $\II_g$ is an irreducible component. We thus have $\calD_g\subset\calHJ_g\subset\calJ_g\subset\calA_g$ and $\calD_g\subset\calR_g\subset\calA_g$.

The goal of this paper is to describe these loci locally near $\calD_g$, and the main tool will be by analyzing the Taylor expansions of theta functions near~$\II_g$. Recall that the theta function with characteristics $\ep,\de\in\ZZ_2^g$ is the function of $\tau\in\HH_g$ and $z\in\CC^g$ given by
$$
 \tc\ep\de(\tau,z):=\sum\limits_{p\in\ZZ^g} \exp \pi i\left[^t(p+\tfrac{\ep}{2})\tau(p+\tfrac{\ep}{2})+2 ^t(p+\tfrac{\ep}{2})(z+\tfrac{\de}{2})\right]\,.
$$
We will write theta characteristics also as $m=\chars\ep\de\in\ZZ_2^{2g}$; we will usually write $\ep,\de$ as rows (or sometimes columns, if notationally more convenient) of $g$ zeroes and ones, and operate with them over $\ZZ_2$ unless stated otherwise. In particular,~$m$ is called even or odd depending on whether the scalar product $\ep\cdot\de$ is zero or one as an element of $\ZZ_2$. As a function of~$z$, the theta function is even or odd, respectively. The theta constants are the values of theta functions at $z=0$, and theta gradients are the values of the $z$-gradient of the theta function, evaluated at $z=0$. We will drop the $z$ variable from notation in both cases, and write
$$
 \theta_m(\tau):=\theta_m(\tau,0)\in\CC;\qquad \grad\theta_m(\tau):=\left\lbrace\tfrac{\partial}{\partial z_a}\theta_m(\tau,z)|_{z=0}\right\rbrace_{a=1,\ldots,g}\in\CC^g\,.
$$
Note that theta constants vanish identically for $m$ odd, while theta gradients vanish identically for $m$ even. Theta constants and theta gradients are examples of scalar- (resp.~vector-) valued Siegel modular forms, i.e.~are sections of a suitable line (resp.~rank $g$ vector) bundle on a suitable cover of $\calA_g$. In fact these are modular forms with non-trivial multiplier with respect to $\Gamma_g(2)$. Moreover, $\Gamma_g$ acts on theta characteristics, considered as elements of $\ZZ_2^{2g}$, via an affine-linear action of its quotient $\Sp(2g,\ZZ_2)=\Gamma_g/\Gamma_g(2)$. This action is given explicitly by
\begin{equation}\label{eq:charaction}
 \sigma\circ \chars{\ep}{\de}:=\left(\begin{smallmatrix}D&-B \\ -C&A\end{smallmatrix}\right) \chars{\ep}{\de}+ \chars{{\rm
 diag}(c \,^t d)} {{\rm diag}(a\,^t b)}\,.
\end{equation}
We refer to \cite{igusabook} for further details, and note that $\Gamma_g(2)$ is precisely the subgroup of~$\Gamma_g$ that fixes every characteristic.

We recall from~\cite{igusabook,smlevel2} that the orbits of $\Gamma_g$ on tuples of characteristics are fully characterized by parity of characteristics, by the a/syzygy properties of triples of characteristics, and by linear relations with an even number of terms.

We will not use the details of this except to note that the zero loci of $\theta_m(\tau)$ and $\grad\theta_m(\tau)$, 
$$
\begin{aligned}
 \tn\chars\ep\de&:=\{\tc\ep\de(\tau)=0\}\subset\HH_g\quad{\rm and}\\
\gn\chars\ep\de&:=\{\grad\tc\ep\de(\tau)=0\}\subset\HH_g\,,
\end{aligned}
$$
are invariant under the action of~$\Gamma_g(2)$, and thus are preimages of well-defined subvarieties in $\calA_g(2)$.

For any $2\le k\le g$ we define $\tn^k\chars\ep\de\subset\tn\chars\ep\de$ to be the locus where the rank of the Hessian matrix $\left(\partial_{z_a}\partial_{z_b}\tc\ep\de(\tau,z)|_{z=0}\right)_{1\le a,b\le g}$ is at most $k$; by abuse of notation, we will use this notation for both a subvariety of $\calA_g(2)$ and an analytic subset of $\HH_g$, when no confusion can arise.

Since the action of $\Gamma_g/\Gamma_g(2)$ permutes theta characteristics and the loci $\tn\chars\ep\de$ and respectively $\gn\chars\ep\de$ transitively, it follows that their images
$$
 \vtn:=p(\tn\chars\ep\de)\subset\calA_g\quad{\rm and}\quad \vgn:=p(\gn\chars\ep\de)\subset\calA_g
$$
are independent of the choices of even or odd characteristic~$\chars\ep\de$, respectively. Geometrically,~$\vtn$ is the locus of ppav whose theta divisor has a singularity (necessarily of even multiplicity, at least 2) at an even two-torsion point, while~$\vgn$ is the locus of ppav whose theta divisor has a singularity (necessarily of odd multiplicity, at least 3) at an odd two-torsion point of the abelian variety.
The loci $\vtn^k:=p(\tn^k\chars\ep\de)\subset\calA_g$ are similarly independent of the choice of characteristic~$\chars\ep\de$.

For low genera these loci have a simple geometric interpretation, which is part of the motivation for studying them:
$$
\begin{array}{clll}
 g=2:&&\vtn=\calR_2=\calA_1\times\calA_1& \vgn=\emptyset \\
 g=3:&\vtn^2=\calR_3&\vtn=\calHJ_3&\vgn=\calA_1\times\calA_1\times\calA_1\\
 g=4:& \vtn^2=\calR_4&\vtn^3=\calJ_4\cap\vtn &\vgn=\calA_1\times\calHJ_3\\
 g=5:&&&\vgn=(\calA_1\times\vtn)\cup \calI\calJ\,,
\end{array}
$$
where in genera 4 and 5 the locus $\vtn$ does not admit such a quick geometric description (while genus 4 curves with a theta-null are canonical curves that lie on a singular quadric, there is no similarly easy description for principally polarized abelian fourfolds with a theta-null), and $\calI\calJ$ denotes the closure of the locus of intermediate Jacobians of cubic threefolds; see~\cite{grhu1} for more discussion of the cases $g=4,5$.

The locus~$\vtn$ was studied classically, and the first result in this study is that~$\vtn$ is always an irreducible divisor in~$\calA_g$ \cite[p.~88]{freitagbooksingular}, while in~\cite{grsmordertwo} we conjectured that~$\vgn$ is always of pure codimension~$g$ in~$\calA_g$, and proved this for every irreducible component of~$\vgn$ that intersects the boundary of the partial compactification of $\calA_g$. 

One can easily see that $\calR_g\subset\vtn^2$, while Riemann theta singularity theorem for Jacobians implies the inclusion $\calJ_g\cap\vtn\subset\vtn^3$. In~\cite{grsmgen4} we proved the conjecture of H.~Farkas that in genus 4 the equality $\calJ_4^\circ\cap\vtn=\vtn^3\setminus\vtn^2$ holds. One of our main results is~\Cref{thm:tn3}, extending this genus 4 statement, and the recent genus 5 result of Agostini and Chua to arbitrary genus, showing that $\calJ_g^\circ\cap\vtn$ is an irreducible component of~$\tn^3$ for any genus.

\smallskip
One technical point that pervades our work is whether we work on $\calA_g$, $\HH_g$, or (as the above discussion shows is often useful) on $\calA_g(2)$. Note for example that while $\vtn\subset\calA_g$ is irreducible, $\vtn(2)=p_2(p^{-1}(\vtn))=\cup\vtn\chars\ep\de\subset\calA_g(2)$ has $2^{g-1}(2^g+1)$ irreducible components, indexed by characteristics. However, by \cite{mosm} for any $g\geq 3$ the analytic spaces $\tn\chars\ep\de\subset\HH_g$ are irreducible for each $\chars\ep\de$.

\section{The decomposable and hyperelliptic loci}\label{sec:characterize}
In this section we recall the known characterizations of~$\calR_g$ and~$\calHJ_g$ in terms of vanishing of certain sets of theta constants, and study the combinatorics of the relevant characteristics. This is equivalent to describing the irreducible components of the corresponding loci on the level covers. For further use we denote
\begin{equation}\label{eq:scalar}
\langle\ep,\de\rangle:=\sum_{a=1}^g \ep_a\cdot\de_a\in\ZZ
\end{equation}
the scalar product of $\ep$ and $\de$, considered in~$\ZZ$ (unlike the usual pairing $\ep\cdot\de\in\ZZ_2$). We further denote by $\calE$ the set of all even characteristics, and for any even $\ell\in\ZZ_{\ge 0}$ denote by $\calE_\ell\subset\calE$ the set of all even characteristics such that $\langle\ep,\de\rangle=\ell$, and denote $\calE^*:=\calE\setminus\calE_0$. We will similarly decompose the set~$\calO$ of odd characteristics as $\calO=\sqcup_{1\le \ell \le g,\ \ell\textrm{ odd }}\calO_\ell$, and denote $\calO^*:=\calO\setminus\calO_1$ to exclude the ``simplest'' odd characteristics.

\subsection{The decomposable locus, and the diagonal}
Recall that the theta function near a block-diagonal period matrix factorizes as follows:
\begin{equation}\label{eq:factorize}
\tc{\ep_1\ep_2}{\de_1\de_2}\left(\left(\begin{smallmatrix} \tau_1&0\\ 0&\tau_2\end{smallmatrix}\right),\,\begin{smallmatrix} z_1\\ z_2\end{smallmatrix}\right)=\tc{\ep_1}{\de_1}(\tau_1,z_1)\cdot\tc{\ep_2}{\de_2}(\tau_2,z_2)
\end{equation}
for any $\tau_i\in\HH_{g_i}$ and $z_i\in\CC^{g_i}$ with $g_1+g_2=g$. By applying this formula recursively, we see that for a diagonal period matrix $\tau_0=\left(\begin{smallmatrix}t_1&0&\dots&0\\ 0&t_2&\dots&0\\ \vdots&\vdots&\ddots&\vdots\\ 0&0&\dots&t_g\end{smallmatrix}\right)\in\II_g$ the value of the theta constant is given by
$$
 \tc\ep\de(\tau_0)=\tc{\ep_1}{\de_1}(t_1)\cdot\ldots\cdot\tc{\ep_g}{\de_g}(t_g)\,.
$$
We recall the characterization of block-diagonal period matrices:
\begin{prop}[{\cite[Theorem 5]{smlevel2}}]\label{prop:vanishproducts} 
A period matrix $\tau\in\HH_g$ lies in the $\Gamma_g(2)$ orbit of the locus $\HH_{g_1}\times\HH_{g_2}$ if and only if $\tc{\ep_1&\de_1}{\ep_2&\de_2}(\tau)=0$ for all pairs of odd characteristics $\chars{\ep_1}{\de_1}\in\ZZ_2^{2g_1}$ and $\chars{\ep_2}{\de_2}\in\ZZ_2^{2g_2}$.
\end{prop}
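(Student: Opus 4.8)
The plan is to treat the two implications separately, the forward direction (decomposable $\Rightarrow$ vanishing) being immediate and the converse being the substance. Since both the hypothesis and the conclusion are invariant under $\Gamma_g(2)$, I may assume throughout that $\tau=\left(\begin{smallmatrix}\tau_1&\tau_{12}\\ {}^t\tau_{12}&\tau_2\end{smallmatrix}\right)$ with $\tau_i\in\HH_{g_i}$, and I must show that the vanishing of all the listed theta constants forces $\tau$ into the $\Gamma_g(2)$-orbit of $\HH_{g_1}\times\HH_{g_2}$. For the forward direction, if $\tau_{12}=0$ the factorization \eqref{eq:factorize} gives $\tc{\ep_1\ep_2}{\de_1\de_2}(\tau)=\tc{\ep_1}{\de_1}(\tau_1)\cdot\tc{\ep_2}{\de_2}(\tau_2)$, and the first factor vanishes because $\chars{\ep_1}{\de_1}$ is odd; by $\Gamma_g(2)$-invariance this persists on the whole orbit.

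For the converse, the strategy I would use is to pass to second-order theta constants. Writing $X_\sigma(\tau):=\tc{\sigma}{0}(2\tau,0)$ for $\sigma\in\ZZ_2^g$, Riemann's quadratic (duplication) relation expresses each even theta constant as
\begin{equation*}
\tc{\ep}{\de}(\tau)^2=\sum_{\sigma\in\ZZ_2^g}(-1)^{\sigma\cdot\de}\,X_\sigma(\tau)\,X_{\sigma+\ep}(\tau).
\end{equation*}
I arrange the second-order constants into the $2^{g_1}\times 2^{g_2}$ matrix $M(\tau):=\left(X_{(\sigma_1,\sigma_2)}(\tau)\right)_{\sigma_1\in\ZZ_2^{g_1},\,\sigma_2\in\ZZ_2^{g_2}}$. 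Splitting $\sigma=(\sigma_1,\sigma_2)$ and $\ep=(\ep_1,\ep_2)$ in the relation above, and using that $\chars{\ep_i}{\de_i}$ odd forces $\ep_i\neq 0$, a direct computation shows that for each $(\text{odd},\text{odd})$ pair the quantity $\tc{\ep}{\de}(\tau)^2$ equals a signed sum of the $2\times 2$ minors $X_{\sigma_1\sigma_2}X_{\sigma_1+\ep_1,\sigma_2+\ep_2}-X_{\sigma_1,\sigma_2+\ep_2}X_{\sigma_1+\ep_1,\sigma_2}$ of $M$; the key point is that, since $\ep_i\cdot\de_i=1$, the sign $(-1)^{\sigma\cdot\de}$ attaches opposite signs to the two diagonals of each relevant $2\times2$ block, so the diagonal products organize into genuine minors. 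Hence all these theta constants vanish precisely when $\tau$ lies in the common zero locus of this family of quadrics in the $X_\sigma$.

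The heart of the argument is then to show that this family cuts out exactly the rank-one locus $\{\rank M\le 1\}$, i.e.\ the locus where $M(\tau)$ is a decomposable tensor $X_{(\sigma_1,\sigma_2)}=u_{\sigma_1}v_{\sigma_2}$: as $\chars{\ep_1}{\de_1}$ and $\chars{\ep_2}{\de_2}$ range over all odd characteristics, one must check that the corresponding signed sums of minors have the same common zero set as the full ideal of $2\times2$ minors of $M$. Granting this, the vanishing hypothesis becomes equivalent to $\rank M(\tau)\le 1$. To finish, I would use that on the block-diagonal locus the factorization \eqref{eq:factorize} gives $X_{(\sigma_1,\sigma_2)}=\tc{\sigma_1}{0}(2\tau_1,0)\cdot\tc{\sigma_2}{0}(2\tau_2,0)$, so $\HH_{g_1}\times\HH_{g_2}$ maps under $\tau\mapsto[X_\sigma(\tau)]$ onto (an open part of) the Segre variety of rank-one tensors; combined with the classical (quasi-)injectivity of the theta-constant map modulo the relevant congruence subgroup — and the fact, equivalent to Riemann's relation, that the $X_\sigma$ are a linear reparametrization of the $\tc{\ep}{\de}(\tau)^2$ — this forces $\tau$ into the $\Gamma_g(2)$-orbit of $\HH_{g_1}\times\HH_{g_2}$. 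I expect the main obstacle to be precisely the combinatorial step of identifying the common zero locus of the $(\text{odd},\text{odd})$ theta squares with the full determinantal locus $\{\rank M\le 1\}$: each theta square is only a partial, signed sum of minors, so one must rule out the appearance of any spurious component in their intersection; the remaining, more analytic, ingredient is classical and I would simply cite it.
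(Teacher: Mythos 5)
First, a point of reference: the paper does not prove this proposition at all --- it is quoted as Theorem~5 of \cite{smlevel2} and used as a known input --- so your attempt can only be judged on its own merits, not against an internal argument. Your forward direction is correct, and the reduction via the duplication formula is sound. In fact, the step you single out as the ``main obstacle'' is not really an obstacle: it can be closed by a clean character-sum argument. For fixed $\ep_1\ne 0$, $\ep_2\ne 0$, grouping the sum over $\sigma\in\ZZ_2^g$ into cosets of the four-element subgroup generated by $(\ep_1,0)$ and $(0,\ep_2)$, and using $\ep_1\cdot\de_1=\ep_2\cdot\de_2=1$, gives
\begin{equation*}
\tc{\ep}{\de}(\tau)^2=2\sum_{[\sigma]}(-1)^{\sigma\cdot\de}\left(X_{\sigma_1,\sigma_2}X_{\sigma_1+\ep_1,\sigma_2+\ep_2}-X_{\sigma_1,\sigma_2+\ep_2}X_{\sigma_1+\ep_1,\sigma_2}\right),
\end{equation*}
and as $(\de_1,\de_2)$ runs over $\{\ep_1\cdot\de_1=1\}\times\{\ep_2\cdot\de_2=1\}$ the sign vectors $\left((-1)^{\sigma\cdot\de}\right)_{[\sigma]}$ form an invertible matrix (Fourier inversion on $\ZZ_2^{g_1}/\langle\ep_1\rangle\times\ZZ_2^{g_2}/\langle\ep_2\rangle$). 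Hence every individual $2\times2$ minor of $M$ is a linear combination of the $(\mathrm{odd},\mathrm{odd})$ theta squares, and the vanishing hypothesis is exactly equivalent to $\rank M(\tau)\le 1$, with no spurious components.

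The genuine gap is your final step. From $\rank M(\tau)\le 1$ you know the point $[X_\sigma(\tau)]\in\PP^{2^g-1}$ lies on the Segre variety, and you then invoke injectivity of the second-order theta map; but injectivity only applies once you know that $[X_\sigma(\tau)]$ lies in the \emph{image} of $\HH_{g_1}\times\HH_{g_2}$, and your justification --- that $\HH_{g_1}\times\HH_{g_2}$ maps \emph{onto an open part} of the Segre variety --- is false whenever $\max(g_1,g_2)\ge 3$. Indeed, the image of $\HH_{g_i}$ under $\tau_i\mapsto[\tc{\sigma_i}{0}(2\tau_i,0)]$ has dimension $g_i(g_i+1)/2$, which is strictly less than $2^{g_i}-1$ for $g_i\ge 3$, so the image of the block-diagonal locus is a proper subvariety of the Segre variety (and even for $g_i\le 2$, where the image is dense, you would still have to exclude rank-one points lying outside the open image). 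Thus knowing that $M(\tau)$ is a decomposable tensor $u\otimes v$ does not by itself produce period matrices $\tau_1\in\HH_{g_1}$, $\tau_2\in\HH_{g_2}$ whose second-order theta vectors are $u$ and $v$; showing that the image of $\HH_g$ meets the Segre variety only along the image of the block-diagonal locus (and not at points whose factors are degenerate, ``boundary'' vectors) is precisely the hard content of the cited theorem, and it is the part your argument leaves unproved.
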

By applying this proposition and using the factorization formula~\eqref{eq:factorize} recursively, one obtains
\begin{cor}\label{cor:diagonal}
A period matrix $\tau\in\HH_g$ lies in the $\Gamma_g(2)$ orbit of the locus $\II_g$ if and only if all theta constants $\tc\ep\de$ such that at least one column $\chars{\ep_a}{\de_a}$ of $\chars\ep\de$ is equal to~$\chars11$ vanish at $\tau$. 
\end{cor}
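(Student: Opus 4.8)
The plan is to prove both implications by combining \Cref{prop:vanishproducts} with the factorization formula~\eqref{eq:factorize}, inducting on $g\ge 1$. The base case $g=1$ is trivial, since $\II_1=\HH_1$ and the stated condition is vacuous: the only genus-one characteristic with a column $\chars11$ is $\chars11$ itself, whose theta constant vanishes identically. For the (easy) forward implication, suppose $\tau=\sigma\cdot\tau_0$ with $\sigma\in\Gamma_g(2)$ and $\tau_0\in\II_g$ diagonal. Since the loci $\tn\chars\ep\de$ are $\Gamma_g(2)$-invariant, $\tc\ep\de(\tau)=0$ if and only if $\tc\ep\de(\tau_0)=0$, and by the factorization formula applied recursively $\tc\ep\de(\tau_0)=\prod_{a=1}^g\tc{\ep_a}{\de_a}(t_a)$. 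If some column $\chars{\ep_a}{\de_a}$ equals $\chars11$, the corresponding factor is the odd genus-one theta constant $\tc11(t_a)$, which vanishes identically, so the product vanishes.

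For the converse, assume every $\tc\ep\de$ with a column equal to $\chars11$ vanishes at $\tau$, and consider the splitting $g=1+(g-1)$. The characteristics whose first column is $\chars11$ are precisely those whose genus-one part $\chars{\ep_1}{\de_1}$ is odd; all of these vanish at $\tau$ by hypothesis, so in particular those for which the remaining genus-$(g-1)$ part $\chars{\ep_2}{\de_2}$ is \emph{also} odd vanish. Thus the hypothesis of \Cref{prop:vanishproducts} is met, and it produces $\sigma\in\Gamma_g(2)$ with $\sigma\cdot\tau=\left(\begin{smallmatrix}t_1&0\\0&\tau'\end{smallmatrix}\right)$, where $t_1\in\HH_1$ and $\tau'\in\HH_{g-1}$.

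The crucial step is then to transfer the vanishing hypothesis down to $\tau'$. Given any genus-$(g-1)$ characteristic $\chars{\ep_2}{\de_2}$ having a column equal to $\chars11$, I prepend the even column $\chars00$ to obtain a genus-$g$ characteristic that still has a column $\chars11$; hence, using the hypothesis, the $\Gamma_g(2)$-invariance of the vanishing locus, and the factorization formula,
$$0=\tc{0\,\ep_2}{0\,\de_2}\left(\left(\begin{smallmatrix}t_1&0\\0&\tau'\end{smallmatrix}\right)\right)=\tc00(t_1)\cdot\tc{\ep_2}{\de_2}(\tau').$$
Here I would invoke the classical Jacobi identity $\tc00\,\tc01\,\tc10=2\eta^3$ on $\HH_1$, which shows that every even genus-one theta constant is nowhere vanishing; in particular $\tc00(t_1)\ne 0$, so $\tc{\ep_2}{\de_2}(\tau')=0$. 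Therefore $\tau'$ satisfies the hypothesis of the corollary in genus $g-1$, and by the inductive hypothesis lies in the $\Gamma_{g-1}(2)$-orbit of $\II_{g-1}$. Embedding the diagonalizing element of $\Gamma_{g-1}(2)$ block-diagonally into $\Gamma_g(2)$ (acting on the $\HH_{g-1}$ factor and fixing $t_1$) and composing with $\sigma$ places $\tau$ in the $\Gamma_g(2)$-orbit of $\II_g$, completing the induction.

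The main obstacle is exactly this division step. \Cref{prop:vanishproducts} only controls characteristics both of whose parts are odd, so after splitting off one elliptic factor one must pass the vanishing of the remaining genus-$(g-1)$ data through the genus-one factor, which requires knowing that some even genus-one theta constant is nonzero at $t_1$; the Jacobi identity makes this automatic and lets us use $\chars00$ uniformly. A minor point to verify is that the block-diagonal embedding of $\Gamma_{g-1}(2)$ indeed lands in $\Gamma_g(2)$, which is immediate since reduction modulo $2$ is compatible with the embedding.
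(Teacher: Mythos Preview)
Your proof is correct and follows essentially the same approach as the paper, which simply states that the corollary is obtained ``by applying this proposition and using the factorization formula~\eqref{eq:factorize} recursively.'' You have carefully spelled out that recursion as an induction on~$g$, including the division step via the nonvanishing of even genus-one theta constants, which is implicit in the paper's one-line justification.
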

In our notation, this corollary can be reformulated as the statement that $\tau\in\Gamma_g(2)\circ\II_g$ if and only if $\tau\in\tnm$ for all $m\in\calE^*$.

\Cref{prop:vanishproducts} and~\Cref{cor:diagonal} characterize the loci of block-diagonal and diagonal period matrices in~$\HH_g$, and their images in~$\calA_g(2)$; to obtain from these a characterization of the loci~$\calR_g$ and $\calD_g$ in~$\calA_g$ one needs to consider the orbits of the action of $\Gamma_g$ on the locus of block-diagonal period matrices. The (setwise) stabilizers of such loci are known classically \cite{freitagF}:
\begin{prop}\label{prop:stabproduct}
The (setwise) stabilizer $\Stab_{\Gamma_g}(\HH_{g_1}\times\HH_{g_2})$ is equal to the direct product $Stab_{g_1,g_2}:=\Gamma_{g_1}\times \Gamma_{g_2}$, except for the case $g_1=g_2=g/2$, when the stabilizer is the semi-direct product $Stab_{g/2,g/2}:=(\Gamma_{g/2}\times \Gamma_{g/2})\ltimes S_2$ with the involution interchanging the two blocks.
\end{prop}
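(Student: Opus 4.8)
The plan is to pass from period matrices to complex structures, where the $\Gamma_g$-action becomes conjugation and the stabilizer question turns into one about invariant subspaces. Recall that a point $\tau\in\HH_g$ is the same datum as an $E$-compatible complex structure $J$ on $\RR^{2g}=\Lambda\otimes\RR$, where $\Lambda=\ZZ^{2g}$ carries the standard symplectic form $E$; under this dictionary $\Gamma_g=\Sp(2g,\ZZ)$ acts by $J\mapsto\gamma J\gamma^{-1}$, compatibly with the action on $\HH_g$. Writing $W_1,W_2\subset\RR^{2g}$ for the two coordinate symplectic subspaces spanned by the first $g_1$ and the last $g_2$ vectors of each half of the standard symplectic basis, the factorization~\eqref{eq:factorize} shows that $\HH_{g_1}\times\HH_{g_2}$ is exactly the set $L$ of compatible $J$ that preserve the orthogonal splitting $\RR^{2g}=W_1\oplus W_2$, i.e.\ that restrict to compatible complex structures $J_1\oplus J_2$ on the two factors. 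The inclusion $\supseteq$ is then immediate: the block embedding of $\Gamma_{g_1}\times\Gamma_{g_2}$ preserves the splitting and hence $L$, and when $g_1=g_2$ the integral symplectic involution exchanging the two blocks maps $L$ to itself as well.

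For the reverse inclusion I would first reformulate the stabilizer condition. Since $\gamma J\gamma^{-1}$ preserves $W_i$ if and only if $J$ preserves $\gamma^{-1}W_i$, an element $\gamma\in\Gamma_g$ stabilizes $L$ precisely when both $\gamma^{-1}W_1$ and $\gamma^{-1}W_2$ are invariant under \emph{every} $J\in L$. The crux is therefore the purely linear-algebraic statement: the only subspaces of $\RR^{2g}$ invariant under all $J\in L$ are $0$, $W_1$, $W_2$, and $\RR^{2g}$. Granting this, a dimension count finishes the argument: $\gamma^{-1}W_1$ has dimension $2g_1$, so it equals $W_1$ when $g_1\ne g_2$, forcing $\gamma$ to preserve the splitting; and it lies in $\{W_1,W_2\}$ when $g_1=g_2$, in which case, after possibly composing with the block swap, $\gamma$ again preserves the splitting. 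An integral symplectic $\gamma$ preserving $W_1\oplus W_2$ restricts on each lattice $\Lambda\cap W_i$ to an element of $\Sp(2g_i,\ZZ)$ and equals the direct sum of these restrictions, so it lies in the block-embedded $\Gamma_{g_1}\times\Gamma_{g_2}$; this yields the asserted (semi)direct product.

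To prove the linear-algebra statement I would argue in two steps. First, on a single symplectic factor $(W_i,E)$, a subspace invariant under all compatible complex structures must be $0$ or $W_i$. The key input here, which I expect to be the \emph{main obstacle}, is that the compatible complex structures generate $\Sp(2g_i,\RR)$: the subgroup $H$ they generate is normalized by $\Sp(2g_i,\RR)$ because conjugation permutes compatible complex structures, it contains $-1=J_0^2$, and it is not central (it contains $J_0\ne\pm 1$); since $\Sp(2g_i,\RR)$ modulo $\{\pm 1\}$ is simple, $H$ must be all of $\Sp(2g_i,\RR)$, which acts irreducibly on $W_i$, so the only invariant subspaces are $0$ and $W_i$. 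Second, a mixing argument promotes this to the product: if $W'$ is invariant under all $J_1\oplus J_2$, then $W'\cap W_i$ is invariant under all compatible $J_i$ (fixing the other factor) and hence is $0$ or $W_i$, while differencing two complex structures on the first factor shows $(J_1-J_1')w_1\in W'\cap W_1$ for every $w=w_1+w_2\in W'$. The subspace of vectors killed by all such differences is $\Sp(2g_1,\RR)$-invariant, hence (not being all of $W_1$, as $\HH_{g_1}$ is positive-dimensional) equals $0$; one then concludes $W'=W_1\oplus(W'\cap W_2)$ when $W'\cap W_1=W_1$, and $W'\subseteq W_2$ otherwise, so in all cases $W'\in\{0,W_1,W_2,\RR^{2g}\}$.

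Beyond the generation statement, the only points requiring care are convention-checking in the complex-structure dictionary, so that $L$ really is the full real family $\{J_1\oplus J_2\}$ and the $\Gamma_g$-action is honest conjugation, and the observation that $\gamma^{-1}$ can interchange $W_1$ and $W_2$ only in the balanced case $g_1=g_2$, which is exactly what produces the extra $S_2$.
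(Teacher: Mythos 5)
Your proposal is correct, but there is nothing in the paper to compare it against step by step: the paper does not prove this proposition at all, stating it as a classical fact with a citation to Freitag~\cite{freitagF}. So your argument is by construction a different route --- a self-contained proof where the paper has only a reference. The strategy itself is sound: passing to the model of $\HH_g$ as compatible complex structures $J$ on $(\RR^{2g},E)$ with $\Gamma_g$ acting by conjugation, reducing the stabilizer computation to the rigidity statement that the only subspaces invariant under every $J_1\oplus J_2$ are $0$, $W_1$, $W_2$, $\RR^{2g}$, and then concluding by a dimension count. Both pillars of the rigidity statement hold up: the subgroup generated by the compatible complex structures on a factor is normal in $\Sp(2g_i,\RR)$ (conjugation permutes the generators), contains $-I$ and a non-central element, hence equals $\Sp(2g_i,\RR)$ by abstract simplicity of $\Sp(2g_i,\RR)/\{\pm I\}$, so irreducibility of the standard representation kills invariant subspaces of a single factor; and your differencing trick is valid because the subspace $\{v\in W_1:(J_1-J_1')v=0\ \forall J_1,J_1'\}$ is $\Sp(2g_1,\RR)$-invariant and proper, hence zero. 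The endgame is also right: $\gamma^{-1}W_i$ must be one of $W_1,W_2$, the swap can occur only when $g_1=g_2$ (giving exactly the extra $S_2$), and an integral symplectic $\gamma$ preserving the splitting restricts on each $\Lambda\cap W_i$ to an element of $\Gamma_{g_i}$. Two minor points of hygiene: the identification of $\HH_{g_1}\times\HH_{g_2}$ with the splitting-preserving $J$'s should be justified by directly computing that $J_1\oplus J_2$ has block-diagonal period matrix, not by invoking the theta factorization~\eqref{eq:factorize}, which is a statement about theta functions and is neither needed nor sufficient here; and your reformulation literally characterizes $\gamma L\gamma^{-1}\subseteq L$ rather than setwise equality, which is harmless since the resulting chain of inclusions $\Stab(L)\subseteq\{\gamma:\gamma L\gamma^{-1}\subseteq L\}\subseteq(\Gamma_{g_1}\times\Gamma_{g_2})\rtimes S_2\subseteq\Stab(L)$ forces all of them to be equalities. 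What your approach buys is a uniform conceptual proof whose only heavy input is the abstract simplicity of $\Sp(2g_i,\RR)/\{\pm I\}$; what the citation buys the paper is brevity.
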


\begin{cor}\label{prop:stabdiagonal}
The (setwise) stabilizer of the diagonal $\Stab_{\II_g} \subset \Gamma_g $ is equal to the wreath product $Stab_{\II_g}:=\Gamma_1\wr S_g$, i.e.~is the semidirect product of $(\Gamma_1)^{\times g}=\SL(2,\ZZ)^{\times g}$ and the permutation group $S_g$ of $g$ elements.
\end{cor}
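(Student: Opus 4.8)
The plan is to prove the two inclusions $\Gamma_1\wr S_g\subseteq\Stab_{\II_g}$ and $\Stab_{\II_g}\subseteq\Gamma_1\wr S_g$ separately, after realizing $\Gamma_1\wr S_g$ concretely inside $\Gamma_g=\Sp(2g,\ZZ)$. I would embed $(\Gamma_1)^{\times g}=\SL(2,\ZZ)^{\times g}$ as the block matrices acting on the $i$-th symplectic pair $\langle e_i,f_i\rangle$ by the $i$-th factor, and embed $S_g$ as the symplectic permutation matrices permuting these $g$ pairs simultaneously. Both visibly send a diagonal period matrix $\tau_0=\diag(t_1,\dots,t_g)$ to another diagonal matrix --- the first factorwise, the second by permuting the diagonal entries --- so the easy inclusion $\Gamma_1\wr S_g\subseteq\Stab_{\II_g}$ is immediate. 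This is the fully symmetric counterpart of the $S_2$ that already appears in the balanced case of \Cref{prop:stabproduct}.

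For the reverse inclusion I would evaluate a given $\gamma\in\Stab_{\II_g}$ at a sufficiently generic diagonal point. Choose $\tau_0=\diag(t_1,\dots,t_g)\in\II_g$ so that the elliptic curves $E_{t_1},\dots,E_{t_g}$ are pairwise non-isogenous and have no complex multiplication. Since $\gamma\in\Gamma_g$, the ppav $A_{\gamma\cdot\tau_0}$ is isomorphic to $A_{\tau_0}=E_{t_1}\times\dots\times E_{t_g}$, and since $\gamma$ stabilizes $\II_g$ the matrix $\gamma\cdot\tau_0$ is again diagonal, so $A_{\gamma\cdot\tau_0}$ is again a product of $g$ elliptic curves. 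The crucial input is the uniqueness of the decomposition of a ppav into indecomposable factors (Krull--Schmidt for polarized abelian varieties): it forces the two unordered lists of elliptic factors to coincide up to isomorphism, whence $\gamma\cdot\tau_0=\diag(s_1,\dots,s_g)$ with $s_j$ an $\SL(2,\ZZ)$-translate of $t_{\pi(j)}$ for a unique permutation $\pi=\pi(\gamma)\in S_g$. Since the non-isogenous, non-CM locus is irreducible and $\pi$ is locally constant there, it is independent of $\tau_0$, and $\gamma\mapsto\pi(\gamma)$ defines a homomorphism $\rho\colon\Stab_{\II_g}\to S_g$.

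To conclude I would assemble the short exact sequence $1\to\ker\rho\to\Stab_{\II_g}\xrightarrow{\ \rho\ }S_g\to 1$. Its image is all of $S_g$, because the permutation matrices above lie in $\Stab_{\II_g}$ and realize every $\pi$; these same matrices split $\rho$. An element of $\ker\rho$ fixes each coordinate factor setwise, so by the same uniqueness the associated symplectic transformation carries the $i$-th summand $\langle e_i,f_i\rangle$ to itself (the factors being pairwise non-isomorphic), i.e.\ it is block diagonal with blocks in $\SL(2,\ZZ)$; thus $\ker\rho=(\Gamma_1)^{\times g}$. As conjugation by the permutation matrices permutes these $g$ blocks, the extension is exactly the wreath product $\Gamma_1\wr S_g=(\Gamma_1)^{\times g}\rtimes S_g$, proving the claim. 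Alternatively one could induct on $g$ by applying \Cref{prop:stabproduct} to the splitting $\HH_1\times\HH_{g-1}\supset\II_g$, but then the work shifts to showing that the resulting factor-transpositions generate all of $S_g$.

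The main obstacle is the uniqueness-of-factorization step: everything hinges on ruling out symplectic elements that ``mix'' the $g$ coordinate directions while still preserving diagonality. The genericity of $\tau_0$ is what makes this clean, since it both pins down $\pi$ canonically and excludes the extra polarized automorphisms that appear at special points such as $E\times E$. One must finally check that reducing to such a generic point is legitimate, which it is: $\Stab_{\II_g}$ is a setwise stabilizer, so every element must in particular map the chosen generic point back into $\II_g$.
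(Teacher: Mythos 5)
Your proof is correct in substance, but it follows a genuinely different route from the paper: the paper offers no written proof at all, presenting the statement as an immediate consequence of Freitag's description of $\Stab_{\Gamma_g}(\HH_{g_1}\times\HH_{g_2})$ in \Cref{prop:stabproduct}, whereas you argue directly and self-containedly from moduli theory. Your key mechanism --- evaluate $\gamma\in\Stab_{\II_g}$ at a diagonal point $\diag(t_1,\dots,t_g)$ whose elliptic factors are pairwise non-isogenous --- is sound, and at such a point the ``Krull--Schmidt'' input you invoke is actually elementary: since $\op{Hom}(E,E')=0$ for non-isogenous elliptic curves, any isomorphism $A_{\tau_0}\to A_{\gamma\cdot\tau_0}$ between the two products must carry the $i$-th factor isomorphically onto the $\pi(i)$-th factor for a unique $\pi\in S_g$, and since $\gamma$ \emph{is} (the matrix on $H_1$ of) such an isomorphism, its blocks $A,B,C,D$ vanish outside the positions prescribed by $\pi$, i.e.\ $\gamma$ lies in the embedded copy of $\Gamma_1\wr S_g$ at once. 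This buys a proof independent of the cited result of Freitag (indeed your method reproves the relevant case of \Cref{prop:stabproduct}), at the cost of needing the genericity and uniqueness-of-factors discussion; the paper's implicit route outsources exactly that work to the reference.

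Two details in your write-up deserve tightening, though neither is a fatal gap. First, your appeal to the ``irreducible, non-isogenous, non-CM locus'' on which $\pi$ is locally constant is loose: that locus is not a variety but the complement of countably many proper analytic subvarieties of $\II_g\cong\HH_1^{\times g}$; one should say it is path-connected for that reason, and that $\pi$ is constant along paths because the partition by the closed conditions $j(s_j(t))=j(t_i(t))$ on $j$-invariants forces a single permutation on a connected interval. Second, and more efficiently, the entire short-exact-sequence scaffolding $1\to\ker\rho\to\Stab_{\II_g}\to S_g\to 1$ can be discarded: the single-point argument above already places each individual $\gamma$ inside $\Gamma_1\wr S_g$, so the reverse inclusion is proved without ever checking that $\rho$ is a well-defined homomorphism, which is precisely the step that required the connectedness discussion.
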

Here we think of the permutation group $S_g$ as embedded into~$\Gamma_g$ as block matrices with two off-diagonal $g\times g$ blocks equal to zero, and two on-diagonal $g\times g$ blocks equal to each other, and each being a permutation matrix.
 
For our purposes, we are interested in finding an explicit manageable subgroup acting transitively on the sets $\calE$ and $\calO$, and we will find such a subgroup that contains $\Stab_{\II_g}$, but is slightly larger. This is a manifestation of the idea we'll use later in the paper: instead of just considering diagonal period matrices, we will allow some $2\times 2$ blocks, and will enlarge the group correspondingly.

We thus set $G_g:=\Gamma_2\times (\Gamma_1)^{\times (g-2)} \wr S_g$, where we think of the first factor as block-diagonal period matrices with one $2\times 2$ block and $g-2$ blocks of size $1\times 1$, and $S_g$ is embedded into $\Sp(2g,\ZZ)$ as before.
\begin{lm}\label{lm:orbita}
The group $G_g$ acts transitively on each of the two sets of characteristics $\calE$ and $\calO$.
\end{lm}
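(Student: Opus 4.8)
The plan is to reduce an arbitrary characteristic to a fixed normal form within its parity class by acting with the generators of $G_g$ one column (or one pair of columns) at a time. I view a characteristic $m=\chars\ep\de$ as the ordered $g$-tuple of its columns $m_a=\chars{\ep_a}{\de_a}\in\ZZ_2^2$; each column is one of the three even types $\chars00,\chars01,\chars10$ or the single odd type $\chars11$, and the parity of $m$ is $\langle\ep,\de\rangle=\#\{a:m_a=\chars11\}\bmod 2$. First I would record how each factor of $G_g$ acts on columns: the copy of $\Gamma_1=\SL(2,\ZZ)$ in the $a$-th slot acts through the affine action of $\Sp(2,\ZZ_2)\cong S_3$ on the $a$-th column alone, which permutes the three even columns transitively while fixing $\chars11$ (it preserves per-column parity, since it preserves total parity and fixes all other columns); the $S_g$ factor permutes the columns; and the single $\Gamma_2$ block, conjugated by $S_g$ so as to act on any prescribed pair of slots, acts through $\Sp(4,\ZZ_2)$ on that pair and preserves the parity of the pair.

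The reduction then proceeds in three steps. Using the per-slot copies of $\Gamma_1$, I normalize every even column to $\chars00$, so that afterwards each column is either $\chars00$ or $\chars11$; the $\chars11$ columns are untouched because single-slot transformations cannot change per-column parity, so the number $j:=\langle\ep,\de\rangle$ of odd columns is unchanged. Next, using a permutation in $S_g$, I gather all $j$ of the $\chars11$ columns into the initial slots $1,\dots,j$. Finally, I apply to the successive disjoint pairs $\{1,2\},\{3,4\},\dots$ the copy of $\Gamma_2$ acting on that pair: on each pair the configuration $(\chars11,\chars11)$ is an \emph{even} genus-$2$ characteristic, and I map it to $(\chars00,\chars00)$. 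When $m$ is even, $j$ is even and all $\chars11$ columns are annihilated in pairs, carrying $m$ to the all-zero characteristic; when $m$ is odd, exactly one $\chars11$ survives, which a further permutation moves to the first slot, yielding the fixed representative $(\chars11,\chars00,\dots,\chars00)$. Since every element of $\calE$ (resp. $\calO$) is carried to a common representative, $G_g$ acts transitively on $\calE$ and on $\calO$.

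The one genuinely non-formal input, and the reason the group must be enlarged beyond $\Stab_{\II_g}=\Gamma_1\wr S_g$, is the pair-cancellation step: I must know that in genus $2$ the characteristic $(\chars11,\chars11)$ and the zero characteristic $(\chars00,\chars00)$ lie in the same $\Gamma_2$-orbit. This is precisely the classical transitivity of $\Gamma_2$ on even genus-$2$ characteristics, which I would quote from the description of $\Sp(2g,\ZZ_2)$-orbits recalled in \Cref{sec:notat}. The point to emphasize is that the diagonal stabilizer alone is insufficient: its generators preserve the number $j$ of odd columns, so $\Stab_{\II_g}$ has one orbit for each $j\in\{0,\dots,g\}$, and it is exactly the single $\Gamma_2$ block that lets $j$ decrease by $2$ and thereby connects all characteristics of a fixed parity. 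No serious obstacle remains once this genus-$2$ fact is in hand; the remainder is the bookkeeping of the three steps above.
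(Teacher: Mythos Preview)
Your proof is correct and follows essentially the same approach as the paper's: normalize even columns via the per-slot $\Gamma_1$'s, gather the $\chars11$ columns using $S_g$, and then use the $\Gamma_2$ block to cancel them in pairs. The only difference is cosmetic: the paper exhibits an explicit matrix $\sigma_0\in\Gamma_2$ sending $\chars{00}{00}$ to $\chars{11}{11}$, whereas you invoke the classical transitivity of $\Gamma_2$ on even genus-$2$ characteristics (which is indeed recalled in \Cref{sec:notat}).
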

\begin{proof}
We do the even case, the odd case being completely analogous.

We first permute the coordinates so that all the $\ell$ columns of characteristic that are equal to $\chars11$ appear first. Then since $\Gamma_1$ acts transitively on the set of 3 even characteristics in genus one, acting by $\Gamma_1$ on the characteristics in each even column maps them all to $\chars00$. Altogether, this shows that for a fixed $\ell$, the stabilizer $\Stab_{\II_g}\subset\Gamma_g$ acts transitively on the set $\calE_\ell$. Since $\Stab_{\II_g}\subset G_g$, it is thus enough to show that $G_g$ can change $\ell$ arbitrarily. For this, we observe that the element 
$$\sigma_0: =\left(\begin{smallmatrix} 1 & 1 &1 & 0 \\ 1 & 1 & 0 & 1\\ 0&1&1 & 0 \\1&0&0&1 \end{smallmatrix}\right)\in\Gamma_2$$
sends $\chars{00}{00}$ to $\chars{11}{11}$.
Once the columns of a characteristic are permuted so that the first $\ell$ ones are equal to $\chars11$, we apply $\sigma_0$ in the first two coordinates to make the first two columns equal to $\chars00$, thus going from a characteristic in $\calE_\ell$ to a characteristic in $\calE_{\ell-2}$. Repeating this process shows that the $G_g$ orbit of any characteristic in $\calE_\ell$ contains a characteristic in $\calE_0$.
\end{proof}

\subsection{The hyperelliptic locus}\label{sec:hypcomp}
We recall from ~\cite{tsu} and \cite{mosm} that the irreducible components of $\HJ_g\subset \HH_g$ are in bijection with (and are in fact preimages of) the irreducible components of $\calHJ_g(2)\subset\calA_g(2)$. We will describe one such component explicitly, which will suffice since $\Gamma_g$ acts transitively on the set of irreducible components of $\HJ_g$ or $\calHJ_g(2)$.

We say that a set $m_0,\ldots,m_{2g}$ of characteristics is called an essential basis if any characteristic $m\in\ZZ_2^{2g}$ can be written uniquely as a sum of an odd number of $m_i$'s. It follows from the description of the action that an element of $\Gamma_g$ lies in $\Gamma_g(2)$ (equivalently, fixes all characteristics) if and only if it fixes every element of a chosen essential basis.

Recall that a special fundamental system of characteristics is a set of~$g$ odd characteristics and~$g+2$ even characteristics such that every triple of characteristics is azygetic. The description of the orbits of the action of~$\Gamma_g$ on tuples of characteristics implies that $\Gamma_g$ acts transitively on the set of special fundamental systems. Moreover, the condition of being azygetic implies that any subsequence of a fundamental system is a sequence of essentially independent characteristics,i.e. the sum of an  even  number of characteristics is  always  different from $0$,  see~\cite{igusajacobi}. As a consequence, eliminating any characteristic from any special fundamental system of characteristics gives an essential basis.
We now fix the following special fundamental system:
\begin{equation}\label{eq:specsystem}
I:=\left(o_1 ,\dots, o_{g}, e_1,\dots, e_{g+2}\right)=
\left(\begin{smallmatrix}
1&0&0&\dots&0&0&0&1&0&0&\ldots&0&0&0\\
0&1&0&\dots&0&0&0&0&1&0&\ldots&0&0&0\\
0&0&1&\dots&0&0&0&0&0&1&\dots&0&0&0\\
\vdots&\vdots&\vdots&\ddots&\vdots&\vdots&\vdots&\vdots&\vdots&\vdots&\ddots&\vdots&\vdots&\vdots\\
0&0&0&\dots&1&0&0&0&0&0&\dots&1&0&0\\
0&0&0&\dots&0&1&0&0&0&0&\dots&0&1&0\\[6pt]
\hline\\[6pt]
1&1&1&\dots&1&1 &0&0& 1&1&1&\dots&1&1\\
0&1&1&\dots&1&1 &0&0& 0&1&1&\dots&1&1\\
0&0&1&\dots&1&1 &0&0& 0&0&1&\dots&1&1\\
\vdots&\vdots&\vdots&\ddots&\vdots&\vdots& \vdots&\vdots& \vdots&\vdots&\vdots&\ddots&\vdots&\vdots\\
0&0&0&\dots&1&1 &0&0& 0&0&0&\dots&1&1\\
0&0&0&\dots&0&1 &0&0& 0&0&0&\dots&0&1
\end{smallmatrix}\right)\,,
\end{equation}
where we have denoted the $g$ odd characteristics by~$o_j$, and the~$g+2$ even characteristics by $e_j$. We further denote
\begin{equation}\label{eq:bdefined}
b^g:=o_1+\dots+o_g=\chars{1&1&\dots&1&1}{\tfrac{1-(-1)^g}{2}&\tfrac{1-(-1)^{g-1}}{2}&\dots&0&1}\equiv
\chars{1&1&\dots&1&1}{g&g-1&\dots&2&1}\mod 2
\end{equation}
the sum of the odd characteristics in this special fundamental system.

If we exclude $e_1=\chars{0\dots0}{0\dots 0}$, the remaining $2g+1$ characteristics of the special fundamental system~$I$ form an essential basis. Thus any characteristic $m\in\ZZ_2^{2g}$ can be written as a sum of an odd number among these $2g+1$ characteristics. Since the sum of all characteristics in~$I$ is zero, the sum of any subset of characteristics in~$I$ is equal to the sum of the complementary subset of characteristics in~$I$. Thus altogether we see that every characteristic~$m$ can be written uniquely as a sum of at most $g$ among the characteristics $o_1,\dots,o_g,e_2,\dots,e_{g+2}$. Suppose~$m$ is the sum of~$k$ among these characteristics; then it can be checked that the characteristic $m+b^g$ is even if and only if $k\equiv g$ or $k\equiv g+1\mod 4$. We then have the following
\begin{prop} [\cite{mumfordbooktheta2}, \cite{poor}, \cite{tsu}]\label{prop:hyponecomponent}
There exists an irreducible component $\HJ_g^{I}$ of $\HJ_g$ that is defined by the equations
\begin{equation}\label{eq:hypvanish}
\theta_{m+b^g}(\tau)=0
\end{equation}
for all~$m$ that are equal to a sum of {\em strictly less than} $g$ among the characteristics $o_1,\ldots,o_g,e_2,\ldots,e_{g+2}$.
\end{prop}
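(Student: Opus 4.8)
The plan is to deduce the proposition from the classical theorem of Mumford characterizing hyperelliptic period matrices by the vanishing of theta constants, and then to carry out the combinatorial translation that matches Mumford's indexing set with the one described here through the fixed special fundamental system $I$.

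First I would recall Mumford's dictionary (\cite{mumfordbooktheta2}) between theta characteristics and subsets $S$ of the set $B=\{1,\dots,2g+2\}$ of Weierstrass points of a hyperelliptic curve, where $S$ is taken modulo complementation $S\mapsto B\setminus S$ and of the appropriate parity. Fixing an ordering of the Weierstrass points singles out one irreducible component of $\HJ_g$, and Mumford's vanishing theorem says that, for $\tau$ in this component, the even theta constant attached to $S$ vanishes precisely when $|S|\neq g+1$, while the $\tfrac12\binom{2g+2}{g+1}$ even theta constants attached to the ``middle'' subsets with $|S|=g+1$ are not identically zero. (This is consistent with the low-genus picture: for $g=2$ it predicts all $10$ even theta constants nonzero, and for $g=3$ exactly one vanishing, recovering $\calHJ_3=\vtn$.) The non-trivial converse --- that the simultaneous vanishing of exactly these theta constants cuts out the hyperelliptic component, so that it is genuinely \emph{defined} by these equations --- is the content of the results of Poor \cite{poor} and Tsuyumine \cite{tsu}.

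Second I would identify this indexing set with the one in the statement. As recalled just above, using that the sum of all characteristics in $I$ is zero, every characteristic $m$ is written uniquely as a sum of $k\le g$ of the $2g+1$ characteristics $o_1,\dots,o_g,e_2,\dots,e_{g+2}$. The translation by $b^g=o_1+\dots+o_g$ is exactly the shift converting the additive ``essential basis'' description into Mumford's subset dictionary: one checks that $m+b^g$ is the characteristic attached to a subset $S$ whose cardinality is controlled by the weight $k$, with $|S|=g+1$ occurring exactly for the top weight $k=g$ and $|S|\neq g+1$ for every $k<g$. The parity identity recalled before the proposition --- that $m+b^g$ is even iff $k\equiv g$ or $g+1\pmod 4$ --- is the compatibility check guaranteeing that the conditions $\theta_{m+b^g}=0$ fall only on even characteristics (for odd $m+b^g$ the theta constant vanishes identically and imposes nothing). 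Hence the equations $\theta_{m+b^g}(\tau)=0$ ranging over all $m$ of weight $k<g$ are precisely the vanishing of the even theta constants attached to the subsets with $|S|\neq g+1$, i.e.\ Mumford's defining system for the component.

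Finally, irreducibility is immediate from the discussion at the start of \Cref{sec:hypcomp}: the locus so defined is the preimage in $\HH_g$ of one irreducible component of $\calHJ_g(2)\subset\calA_g(2)$, and $\Gamma_g$ permutes these components transitively, so $\HJ_g^I$ is indeed an irreducible component of $\HJ_g$. I expect the main obstacle to be the second step: fixing Mumford's characteristic-to-subset dictionary precisely enough --- including the correct base point $b^g$ and the complementation symmetry --- to be sure that ``weight $k<g$ in the essential basis'' coincides with ``$|S|\neq g+1$'' rather than with a complementary or shifted condition. Once the parity identity is reconciled with Mumford's cardinality criterion the remaining verifications are purely formal.
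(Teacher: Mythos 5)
Your proposal follows essentially the same route as the paper, which does not reprove this statement but justifies it exactly as you do: Mumford supplies the vanishing pattern indexed by subsets of Weierstrass points (translated into the essential-basis/weight-$k$ language via the shift by $b^g$), while Poor and Tsuyumine supply the key point that the vanishing conditions \emph{alone} --- without the non-vanishing requirements of Mumford's if-and-only-if characterization --- already cut out an irreducible component of $\HJ_g$. The remark immediately following the proposition in the paper makes precisely this division of labor explicit, so your sketch, including the combinatorial dictionary matching weight $k=g$ with Mumford's cardinality-$(g+1)$ criterion, is a correct expansion of the paper's citation-based argument.
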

\begin{rem}
The actual result of Mumford is that if $\tau\in\HH_g$ is such that $\theta_{m+b^g}(\tau)\ne 0$ if {\em and only if} $k=g$, then $\tau\in\HJ_g^\circ$. In the above proposition we do not require the non-vanishing of those $\theta_{m+b^g}(\tau)$ where~$m$ is the sum of precisely~$g$ elements of the special fundamental system. Thus clearly the locus described in the proposition contains an irreducible component of~$\HJ_g$. Furthermore, Poor~\cite{poor} proved that the vanishing conditions~\eqref{eq:hypvanish} by themselves cut out an irreducible component of $\HJ_g\setminus\RR_g$.
\end{rem}
The action of~$\Gamma_g$ is transitive on the set of all special fundamental systems,
and thus one has the following characterization of the hyperelliptic locus:
\begin{prop}[\cite{mumfordbooktheta2}, \cite{poor}]\label{prop:allhyp}
An indecomposable period matrix $\tau\in\HH_g\setminus\RR_g$ lies in $\HJ_g$ if and only if there exists a special fundamental system $o'_1,\ldots,o'_g$, $e'_1,\dots,e'_{g+2}$ such that defining $b'^g:=o'_1+\dots+o'_g$, the theta constant $\theta_{m+b'^g}$ vanishes at $\tau$ if and only if $m$ can be written as a sum of strictly less than $g$ elements of the special fundamental system.
\end{prop}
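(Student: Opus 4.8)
The plan is to deduce the general statement from the single-component description already established, namely \Cref{prop:hyponecomponent} together with the Remark following it, by transporting that description around under the action of $\Gamma_g$. The two structural facts I would invoke are recalled earlier: first, since $\calHJ_g$ is irreducible and $p\colon\HH_g\to\calA_g$ is Galois, $\Gamma_g$ permutes the irreducible components of $\HJ_g=p^{-1}(\calHJ_g)$ transitively; and second, $\Gamma_g$ acts transitively on the set of all special fundamental systems. By \Cref{prop:hyponecomponent}, Poor's result, and Mumford's biconditional quoted in the Remark, the distinguished component $\HJ_g^I$, and its genuine hyperelliptic points inside $\HH_g\setminus\RR_g$, are cut out precisely by the pattern ``$\theta_{m+b^g}(\tau)=0$ exactly for those $m$ that are sums of strictly fewer than $g$ of the characteristics $o_1,\dots,o_g,e_2,\dots,e_{g+2}$.'' Since every indecomposable point of $\HJ_g$ lies in some component, and each component has the form $\gamma\cdot\HJ_g^I$, it suffices to show that applying an arbitrary $\gamma\in\Gamma_g$ converts this description of $\HJ_g^I$ into the corresponding description relative to the translated system $I':=\gamma\circ I$; transitivity on special fundamental systems then makes $I'$ range over all such systems, yielding both implications.

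The engine for the transport is the classical theta transformation formula (see \cite{igusabook}): for every $\gamma\in\Gamma_g$ and every characteristic $n$, the constant $\theta_{\gamma\circ n}(\gamma\cdot\tau)$ equals $\theta_n(\tau)$ up to a nowhere-vanishing automorphy factor, so that $\theta_{\gamma\circ n}(\gamma\cdot\tau)=0$ if and only if $\theta_n(\tau)=0$. Thus, given an indecomposable $\tau\in\HJ_g$ lying in the component $\gamma\cdot\HJ_g^I$, the matrix $\gamma^{-1}\cdot\tau$ realizes the vanishing/non-vanishing pattern for $I$, and applying the transformation formula term by term converts this into the same pattern, now read through the characteristics $\gamma\circ(m+b^g)$, for the theta constants at $\tau$. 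Setting $o'_i:=\gamma\circ o_i$, $e'_j:=\gamma\circ e_j$, and $b'^g:=o'_1+\dots+o'_g$, I would then only need to identify each $\gamma\circ(m+b^g)$ with $m''+b'^g$ for an appropriate sum $m''$ of strictly fewer than $g$ elements of the special fundamental system $I'$.

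This last identification, tracking characteristics through the \emph{affine}-linear action \eqref{eq:charaction}, is where I expect the only genuine work to lie. Writing $\gamma\circ n = Mn+v$ with $M$ the linear part of \eqref{eq:charaction} and the fixed translation vector $v=\chars{\op{diag}(C\,^tD)}{\op{diag}(A\,^tB)}$, one has $\gamma\circ(n_1+\dots+n_k)=(\gamma\circ n_1)+\dots+(\gamma\circ n_k)+(k-1)v$ over $\ZZ_2$, and a short computation gives $\gamma\circ(m+b^g)=m'+b'^g+(k+g+1)v$, where $m$ is a sum of $k$ of the essential-basis characteristics and $m'$ is the sum of the corresponding $k$ translated ones. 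The correction $(k+g+1)v$ is either $0$ or exactly $v=\gamma\circ\chars00=e'_1$, one of the characteristics of $I'$ itself; absorbing it changes the number of summands by at most one, and the parity check that $k+g$ even together with $k<g$ forces $k\le g-2$ shows the outcome is still a sum of strictly fewer than $g$ elements of the full system $I'$. This is precisely why the statement must be phrased in terms of the $2g+2$ characteristics of the special fundamental system rather than the essential basis: the affine shift is absorbed by $e'_1$, which vanishes for $I$ but need not for $I'$. Running the same computation with $\gamma^{-1}$ shows the assignment is a bijection between the two relevant sets of even characteristics, so the vanishing loci match exactly and both directions of the biconditional follow (the coincidence of $\HJ_g\setminus\RR_g$ with the locus of genuine hyperelliptic Jacobians being governed by the Poor and Mumford results already cited). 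Everything outside this affine bookkeeping is formal once transitivity and the transformation formula are in place.
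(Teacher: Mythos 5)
Your forward computation is correct and isolates a genuine subtlety (the affine shift being absorbed by $e'_1$), but the proof collapses at the final assertion that ``running the same computation with $\gamma^{-1}$'' makes the assignment a bijection. The two directions are not symmetric: for $\gamma$ the correction vector $v=\gamma\circ\chars{0}{0}$ equals $e'_1$, an element of $I'$, and this rests on the special feature that the \emph{zero} characteristic $e_1=\chars{0}{0}$ is a member of $I$. For $\gamma^{-1}$ the correction is $v'=\gamma^{-1}\circ\chars{0}{0}$, which is $\gamma^{-1}$ applied to the zero characteristic, \emph{not} to $e'_1$; since $\chars00$ is in general not an element of $I'$, the vector $v'$ is not an element of $I$ and cannot be absorbed. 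In fact the identity you need is false. Concretely, in genus $3$ the system \eqref{eq:specsystem} reads $o_1=\chars{100}{100}$, $o_2=\chars{010}{110}$, $o_3=\chars{001}{111}$, $e_3=\chars{010}{100}$, with $b^3=\chars{111}{101}$, and $u:=o_1+e_3=\chars{110}{000}$ is \emph{even}; choose $\gamma\in\Gamma_3$ with $\gamma\circ u=\chars{000}{000}$ (possible by transitivity on even characteristics). A short computation with the affine action \eqref{eq:charaction} (using $Mu=v$) gives, for the transported system $I'=\gamma\circ I$,
$$
o'_3+b'^3\;=\;o'_1+o'_2\;=\;\gamma\circ(o_2+e_3)\;=\;\gamma\circ\chars{000}{010}\,,
$$
which is an \emph{even} characteristic, and it lies in your pattern set for $I'$ (take $m=o'_3$, a sum of one element). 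But for the standard system the corresponding characteristic $o_3+b^3=o_1+o_2=\chars{110}{010}$ is \emph{odd}, so it imposes no condition on $\HJ_3^I$; a generic $\tau\in\gamma(\HJ_3^I)$ is an indecomposable hyperelliptic period matrix and has exactly one vanishing even theta constant, namely $\theta_{\gamma\circ b^3}$, and $\gamma\circ(o_2+e_3)\neq\gamma\circ b^3$. So $\theta_{o'_3+b'^3}(\tau)\neq 0$ while $o'_3$ is a sum of fewer than $g$ elements of $I'$: the biconditional pattern fails for the transported system, and both directions of your argument break. The underlying phenomenon is that sums of characteristics are not equivariant under the affine action, so the parity of $m+b'^g$ is not preserved by transport and the pattern-set construction is not $\Gamma_g$-equivariant; transported systems can be ``bad,'' demanding more vanishings than any indecomposable ppav possesses.

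This also explains why the paper offers no proof of \Cref{prop:allhyp}: it is quoted as a classical theorem of Mumford and Poor, whose proofs work inside the branch-point model for hyperelliptic curves (where the combinatorics is manifestly symmetric under permuting the $2g+2$ branch points and the correct special fundamental system is produced directly from the curve), not by transporting the single fixed system $I$ by $\Gamma_g$. The sentence in the paper about transitivity on special fundamental systems is a gloss motivating the statement, not a derivation, and your attempt shows precisely why the naive derivation cannot work. Note finally that restricting to $\gamma$ fixing $\chars00$ (so that $v=0$ and transport is clean) does not rescue the argument: such $\gamma$ can never carry $\HJ_g^I$ to, e.g., the component of $\HJ_3$ on which $\theta_{\chars{000}{000}}$ vanishes, since a linear symplectic substitution cannot send the nonzero characteristic $b^3$ to zero.
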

\begin{rem}\label{rem:hpo}About the odd counterpart, we observe that, when $g\geq 5$, the theta gradient with characteristic $m_{I}=o_1+o_2+o_3+o_4+o_5=\chars {111110\dots0}{101010\dots0}$ vanishes along the component $\HJ_g^{I}$. Indeed, $$m_{I}=b^g+o_6+\dots+o_g$$ is the sum of $b^g$ and~$g-5$ elements of the special fundamental system, and this condition implies the vanishing of the gradient of the theta function at the hyperelliptic point $\tau$, see~\cite{igusajacobi}.
\end{rem}
\begin{rem}
We will show that \Cref{thm:hypsmall} applies to $\HJ_g^{I}$ , though in fact the irreducible component that Shepherd-Barron uses in~\cite{sbpoincare} is a different one. Note that $\II_g$ is contained in multiple irreducible components of $\HJ_g$, see the discussion after the proof of~\Cref{thm:hypfull}.
\end{rem}
 In  \cite{tsu}, Tsuyumine studies the intersection of irreducible components of $\calR_g(2)$ and of $\calHJ_g(2)$. He also shows that the stabilizer of the component $\calHJ_g^I$ of $\calHJ_g(2)$ is isomorphic to the symmetric group $S_{2g+2}$. Moreover he considers also the boundary components of $\calHJ_g^I$ contained in $\calR_g (2)$. While his analysis again is only for decomposable ppav that are products of two indecomposable ones, his analysis extends in full generality to yield the statement that all boundary components related to a decomposition $ g= g_1+\dots+g_k$ are conjugated via the stabilizer subgroup at $\calHJ_g^I$:
\begin{lm}\label{lm:tsu}
For any two irreducible components $Z$ and $W$ of $\DD_g$ contained in an irreducible component $X$ of $\HJ_g$, there exists $\sigma \in Stab_{X}$ such that $\sigma( Z)= W$. 

For any two irreducible components $X$ and $Y$ of $\HJ_g$ containing $\II_g$, there exists $\sigma \in Stab_{\II_g}$ such that $\sigma( Y)= X$.
\end{lm}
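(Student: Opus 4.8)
The plan is to obtain both statements from the extension of Tsuyumine's analysis recalled above, using as the only additional input the transitivity of $\Gamma_g$ on the irreducible components of $\HJ_g$. The conceptual point is that the first statement is essentially a reformulation of the extended Tsuyumine result specialized to one decomposition type, while the second is a purely formal consequence of the first.

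For the first statement, I would begin with the observation that every irreducible component of $\DD_g$ is a $\Gamma_g$-translate of $\II_g$, and hence corresponds to the finest decomposition $g=1+\dots+1$ into $g$ one-dimensional factors; thus any two such components $Z,W$ lying in a fixed $X$ are boundary components of $X$ of one and the same decomposition type. First I would dispose of the case $X=\HJ_g^{I}$, where the setwise stabilizer is governed by the symmetric group $S_{2g+2}$ acting on the $2g+2$ branch points of the hyperelliptic cover: here the extension of Tsuyumine's result asserts precisely that this stabilizer acts transitively on the set of boundary strata of $\HJ_g^{I}$ of a given decomposition type, which yields $\sigma\in\Stab_X$ with $\sigma(Z)=W$. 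For an arbitrary component $X$, I would use transitivity of $\Gamma_g$ to choose $\rho\in\Gamma_g$ with $\rho(\HJ_g^{I})=X$; since $\DD_g$ is $\Gamma_g$-invariant, the components $\rho^{-1}(Z)$ and $\rho^{-1}(W)$ of $\DD_g$ lie in $\HJ_g^{I}$, and conjugating the solution for $\HJ_g^{I}$ by $\rho$ transports it, via $\Stab_X=\rho\,\Stab_{\HJ_g^{I}}\,\rho^{-1}$, to the desired element.

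For the second statement, I would run a standard transfer argument reducing it to the first. Given components $X,Y$ of $\HJ_g$ both containing $\II_g$, transitivity of $\Gamma_g$ on components yields $\gamma\in\Gamma_g$ with $\gamma(Y)=X$. As $\DD_g$ is $\Gamma_g$-invariant, $\gamma(\II_g)$ is again an irreducible component of $\DD_g$, and $\gamma(\II_g)\subset\gamma(Y)=X$; on the other hand $\II_g\subset X$ by hypothesis. Applying the first statement to the two components $\gamma(\II_g)$ and $\II_g$ of $\DD_g$ contained in $X$ produces $\tau\in\Stab_X$ with $\tau(\gamma(\II_g))=\II_g$. Setting $\sigma:=\tau\gamma$, I would then conclude: $\sigma(\II_g)=\II_g$ forces $\sigma\in\Stab_{\II_g}$, while $\sigma(Y)=\tau(\gamma(Y))=\tau(X)=X$ since $\tau\in\Stab_X$, as required.

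The hard part will be the first statement, and more precisely the step of turning the phrase ``his analysis extends in full generality'' into a rigorous assertion. Tsuyumine treats only degenerations into two indecomposable factors, whereas here I need transitivity for the maximal degeneration $g=1+\dots+1$; the content is the combinatorial claim that $S_{2g+2}$, acting on the $2g+2$ branch points, acts transitively on the tree- (alkane-)type labelings classifying the boundary strata of a fixed decomposition type. Establishing this uniformly in the number of factors, rather than only for two factors, is where I expect the real work to lie, whereas the group-theoretic reduction in the second statement is entirely routine once the first is in hand.
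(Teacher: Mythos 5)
Your proposal is correct and takes essentially the same approach as the paper: both deduce the first statement from the extended Tsuyumine transitivity result (which the paper, like you, takes as an input asserted before the lemma rather than reproving it), and your transfer argument for the second statement coincides with the paper's composition $\sigma=\sigma_2\circ\sigma_1$, where $\sigma_1(Y)=X$ and $\sigma_2\in Stab_X$ returns $\sigma_1(\II_g)$ to $\II_g$. The only difference is cosmetic: you make the conjugation $Stab_X=\rho\, Stab_{\HJ_g^{I}}\,\rho^{-1}$ explicit, while the paper invokes the bijection between irreducible components of $\HJ_g$ and of $\calHJ_g(2)$ and states the transitivity of $Stab_X$ on boundary components of type $g=1+\dots+1$ directly.
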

\begin{proof} 
For the first statement, recall that as already discussed in \Cref{sec:hypcomp}, the irreducible components of $\HJ_g$ are in bijection with those of $\calHJ_g(2)$. Hence the stabilizer of $X$ acts transitively on the set of all its boundary components related to a decomposition $ g= 1+\dots+1$. 

For the second statement, since the cover $\HH_g\to\calA_g$ is Galois, being the quotient by $\Gamma_g$, we know that $\Gamma_g$ acts transitively on the set of irreducible components of $\HJ_g$, and thus there exists some $\sigma_1\in\Gamma_g$ such that $X=\sigma_1(Y)$. Denoting $\II'_g:=\sigma_1(\II_g)$ the irreducible component of $\DD_g$ that $\II_g$ is mapped to, by the first statement there exists $\sigma_2\in Stab_X$ such that $\sigma_2(\II'_g)=\II_g$. Thus $\sigma:=\sigma_2\circ \sigma_1$ satisfies $\sigma(\II_g)=\sigma_2(\II'_g)=\II_g$ and maps $Y$ to $X$, as required.
\end{proof}

\section{Expansions of theta functions near the diagonal}\label{sec:expand}
Our main computational tool is working with Taylor expansions of defining equations of our loci near $\II_g$. We will work in a sufficiently small analytic neighborhood~$U$ of~$\II_g$; that is, we will fix arbitrary generic $t_1,\dots,t_g\in \HH_1$, and assume that all $\tau_{ab}$ with $a<b$ satisfy $|\tau_{ab}|<\ep$ for some sufficiently small~$\ep$ (small compared to all $t_i$). Since the diagonal period matrix $\diag(t_1,\dots,t_g)$ lies in the open set~$\HH_g$, so doing this for every $t_1,\dots,t_g$ we get an open neighborhood $\II_g\subset U\subset\HH_g$. We can thus expand theta constants , theta gradients, etc. with respect to all the variables~$\tau_{ab}$ for $1\le a<b\le g$ at a fixed generic point $\diag(t_1,\dots,t_g)\in\II_g$. 

The Taylor expansion of theta constants near~$\II_g$ was recently used in our work~\cite{fagrsm} with H.~Farkas on the Schottky problem, and we now recall it. We also give the formulas for the Taylor expansions of theta gradients near~$\II_g$, and for the Hessian of the theta function. These are the formulas that will make all of our results work, and we introduce various conventions to be able to keep track of the formulas in a reasonable way.

First of all, we recall that by~\eqref{eq:factorize} the theta constant near a diagonal period matrix in~$\II_g$ factorizes. Furthermore, the $z$-derivatives of the theta constant factorize the same way. Thus recalling the heat equation satisfied by theta functions
$$
\frac{\partial\tc\ep\de(\tau,z)}{\partial_{\tau_{jj}}}=\frac{1}{4\pi i}\frac{\partial^2\tc\ep\de(\tau,z)}{\partial z_j\partial z_j};\quad \frac{\partial\tc\ep\de(\tau,z)}{\partial_{\tau_{jk}}}=\frac{1}{2\pi i}\frac{\partial^2\tc\ep\de(\tau,z)}{\partial z_j\partial z_k}
\ \textrm {for\ } j\ne k
$$
allows us to evaluate at a point of~$\II_g$ the derivatives of $\tc\ep\de$ with respect to $\tau$. We will be expanding theta functions and their derivatives in Taylor series near~$\II_g$ with respect to the variables $\tau_{ab}$ for all $1\le a<b\le g$, with each term of the expansion being a function in the variables $\tau_{11},\dots,\tau_{gg}$, which we will denote $t_1,\dots,t_g$, to remember that they are elements of the upper half-plane. To shorten the formulas in the rest of the text, we adopt the following
\begin{conv}\label{conv1}
For $t_j\in\HH_1$ and fixed $\chars\ep\de$ we denote by
$$\vt_j:=\tc{\ep_j}{\de_j}(t_j);\quad \vt_j':=\frac{\partial}{\partial z}\tc{\ep_j}{\de_j}(t_j,z)|_{z=0};\quad \vt_j'':=\dots$$
the one-variable theta functions and their $z$-derivatives evaluated at $z=0\in\CC$ (which may be identically zero depending on the parity of $\chars{\ep_j}{\de_j}$).

We write $O(\ep^N)$ to signify a sum of monomials of total degree at least $N$ in all the variables $\tau_{ab}$, for all $1\le a<b\le g$.

Even with this notation, the formulas, as in~\cite{fagrsm}, would get very complicated, so we introduce further conventions to make them more readable. For a characteristic of the form $\chars\ep\de=\chars{1\dots10\dots0}{1\dots10\dots0}$ with the first $\ell$ columns equal to $\chars11$, we will use capital letters $J,K,\dots$ to denote columns where the characteristic is $\chars11$, i.e.~$1\le J\le \ell$, and we will use small letters $j,k,\dots$ to denote columns where the characteristic is $\chars00$, i.e.~$\ell+1\le j\le g$.

We denote $S_{2n}$ the permutation group on~$2n$ elements, and by $T_{2n}\subset S_{2n}$ the set of permutations that can be written as products of $n$ disjoint transpositions. For $\sigma\in T_{2n}$, we denote by $\mu\subset\sigma$ one of the transpositions $\mu:\alpha_\mu\longleftrightarrow \beta_\mu$ whose product is $\sigma$.

Finally, for a set of an even number of possibly repeating indices $a_1,\dots,a_{2n}\in\{1,\dots,g\}$ we will denote
$$
 [a_1,\dots,a_{2n}]:=\frac{1}{(2\pi i)^n} \sum_{\sigma\in T_{2n}}n_\sigma\prod_{\mu\subset\sigma}\tau_{a_{\alpha_\mu}\, a_{\beta_\mu}}\,,
$$
where the combinatorial coefficient $n_\sigma=a_\sigma\cdot b_\sigma\cdot c_\sigma$ is computed as follows. The factor $a_\sigma$ is equal to $0$ if there exists any $\mu\subset\sigma$ such that $a_{\alpha_\mu}=a_{\beta_\mu}$, and is equal to $1$ otherwise. If there are precisely $N$ elements $\sigma=\sigma_1,\sigma_2,\dots,\sigma_N\in T_{2n}$ such that the resulting monomials are equal, then $b_\sigma$ is set to be equal to $1/N$ --- so that the result is essentially that each distinct monomial is counted exactly once in the sum. Finally, for a given $\mu$ we rewrite
$$\prod_{\mu\subset\sigma} \tau_{a_{\alpha_\mu}\, a_{\beta_\mu}}=\prod_{1\le i\le j\le g}\tau_{ij}^{d_{ij}}$$
by gathering the powers of the same~$\tau_{ij}$ together, and let then $c_\sigma:=1/\prod (d_{ij}!)$. The reason for this last factor is that this is the coefficient with which
$$\prod_{\mu\subset\sigma}\partial_{ \tau_{a_{\alpha_\mu}\, a_{\beta_\mu}}}=\prod_{1\le i\le j\le g}\partial_{\tau_{ij}}^{d_{ij}}$$
appears in the Taylor expansion. Note that $[a_1,\dots,a_{2n}]=O(\ep^n)$, as each summand is a degree~$n$ monomial in the~$\tau$'s.
\end{conv}
To unravel this very useful notation, we give some examples:
$$
 [1,1]=0;\quad[1,2]=\frac{1}{2\pi i}\tau_{12};\quad [1,1,2,3]=\frac{1}{(2\pi i)^2}\tau_{12}\cdot\tau_{13};
$$
$$
 [1,2,3,4]=\frac{1}{(2\pi i)^2}\left(\tau_{12}\cdot\tau_{34}+\tau_{13}\cdot\tau_{24}+\tau_{14}\cdot\tau_{23}\right),
$$
$$
 [1,1,2,2,3,4]=\frac{1}{(2\pi i)^3}\left(\frac12\tau_{12}^2\tau_{34}+\tau_{12}\tau_{13}\tau_{24}+\tau_{12}\tau_{14}\tau_{23}\right)\,.
$$
The reason this notation is so useful for us is that in computing the terms of the expansion of theta functions and their derivatives we use the heat equation repeatedly. Each factor $\tau_{ab}$ arises when the corresponding derivative $\partial_{\tau_{ab}}$ is taken, so then $\vt_{a}$ and $\vt_{b}$ are differentiated, by the heat equation. Each summand of the Taylor expansion of the theta constant in variables $\tau_{ab}$ for all $1\le a<b\le g$ is thus of the form $\prod_{\alpha=1\dots g} \left(\partial^{n_\alpha}\vt_\alpha\right)$ times the following polynomial in $\tau$'s:
$$
[ \underbrace{1,\dots,1}_{n_1},\underbrace{2,\dots,2}_{n_2},\dots,\underbrace{g,\dots,g}_{n_g}]
$$
in our new notation. For the expansion of the derivative $\partial_{\tau_{ab}}\theta_m$, the polynomial multiplying $\prod_{\alpha=1\dots g} \partial^{n_\alpha}\vt_\alpha$ is similar, except that $a$ and $b$ will be included in the expression $n_a-1$ and $n_b-1$ times, respectively.

As a warm-up, we write down in genus~$4$ the expansion of $\theta_m=\tc{1100}{1100}$ near $\II_4$, using this notation.
\begin{equation}
\begin{aligned}
&\theta_m=\theta_m(\diag(t_1,t_2,t_3,t_4))+\sum_{1\le a<b\le g}\tau_{ab}\frac{\partial\theta_m}{\partial \tau_{ab}}(\diag(t_1,t_2,t_3,t_4))+\dots\\
&=\vt_1'\vt_2'\vt_3\vt_4\\&\cdot\left([1,2]+\frac{\vt_1'''}{\vt_1'}[1,1,1,2]+\frac{\vt_2'''}{\vt_2'}[2,2,1,2]
+\frac{\vt_3''}{\vt_3}[3,3,1,2]+\frac{\vt_4''}{\vt_4}[4,4,1,2]\right)+O(\ep^3)\,.
\end{aligned}
\end{equation}
Of course all of the terms above can easily be written out explicitly, the terms $[1,1,1,2]$ and $[2,2,1,2]$ are in fact zero, but note that our conventions make the formula readable. We note in particular that the lowest order term of this expansion is $[1,2]=O(\ep)$, while in general for $\chars\ep\de\in\calE_\ell$, the lowest order term of the expansion would be of order $O(\ep^{\ell/2})$. From now on, we denote $s:=\ell/2$, for even characteristics.
\begin{conv}\label{conv:phi}

To make the formulas still nicer, we finally denote, for a given $\ell$ with $1\le\ell\le g$ (for thinking about even characteristics in~$\calE_\ell$ or odd characteristics in $\calO_\ell$)
$$
 f_\alpha:=\begin{cases}
 \vt_\alpha', & \mbox{if } 1\le\alpha\le\ell \\
 \vt_\alpha, & \mbox{if } \ell+1\le\alpha\le g.
 \end{cases}
$$
and denote
$$
 \phi_\alpha:=\frac{f_\alpha''}{f_\alpha};\qquad \psi_\alpha:=\frac{f_\alpha''''}{f_\alpha}-\phi_\alpha^2\,,
$$
where we now use the index $1\le \alpha\le g$.
\end{conv}

We are now ready to compute the two lowest order terms of the expansion of the theta constant with characteristics $\chars{1\dots10\dots0}{1\dots10\dots0}$ for arbitrary $g$ and arbitrary $2\le \ell=2s\le g$:
\begin{equation}\label{eq:thetaexpand}
\theta_m=\left([1,\dots,\ell]+\sum_\alpha \phi_\alpha\cdot[\alpha,\alpha,1,\dots,\ell]\right)\cdot\prod_\alpha f_\alpha+O(\ep^{s+2})
\end{equation}
For further use, we denote
\begin{equation}\label{eq:Xell}
X_\ell:=[1,\dots,\ell];\quad Y_\ell:=\sum_\alpha \phi_\alpha\cdot[\alpha,\alpha,1,\dots,\ell]
\end{equation}
these two leading terms (for $\ell$ of any parity), so that the above becomes $\theta_m=X_\ell+Y_\ell+O(\ep^{s+2})$.

\smallskip
\noindent Similarly, we can obtain formulas for the expansions of the derivatives, where recall we use indices $1\le J,K\le \ell$ and $\ell+1\le j,k\le g$. We first deal with the $z$-derivatives, to be used in our investigation of the locus $\vgn$. In this case the expansion of the theta gradient $\grad\tc{1\dots 10\dots 0}{1\dots 10\dots 0}$ where the characteristic lies in $\calO_\ell$, i.e. has an odd number $\ell=2s+1$ of columns equal to $\chars11$, is
\begin{equation}\label{eq:expgrad1}
\frac{\partial\tc\ep\de}{\partial z_J}(\tau)=\left([1,\dots,\widehat J,\dots,\ell]+\sum_\alpha \phi_\alpha\cdot [\alpha,\alpha,1,\dots,\widehat J,\dots,\ell]\right)\prod f_\alpha +O(\ep^{s+2})
\end{equation}
and
\begin{equation}\label{eq:expgrad2}
\begin{aligned}
\frac{\partial\tc\ep\de}{\partial z_j}(\tau)&=\left(\phi_j\cdot[j,1,\dots,\ell]+\psi_j\cdot[j,j,j,1,\dots,\ell]\right.\\ &\left.+\phi_j\sum_\alpha \phi_\alpha\cdot [\alpha,\alpha,j,1,\dots,\ell]\right)\prod f_\alpha +O(\ep^{s+3})\,,
\end{aligned}
\end{equation}
where as usual the hat denotes omission of the index. In what follows we will actually only need to use the formulas above for $\ell=3$, but setting $\ell=3$ does not simplify the formula above much, so we have chosen to give the general expression.

\smallskip
For investigating the rank of the Hessian of the theta function we will need to compute the second order derivatives of the theta function. While these formulas can also be written for arbitrary~$\ell$, in our arguments we will only need $\ell=2$, and here this makes the formulas much shorter. So from now on we take $m=m_0=\chars{110\dots0}{110\dots0}$, and first compute for $J=1,2$
$$
\partial_{\tau_{JJ}}\theta_{m_0}=\frac{\prod f_\alpha}2\cdot\left(\phi_J\cdot X_2+\phi_J\cdot Y_2+\psi_J\cdot X_2\right)+O(\ep^3)\,,
$$
where the extra factor of $\frac12$ is due to $\frac{1}{4\pi i}$ appearing instead of $\frac{1}{2\pi i}$ in the heat equation, for differentiating with respect to~$\tau_{JJ}$. Next, we compute
$$
\partial_{\tau_{12}}\theta_{m_0}=\prod f_\alpha\cdot\left(1+\sum\phi_\alpha\cdot[\alpha,\alpha]+\psi_1\cdot [1,1]+\psi_2\cdot [2,2]\right)+O(\ep^2)\,;
$$
$$
 \partial_{\tau_{1j}}\theta_m=\phi_j\cdot [j,2]\cdot\prod_\alpha f_\alpha+O(\ep^2)\,;
$$
$$
 \partial_{\tau_{2j}}\theta_m=\phi_j\cdot [j,1]\cdot\prod_\alpha f_\alpha+O(\ep^2)\,;
$$
$$
 \partial_{\tau_{jk}}\theta_m=\phi_j\cdot\phi_k\cdot [j,k,1,2]\cdot\prod_\alpha f_\alpha +O(\ep^3)\,,
$$
and finally
$$
\begin{aligned}
 \partial_{\tau_{jj}}\theta_m&=\frac{\prod f_\alpha}2\cdot\left([1,2]+\psi_j\cdot [j,j,1,2]+\phi_j\cdot \sum_\alpha\phi_\alpha\cdot [\alpha,\alpha,1,2]\right)+O(\ep^3)\\
 &=\frac{\prod f_\alpha}2\cdot\left(\phi_j\cdot (X_2+Y_2)+\psi_j\cdot [j,j,1,2]\right)+O(\ep^3)\,,
\end{aligned}
$$
where of course in each of these formulas further terms in the expansion can also be easily written down, though the formulas become very lengthy.

We give an example: the principal $4\times 4$ minor of the Hessian of $\theta_{m_0}$, formed by the rows and columns numbered $1,2,j,k$ (for $3\le j<k\le g$) is given by $\prod f_\alpha$ multiplied by
\begin{equation}\label{hess2}
\begin{smallmatrix}
\frac12\phi_1\cdot (X_2+Y_2)&1+\sum_{\alpha,\beta}\phi_\alpha\phi_\beta\cdot[\alpha,\alpha,\beta,\beta]&\phi_j\cdot[2,j]
+\phi_j\sum\phi_\alpha\cdot[\alpha,\alpha,2,j]&\phi_k\cdot[2,k]+\phi_k\cdot\sum\phi_\alpha\cdot[\alpha,\alpha,2,k]\\
*&\frac12\phi_2\cdot (X_2+Y_2) &\phi_j\cdot[1,j]+\phi_j\sum\phi_\alpha\cdot[\alpha,\alpha,1,j]&\phi_k\cdot[1,k]+\phi_k\sum\phi_\alpha\cdot[\alpha,\alpha,1,k]\\
*&*&\frac12\phi_j\cdot (X_2+Y_2)+\frac12\psi_j\cdot[j,j,1,2]&\phi_j\cdot\phi_k\cdot[j,k,1,2]\\
*&*&*&\frac12\phi_k\cdot (X_2+Y_2)+\frac12\psi_k\cdot[k,k,1,2]
\end{smallmatrix}
\end{equation}
where the $*$'s below the diagonal signify the fact that the matrix is symmetric, and we have expanded the matrix up to dropping the $\ep^3$ terms.

\section{The local form of the hyperelliptic locus}\label{sec:hyp}
In this section we describe the hyperelliptic locus near $\II_g$, proving~\Cref{thm:hypfull}, which is a slightly stronger version of~\Cref{thm:hypsmall}.
We first check that using the special fundamental system~\eqref{eq:specsystem}, the resulting irreducible component $\HJ_g^{I}$ of $\HJ_g\subset\HH_g$ given by~\Cref{prop:hyponecomponent} contains the locus~$\II_g$ of diagonal period matrices. This follows from the following
\begin{lm}\label{lm:vanish}
If $m\in\calE$ can be written as a sum of strictly less than~$g$ among the characteristics $o_1,\dots,o_g,e_2,\dots,e_{g+2}$ of the special fundamental system~\eqref{eq:specsystem}, then $m+b^g\in\calE^*$.
\end{lm}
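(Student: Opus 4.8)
The plan is to reduce the statement to a purely combinatorial fact about the occurrence of a single column equal to $\chars11$, and then to prove that fact by a short counting argument whose only input is the inequality $k<g$.

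First I would make both sides explicit. Write $m=\sum_{i\in A}o_i+\sum_{c\in B}e_c$ with $A\subseteq\{1,\dots,g\}$ and $B\subseteq\{2,\dots,g+2\}$, where $|A|+|B|=k<g$. Since $b^g=o_1+\dots+o_g$, the characteristics $o_i$ with $i\in A$ cancel and
$$m+b^g=\sum_{i\notin A}o_i+\sum_{c\in B}e_c,$$
a sum of $\alpha:=g-|A|$ odd characteristics and $\beta:=|B|$ even ones; the hypothesis $k<g$ is exactly the inequality $\alpha>\beta$. Since an even characteristic $\chars\ep\de$ has $\langle\ep,\de\rangle$ even, and evenness of $m+b^g$ is recorded in the parity count made before \Cref{prop:hyponecomponent}, it will suffice to exhibit a single column $\chars11$, i.e.\ to show $\langle\ep,\de\rangle\ge1$.

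Next I would read off from \eqref{eq:specsystem} the ``staircase'' shape $o_i=\chars{\mathbf e_i}{u_i}$ and $e_c=\chars{\mathbf e_{c-1}}{u_{c-2}}$, where $\mathbf e_i$ is the $i$-th standard basis vector (with $\mathbf e_{g+1}:=0$) and $u_s$ denotes the vector with $1$'s in coordinates $1,\dots,s$. Writing $x_a=[\,o_a\text{ occurs in }m+b^g\,]$ and $y_c=[\,e_c\text{ occurs}\,]$, and counting the markers that contribute to the top row and the covering intervals that contribute to the bottom row, one gets for $m+b^g=\chars\ep\de$
$$\ep_a=x_a+y_{a+1},\qquad \de_a=\sum_{b\ge a}(x_b+y_{b+2})\pmod2 .$$
Substituting $x_b=\ep_b+y_{b+1}$ and telescoping the resulting sum of the $y$'s yields the clean identity
$$\de_a=\sum_{b\ge a}\ep_b+y_{a+1}+y_{g+2}\pmod 2,$$
which is the engine of the argument.

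The main step is a proof by contradiction. Suppose $m+b^g$ has no column $\chars11$, i.e.\ the supports of $\ep$ and $\de$ are disjoint. List the positions with $\ep_a=1$ in decreasing order $a_1>\dots>a_r$; then $\sum_{b\ge a_j}\ep_b\equiv j$, so disjointness forces $\de_{a_j}=0$ and hence $y_{a_j+1}\equiv j+y_{g+2}\pmod2$. Thus $y_{a_j+1}$ alternates with $j$, so the positions $a_j$ split in strictly alternating fashion into those with $x_{a_j}=1$ (arising from an $o_{a_j}$) and those with $x_{a_j}=0$ (which then force $y_{a_j+1}=1$, i.e.\ an occurring $e_{a_j+1}$). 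Counting the two types, and noting that each position with $x_a=y_{a+1}=1$ supplies a further distinct occurring $e_{a+1}$, gives $\alpha\le\beta$ — contradicting $\alpha>\beta$. Hence $m+b^g$ has a column $\chars11$, and being even it lies in $\calE^*$. The only genuine obstacle is this final counting step: keeping the alternation bookkeeping correct, and correctly tracking the boundary contribution $y_{g+2}$ coming from $e_{g+2}$, the one member of the system with zero top row and hence no marker. The decoding of \eqref{eq:specsystem}, the telescoping identity, and the equivalence $k<g\Leftrightarrow\alpha>\beta$ are all routine.
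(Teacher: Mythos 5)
Your proposal is correct, but it proves the lemma by a genuinely different argument than the paper. The paper proceeds by induction on $g$: deleting $o_g$ and $e_{g+2}$ from the system and forgetting the last column of every characteristic yields the genus $g-1$ special fundamental system; since the last column of $b^g$ equals $\chars{1}{1}$ and the only members of the system whose last column is non-zero are $o_g$, $e_{g+1}$, $e_{g+2}$, either $m+b^g$ visibly has last column $\chars{1}{1}$, or a short case analysis on which of these three characteristics enter the sum for $m$ reduces the statement to genus $g-1$. You instead argue non-inductively and in closed form: cancelling the $o_i$, $i\in A$, against $b^g$ translates $k<g$ into $\alpha>\beta$, and your telescoped identity $\de_a\equiv\sum_{b\ge a}\ep_b+y_{a+1}+y_{g+2}\pmod 2$ turns the assumption that $\ep$ and $\de$ have disjoint supports into the strict alternation, along the support of $\ep$, between positions contributed by an occurring $o_{a_j}$ and positions forcing an occurring $e_{a_j+1}$; together with the distinct occurring $e_{a+1}$ attached to each $a$ with $x_a=y_{a+1}=1$, this yields $\alpha\le\beta$, a contradiction. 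I checked the boundary bookkeeping you flag as the one delicate point, and it does close: if $y_{g+2}=0$ the alternation begins on the $e$-side, so the $e$-type positions dominate; if $y_{g+2}=1$ the $o$-type positions may exceed the $e$-type ones by one, but then $e_{g+2}$ itself is an occurring even characteristic distinct from every $e_{a+1}$ with $a\le g$, restoring $\alpha\le\beta$. The paper's induction is shorter and exploits the recursive structure of the special fundamental system; your computation is longer but self-contained, gives an explicit formula for the columns of $m+b^g$, and shows directly why $k<g$ is precisely the threshold that forces a $\chars{1}{1}$ column. Note that both arguments really prove the stronger assertion that $m+b^g$ has a $\chars{1}{1}$ column, with evenness of $m+b^g$ supplied separately, which is exactly how the lemma is applied.
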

\begin{proof}
We proceed by induction on $g$, with the lemma being easily true for $g=1, 2$. Then observe that deleting the characteristics $o_g$ and $e_{g+2}$ from the chosen special fundamental system, and forgetting the $g$-th column of each characteristic gives the special fundamental system of genus $g-1$. Note now that the last column of $b^g$ is equal to $\chars11$. Thus unless the sum for $m$ includes one of the characteristics that have a non-zero $g$'th column, the characteristic $m+b^g$ also has the last column $\chars11$, and thus does not lie in $\calE_0$. The only three characteristics among the special fundamental system that have a non-zero $g$'th column are $o_g,e_{g+1},e_{g+2}$. If $m$ is a sum of characteristics including $o_g$, then we observe that $b^g+o_g= b^{g-1}\oplus\chars{0}{0}$, and thus the statement follows from the inductive assumption, by ignoring the $g$'th column of characteristics (since if any of the first $g-1$ columns of a genus~$g$ characteristic is equal to $\chars11$, it is not in $\calE_0$, and we are now using one less characteristic in the sum for $m$). If $m$ is a sum of characteristics not including $o_g$, but including exactly one of $e_{g+1}$ or $e_{g+2}$, we note that $b^g+e_{g+1}= b^{g-1}\oplus\chars{0}{1}$ and $b^g+e_{g+2}= b^{g-1}\oplus\chars{1}{0}$, and the same argument applies as for the previous case of~$o_g$. Finally, if both $e_{g+1}$ and $e_{g+2}$ are used in this sum representation, we note that since the bottom characteristic of $e_{g+1}+e_{g+2}$ is equal to zero vector in $\ZZ_2^g$, we can again proceed by induction.
\end{proof}
\begin{cor}
All theta constants $\theta_{m+b^g}$ vanishing along the hyperelliptic component $\HJ_g^{I}$ also vanish along~$\II_g$.
\end{cor}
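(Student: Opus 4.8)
My plan is to reduce the claimed vanishing to the combinatorial statement already proved in \Cref{lm:vanish}. By \Cref{prop:hyponecomponent} the component $\HJ_g^{I}$ is cut out by the equations $\theta_{m+b^g}=0$, where $m$ ranges over all sums of strictly fewer than $g$ of the characteristics $o_1,\dots,o_g,e_2,\dots,e_{g+2}$; hence it is enough to show that each such $\theta_{m+b^g}$ restricts identically to zero on $\II_g$. The whole point will be to rephrase ``vanishes on $\II_g$'' as a condition on the columns of the characteristic $m+b^g$.

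First I would record the restriction criterion on the diagonal. Applying the factorization \eqref{eq:factorize} recursively, for $\diag(t_1,\dots,t_g)\in\II_g$ one has $\theta_n(\diag(t_1,\dots,t_g))=\prod_{a=1}^g\tc{\ep_a}{\de_a}(t_a)$. Since the unique odd genus-one characteristic is $\chars11$, with $\tc11\equiv 0$, this product is identically zero in generic $t_1,\dots,t_g$ exactly when some column of $n$ equals $\chars11$, i.e.\ when $n\notin\calE_0$. This is nothing but the column-wise form of the reformulation of \Cref{cor:diagonal}, that $\II_g\subset\tnm$ for every $m\in\calE^*$. The statement thus comes down to verifying that $m+b^g\notin\calE_0$ for every admissible $m$.

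That verification is exactly what \Cref{lm:vanish} supplies: its inductive argument shows that for every $m$ which is a sum of fewer than $g$ of the chosen characteristics, $m+b^g$ has a column equal to $\chars11$, i.e.\ $m+b^g\notin\calE_0$ (the conclusion being phrased as $m+b^g\in\calE^*$ when $m+b^g$ is even, while for odd $m+b^g$ the theta constant $\theta_{m+b^g}$ vanishes identically and its equation is vacuous). Combined with the criterion of the previous paragraph, this gives $\theta_{m+b^g}|_{\II_g}\equiv 0$ in every case. I do not expect a genuine obstacle at this stage: all the combinatorial content has already been carried out in the proof of \Cref{lm:vanish}, and the only thing this argument adds is the dictionary between membership in $\calE^*$ and vanishing on the diagonal, furnished by \eqref{eq:factorize} and \Cref{cor:diagonal}, together with the harmless observation that the odd-index equations impose no condition.
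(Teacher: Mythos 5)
Your proposal is correct and is precisely the argument the paper intends: the corollary is stated as an immediate consequence of \Cref{lm:vanish}, obtained by combining it with \Cref{prop:hyponecomponent} and the diagonal factorization criterion of \eqref{eq:factorize}/\Cref{cor:diagonal}, exactly as you do. Your explicit remark that the equations with $m+b^g$ odd are vacuous (since those theta constants vanish identically) is a worthwhile clarification of a point the paper's phrasing of \Cref{lm:vanish} glosses over, but it does not change the route.
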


We are now ready to prove the most precise version of our result on the hyperelliptic locus.
\begin{thm}\label{thm:hypfull}
For any irreducible component $\XX$ of the hyperelliptic locus $\HJ_g\subset\HH_g$, such that $\XX\supset\II_g$, the tangent space to~$\XX$ at any point of $\II_g$ is the set of period matrices satisfying the set of equations $\lbrace\tau_{\pi(i)\pi(j)}= 0\rbrace_{\forall 1\le i,j\le g, |i-j|>1}$, where $\pi\in S_g$ is the permutation that is the image in $S_g$ of the element $\sigma\in Stab_{\II_g}$ that sends $\HJ_g^I$ to $\XX$.
\end{thm}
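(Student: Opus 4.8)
The plan is to reduce to the single explicit component $\HJ_g^{I}$, compute its tangent space along $\II_g$ directly from the expansion~\eqref{eq:thetaexpand}, and close with a dimension count. For the reduction, given a component $\XX\supset\II_g$, \Cref{lm:tsu} provides $\sigma\in\Stab_{\II_g}$ with $\sigma(\HJ_g^{I})=\XX$, and by \Cref{prop:stabdiagonal} we may write $\sigma\in\Gamma_1\wr S_g$ with image $\pi\in S_g$. The differential of the biholomorphism $\sigma\colon\HH_g\to\HH_g$ at a point of $\II_g$ carries $T\HJ_g^{I}$ to $T\XX$; here the $(\Gamma_1)^{\times g}$-factor preserves $\II_g$ and its linearization acts diagonally on the off-diagonal coordinates $\{\tau_{ab}\}_{a<b}$, rescaling each by a nonzero factor without mixing distinct directions, while the $S_g$-factor sends the direction $\tau_{ij}$ to $\tau_{\pi(i)\pi(j)}$. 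Thus it suffices to prove that $T_{\II_g}\HJ_g^{I}$ is the tridiagonal locus $\{\tau_{ij}=0:|i-j|>1\}$; applying $\pi$ then gives the statement for $\XX$.

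To compute $T_{\II_g}\HJ_g^{I}$, I would use \Cref{prop:hyponecomponent}: $\HJ_g^{I}$ is a component of the locus $V$ cut out by $\theta_{m+b^g}=0$, over all $m$ that are sums of strictly fewer than $g$ of $o_1,\dots,o_g,e_2,\dots,e_{g+2}$. By \Cref{lm:vanish} each $m+b^g$ lies in $\calE^*$, so by~\eqref{eq:thetaexpand} its leading term has order $O(\ep^{s})$ with $s=\langle\ep,\de\rangle/2\ge 1$. Hence at a point of $\II_g$ the differential $d\theta_{m+b^g}$ vanishes unless $m+b^g\in\calE_2$, in which case it is a nonzero multiple of $d\tau_{ab}$, where $a,b$ are the positions of the two $\chars11$-columns; the coefficient $\vt_a'\vt_b'\prod_{c\neq a,b}\vt_c$ is nonzero at \emph{every} point of $\II_g$ because genus-one even theta constants and the odd theta gradient never vanish on $\HH_1$. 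Therefore $T_{\II_g}V$ is the intersection of the hyperplanes $\{\tau_{ab}=0\}$ over those pairs $(a,b)$ realized by some $\calE_2$ defining characteristic, and in particular the computation is valid at every, not merely the generic, point of $\II_g$.

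The combinatorial heart is to show that every non-adjacent pair $(a,b)$, $|a-b|>1$, is realized: there is a sum $m$ of strictly fewer than $g$ of the characteristics in~\eqref{eq:specsystem} with $m+b^g\in\calE_2$ and $\chars11$-columns exactly at $a$ and $b$. One has considerable freedom here, since the remaining $g-2$ columns of the target may be any even columns, and I expect this to follow by an induction on $g$ parallel to the proof of \Cref{lm:vanish} --- deleting $o_g,e_{g+2}$ and the last column --- while tracking the representation length $k$ through the congruence $k\equiv g$ or $g+1\pmod 4$ that governs the evenness of $m+b^g$. Exhibiting, for each non-adjacent pair, an explicit admissible $m$ of length $k<g$ (in particular steering clear of the excluded length $k=g$) is the main obstacle, as it requires a careful organization of this congruence bookkeeping against the explicit structure of the special fundamental system. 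Granting it, $T_{\II_g}V\subseteq\{\tau_{ab}=0:|a-b|>1\}$, a space of dimension $2g-1$.

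Finally I would close by a dimension count. Since $\calHJ_g$ has dimension $2g-1$, so does its component $\HJ_g^{I}$, and the inclusions $\II_g\subset\HJ_g^{I}\subseteq V$ give
$2g-1=\dim\HJ_g^{I}\le\dim T_{\II_g}\HJ_g^{I}\le\dim T_{\II_g}V\le 2g-1$.
Hence all inequalities are equalities, $T_{\II_g}\HJ_g^{I}=T_{\II_g}V$ equals the tridiagonal locus, and $\HJ_g^{I}$ is moreover smooth along $\II_g$. (In particular the count forces that no edge pair $|a-b|=1$ is realized, since any further independent linear equation would drop the dimension below $2g-1$.) Combined with the first paragraph, this yields the description of $T_p\XX$ claimed in \Cref{thm:hypfull} for every component $\XX\supset\II_g$ and every $p\in\II_g$.
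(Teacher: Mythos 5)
Your proposal follows the same architecture as the paper's proof: reduce to the explicit component $\HJ_g^{I}$ via \Cref{lm:tsu} and the linearized action of $Stab_{\II_g}$ (the $(\Gamma_1)^{\times g}$-factor rescales each off-diagonal coordinate without mixing, the $S_g$-factor permutes them), read off the tangent space from the linear terms of the defining theta constants via \eqref{eq:thetaexpand}, and close with the dimension count $2g-1=\dim\HJ_g^{I}\le\dim T_{\II_g}\HJ_g^{I}\le 2g-1$. Your remark that the coefficients $\vt_a'\vt_b'\prod_{c\ne a,b}\vt_c$ are nonzero at \emph{every} point of $\II_g$ (genus-one even theta constants and the odd theta gradient never vanish) is correct and is exactly what justifies the ``any point'' formulation.

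However, there is a genuine gap at what you yourself call the combinatorial heart, which you only conjecture (``I expect this to follow by an induction\dots Granting it\dots''). You never exhibit, for a given pair $(a,b)$ with $|a-b|>1$, a sum $m$ of strictly fewer than $g$ characteristics of \eqref{eq:specsystem} with $m+b^g\in\calE_2$ having its two $\chars11$ columns at $a$ and $b$; without this, the inclusion $T_{\II_g}V\subseteq\lbrace\tau_{ab}=0\colon|a-b|>1\rbrace$ is unproven and the dimension count cannot even begin. The paper fills this step in one line, with no induction and none of the congruence bookkeeping you anticipate: for any $a<c<b$ take $m:=o_1+\dots+\widehat{o_a}+\dots+\widehat{o_c}+\dots+\widehat{o_b}+\dots+o_g$, a sum of $g-3<g$ of the $o_i$'s (so $\theta_{m+b^g}$ is among the defining equations of \Cref{prop:hyponecomponent}), and note that $m+b^g=o_a+o_c+o_b$. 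A direct computation of the columns from \eqref{eq:specsystem} shows that $o_a+o_c+o_b$ lies in $\calE_2$ with $\chars11$ columns exactly at positions $a$ and $b$ (the column at $c$ is $\chars10$, the rest are even), so its linear term is $\tfrac{1}{2\pi i}\tau_{ab}\vt_a'\vt_b'\prod_{h\ne a,b}\vt_h$ as in \eqref{eq:linearterm}. Since an intermediate index $c$ exists precisely when $|a-b|>1$, every non-adjacent pair is realized, and the rest of your argument then goes through as written.
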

\begin{proof}
We first use \Cref{lm:vanish} to prove~\Cref{thm:hypsmall}, showing that the tangent space to $\HJ^I_g$ at any point of $\II_g$ is given by equations $\tau_{jk}=0$ for all $|j-k|>1$.

Indeed, recall that $b^g=o_1+\dots+o_g$, and consider a characteristic of the form
$$
 m=b^g+o_1+\dots+\widehat {o_{j_1}}+\dots+\widehat {o_{j_2}}+\dots+\widehat {o_{j_3}}+\dots+o_g=o_{j_1}+o_{j_2}+o_{j_3}\,,
$$
with $1\le j_1<j_2< j_3\le g$. From the first expression for~$m$ it follows that $\theta_m$ vanishes identically on $\HJ^I_g$. Writing the columns of~$m$ as $\chars{\ep_i}{\de_i}$, we compute that 
$$
 \ep_{j_1}=\ep_{j_2}=\ep_{j_3}=\de_{j_1}=\de_{j_3}=1
$$
and that all other $\ep_i$ and $\de_i$ are zero (note that this includes $\de_{j_2}=0$). Thus $m\in\calE_2$, in agreement with~\Cref{lm:vanish}, and in particular the lowest order term of the expansion of $\theta_m$ near $\II_g$ is linear, given explicitly by~\eqref{eq:thetaexpand} (where we use $\ell=2$, see the genus 4 example there) as
\begin{equation}\label{eq:linearterm}
 \theta_m(\tau)=\frac{1}{2\pi i}\tau_{j_1j_3}\vt_{j_1}'\vt_{j_3}'\prod_{h\ne j_1,j_3}\vt_h
\end{equation}
in our conventions. Going over all possible choices of $1\le j_1<j_2<j_3\le g$ means that in the above expressions there appear all $\tau_{j_1j_3}$ such that $j_3-j_1>1$. Recalling that we are working within the space of symmetric matrices, it follows that the tangent space to $\HJ^I$ at a point in~$\II_g$ is contained in the locus of tridiagonal matrices given by equations $\lbrace\tau_{jk}=0\rbrace_{\forall |k-j|>1}$. Since the hyperelliptic locus $\calHJ_g$ is of dimension $2g-1$, any irreducible component of its preimage $\HJ_g$ is also of dimension $2g-1$. Thus the tangent space to $\HJ^I_g$ at any point of $\II_g$ must be of dimension at least $2g-1$. Since this tangent space is contained in the space of tridiagonal matrices, which also has dimension $2g-1$, it must be equal to it, and~\Cref{thm:hypsmall} is thus proven.

\smallskip
We now combine this with the study of the action of $\Gamma_g$ on irreducible components of $\HJ_g$ to obtain the full statement. By~\Cref{lm:tsu}, for any irreducible component $\XX$ of $\HJ_g$ containing $\II_g$, there exists $\sigma\in Stab_{\II_g}$ such that $\sigma(\HJ^I_g)=\XX$. Thus the tangent space to $\XX$ is the image of the tangent space to $\HJ^I_g$ under the action of~$\sigma$. Since clearly the action of $\Gamma_1$ on each column of the characteristic does not change the linear term of the expansion~\eqref{eq:linearterm}, it follows that the action of $\sigma$ on the linear equations for the tangent space to $\HJ^I_g$ along $\II_g$ is simply by permuting the columns of the period matrix according to the image of the permutation $\sigma$ under the surjection $Stab_{\II_g}\to S_g$.
\end{proof}
\begin{rem}
From the proof of the theorem we see that multiple irreducible components of~$\HJ_g$ contain~$\II_g$, and many of them may have the same tangent space along~$\II_g$. This can already be seen in the first interesting case of~$g=3$. Recall that $\HJ_3$ has 36 irreducible components, each one being the zero locus of one of the 36 even genus 3 theta constants. Nine of these 36 components, those with characteristics in $\calE^*=\calE_2$, contain $\II_3$: namely these correspond to characteristics 
$$ 
 \chars{110 }{110 }, \,\, \chars{111 }{110 }, \,\, \chars{110 }{111 },\,\, \chars{101 }{101 }, \,\, \chars{111 }{101 }, \,\, \chars{101 }{111 },\,\, \chars{011 }{011 }, \,\, \chars{111 }{011 }, \,\, \chars{011 }{111 }\,.
$$
The irreducible components of $\HJ_3$ corresponding to each of the first three characteristics have local equation $\tau_{12}=0$ near $\II_3$; the next three components have local equation $\tau_{13}=0$, and the final three cases have local equation $\tau_{23}=0$.

For the next, more interesting, case of $g=4$, a similar analysis can be performed, noting that here 10 even theta constants vanish on every irreducible component of $\HJ_4$. As a side remark, notice that no matter how the columns are permuted, the three local defining equations for the tangent space of a component of $\HJ_4$ near $\II_4$ cannot be of the form $ \tau_{12}= \tau_{13}= \tau_{14}=0$. Indeed, this corresponds to the fact that the locus of products $\HH_1\times\HH_3$ (which locally along $\II_g$ is given by precisely these three equations) is not contained in $\HJ_4$, as $\HJ_3\subsetneq\HH_3$.

In arbitrary genus, we see that the only element of $S_g$ that fixes the set of equations $\lbrace \tau_{jk}=0\rbrace_{\forall\, |j-k|>1}$, is the product of transpositions $\pi= ( 1,g) ( 2, g-1)\dots ( \frac{g+3}{2}, \frac{g}{2})$. Thus there are altogether $g!/2$ possible different tangent spaces along $\II_g$ to the different irreducible components of $\HJ_g$ containing $\II_g$.
\end{rem}

\section{The theta-null divisor and the vanishing gradient locus}\label{sec:gn}
We now proceed to investigate the loci $\vtn$ and $\vgn$ locally near~$\calD_g$. The method we use here, and then also for dealing with the Hessian rank loci, will be different, and more general, than what we have done for the hyperelliptic locus. 

The outline of our argument is as follows. First we use the expansion of theta functions near $\II_g$, as computed in~\Cref{sec:expand}, to determine the dimensions of the tangent spaces near $\II_g$ to the loci given by equations $\theta_{m_0}(z)=0$ (resp.~$\grad\theta_{m_0}(z)=0$), where $m_0$ is the simplest even (resp.~odd) characteristic, i.e.~a characteristic that lies in $\calE_2$ (resp.~$\calO_3$) for which the expansions are given. This is done by intersecting the Taylor expansions of the defining equations with a suitable ``transverse'' subvariety, to simplify computations. 

As one can already see from the formulas in \Cref{sec:expand}, and from the proofs of these statements, such expansions near $\II_g$ for characteristics in $\calE_\ell$ or $\calO_\ell$ with $\ell\gg 2$ can become very complicated --- essentially as the corresponding locus contains the diagonal $\II_g$ with high multiplicity. Thus to deal with arbitrary characteristics, which is necessary to understand the loci $\vtn,\vgn\subset\calA_g$, we will act by $\Gamma_g$, and will have to use the fact that the loci of interest contain the ``big'' diagonal $\LL_g^e$ or $\LL_g^o$ defined below, which consists of block-diagonal matrices that have two to four $1\times 1$ blocks, and the remaining blocks are $2\times 2$. Since the setwise stabilizer $Stab_{\LL_g}$ contains suitable subgroups of $G_g$, which by \Cref{lm:orbita} permute all even (resp.~odd) characteristics, it will turn out that the statement for an arbitrary characteristic~$m$ will reduce to the statement for~$m_0$ --- and \Cref{prop:maxcomponent} is a general statement to this effect. 

\subsection{Local structure of $\tnmz$ and $\gnmz$ near the diagonal}
We will start by explicit computations for the characteristics in $\calE_2$ and $\calO_3$. The even case is straightforward, using the first term of the Taylor expansion.
\begin{prop}
Denote $m_0:=\chars{110\dots0}{110\dots0}\in\calE_2$. Then in a sufficiently small neighborhood of $\II_g$, the locus 
$$
 \tnmz=\lbrace \tau\in\HH_g\colon \theta_{m_0}(\tau)=0\rbrace
$$
is smooth, of codimension~$1$ in~$\HH_g$.
\end{prop}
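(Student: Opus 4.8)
The plan is to read off the statement directly from the leading-order term of the Taylor expansion \eqref{eq:thetaexpand}, specialized to $\ell=2$, $s=1$. For $m_0=\chars{110\dots0}{110\dots0}\in\calE_2$ the expansion gives
$$
\theta_{m_0}(\tau)=\left(X_2+Y_2\right)\prod_\alpha f_\alpha+O(\ep^3),
$$
where $X_2=[1,2]=\tfrac{1}{2\pi i}\tau_{12}$ and $Y_2=\sum_\alpha\phi_\alpha[\alpha,\alpha,1,2]$. Since $[\alpha,\alpha,1,2]=O(\ep^2)$ (each such bracket is a degree-$2$ monomial), the term $Y_2$ and the error $O(\ep^3)$ are both of order $\ge 2$ in the off-diagonal variables, while $X_2=\tfrac{1}{2\pi i}\tau_{12}$ is genuinely linear. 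Here $\prod_\alpha f_\alpha=\vt_1'\vt_2'\prod_{\alpha\ge 3}\vt_\alpha$, and the key point is that at a \emph{generic} point $\diag(t_1,\dots,t_g)\in\II_g$ this product is nonzero: $\vt_\alpha=\tc{0}{0}(t_\alpha)\ne0$ since $t_\alpha$ is generic and does not lie on the theta-null divisor in $\HH_1$, and $\vt_1',\vt_2'$ are nonzero because the odd one-variable theta function has a simple zero at the origin, so its $z$-derivative at $z=0$ is nonzero.

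First I would fix an arbitrary generic $\diag(t_1,\dots,t_g)\in\II_g$ inside the neighborhood $U$ of \Cref{conv1}, and record that the whole $\II_g$ is cut out locally by $\{\tau_{ab}=0\}_{a<b}$, so the off-diagonal entries $\tau_{ab}$ form analytic coordinates transverse to $\II_g$. In these coordinates the function $g(\tau):=\theta_{m_0}(\tau)/\prod_\alpha f_\alpha$ is, up to the nowhere-zero factor $\prod_\alpha f_\alpha$, the defining equation of $\tnmz$ near this point, and by the displayed expansion
$$
g(\tau)=\frac{1}{2\pi i}\tau_{12}+O(\ep^2).
$$
Its differential at the point of $\II_g$ is therefore $d g=\tfrac{1}{2\pi i}\,d\tau_{12}\ne 0$, since every other term vanishes to order $\ge 2$ and contributes nothing to the linear part. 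Hence $\theta_{m_0}$ vanishes to first order at a generic point of $\II_g$, and its zero locus is smooth of codimension $1$ there, with tangent space $\{\tau_{12}=0\}$.

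To upgrade ``generic point of $\II_g$'' to ``a sufficiently small neighborhood of $\II_g$'' I would note that the nonvanishing of $\prod_\alpha f_\alpha$ and of the linear coefficient of $\tau_{12}$ are open conditions on $(t_1,\dots,t_g)$, excluding only the proper analytic subset where some $t_\alpha$ hits the genus-one theta-null; shrinking $U$ to avoid this locus, the computation above holds uniformly, so $d\theta_{m_0}\ne0$ along all of $\II_g\cap U$, and by continuity $d\theta_{m_0}\ne0$ on a full neighborhood. I do not expect a serious obstacle here: the argument is essentially the observation that the lowest-order term $X_2=\tfrac{1}{2\pi i}\tau_{12}$ is linear and nonzero. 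The only point requiring care, and the closest thing to a subtlety, is justifying that the prefactor $\prod_\alpha f_\alpha$ is nonvanishing at generic diagonal points (so that dividing by it is legitimate and does not introduce spurious components) and that shrinking the neighborhood keeps this so; this is a routine appeal to the one-variable theta theory rather than a genuine difficulty.
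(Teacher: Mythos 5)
Your proof is correct and is essentially the paper's own argument: the paper likewise cites the expansion \eqref{eq:thetaexpand} to write $\theta_{m_0}(\tau)=c\,\tau_{12}+O(\ep^2)$ with $c\ne 0$ (your $c=\tfrac{1}{2\pi i}\prod_\alpha f_\alpha$ made explicit) and concludes smoothness and codimension one from the nonvanishing linear term. Your extra care about the nonvanishing of $\prod_\alpha f_\alpha$ is sound, and in fact slightly more than needed, since in genus one the even theta constants and the derivative $\vt'$ of the odd theta function never vanish on $\HH_1$, so the prefactor is nonzero along all of $\II_g$.
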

\begin{proof} 
Since we are dealing with one non-trivial equation, it is clear that the locus is of codimension one, so the point of the statement is smoothness. For this, we observe that by~\eqref{eq:thetaexpand} the local defining equation admits the expansion 
$$
 \theta_{m_0}(\tau)= c \tau_{12} + O( \ep^2)
$$ 
for some non-zero $c$. Thus clearly the zero locus is smooth, of codimension one.
\end{proof}
Of course by acting by $Stab_2$ we obtain the same statement for $\tnm$ for any $m=\chars\ep\de\in\calE_2$ --- and in this case the local lowest order defining equation is $\tau_{ab}=0$, where $a$ and $b$ are the two $\chars11$ columns of the characteristics..

\smallskip
The odd case is much more elaborate, as there are~$g$ components of the gradient giving the~$g$ defining equations of $\vgn$, and we want to check that these equations are locally independent. This is hard to do directly, and for bounding the dimension of local irreducible components of various loci near $\II_g$, we will use the following well-known statement.
\begin{lm}\label{lm:cutexpand}
Let $X=\Spec \left(\CC[x_1,\dots,x_N]/\langle F_1,\dots,F_k\rangle\right)$ be an irreducible affine subscheme of $\AA^N=\Spec\,\CC[x_1,\dots,x_N]$, of dimension $n=\dim X$. Suppose $x\in X\subset\AA^N$, and let $\frakM_x\subset\calO_{\AA^N,x}$ be the maximal ideal of the local ring. For any $h\ge 1$ let $N(h)$ denote the number of algebraically independent among the images of $F_1,\dots,F_k$ modulo $(\frakM_x)^h$. Then the inequality $n\le N-N(h)$ holds for any $h$.
\end{lm}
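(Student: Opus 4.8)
The plan is to pass to the local ring at $x$ and bound the dimension of $X$ by comparing associated graded rings. Write $A:=\calO_{\AA^N,x}$ with maximal ideal $\frakM:=\frakM_x$, and let $I\subset A$ be the ideal of $X$ at $x$, so that $\calO_{X,x}=A/I$ and $\dim\calO_{X,x}=n$, since $X$ is irreducible and contains $x$. Because $X=\Spec(\CC[x_1,\dots,x_N]/\langle F_1,\dots,F_k\rangle)$, we have $F_i\in\langle F_1,\dots,F_k\rangle\subseteq I$ for every $i$. Choosing coordinates $y_1,\dots,y_N$ centered at $x$, the associated graded ring $\operatorname{gr}_\frakM A$ is the polynomial ring $\CC[y_1,\dots,y_N]$, and each $F_i$ has a leading form $\operatorname{in}(F_i)\in\CC[y_1,\dots,y_N]$; the condition that the image of $F_i$ modulo $\frakM^h$ is nonzero is exactly the condition that $\operatorname{in}(F_i)$ has degree $<h$, i.e.\ that the leading behaviour of $F_i$ is already seen in the truncation modulo $\frakM^h$.

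First I would reformulate $N(h)$ as the codimension of the common zero locus of these visible leading forms: concretely, I would produce $N(h)$ combinations $G_1,\dots,G_{N(h)}\in\langle F_1,\dots,F_k\rangle$ whose leading forms $\operatorname{in}(G_1),\dots,\operatorname{in}(G_{N(h)})$ (all of degree $<h$) form a regular sequence in $\CC[y_1,\dots,y_N]$, so that $V(\operatorname{in}(G_1),\dots,\operatorname{in}(G_{N(h)}))$ has codimension exactly $N(h)$. This is the precise content of the statement that $N(h)$ of the truncated equations are algebraically independent. Since $\operatorname{gr}_\frakM(A/\langle G_1,\dots,G_{N(h)}\rangle)$ is a quotient of $\CC[y_1,\dots,y_N]/\langle\operatorname{in}(G_1),\dots,\operatorname{in}(G_{N(h)})\rangle$, which has Krull dimension $N-N(h)$, we get $\dim A/\langle G_1,\dots,G_{N(h)}\rangle\le N-N(h)$. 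Because $I\supseteq\langle G_1,\dots,G_{N(h)}\rangle$, the ring $\calO_{X,x}=A/I$ is a further quotient, whence $n=\dim\calO_{X,x}\le N-N(h)$, as claimed. Equivalently, in the language of tangent cones, $\operatorname{in}(F_i)\in\operatorname{in}(I)$ forces the tangent cone $C_xX=V(\operatorname{in}(I))$ to lie inside $V(\operatorname{in}(G_j))$, and one concludes using the equality $\dim C_xX=\dim_x X=n$.

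The step I expect to be the main obstacle is the bookkeeping hidden in the phrase ``algebraically independent modulo $\frakM^h$''. The honest statement is about \emph{leading forms}, and algebraic independence of homogeneous forms does not by itself guarantee that they cut out the expected codimension (for instance $y_1$ and $y_1y_2$ are algebraically independent, yet $V(y_1,y_1y_2)=V(y_1)$ has codimension $1$, not $2$). Thus the content one really needs is that the visible leading terms can be arranged, after $\CC$-linear (and if necessary polynomial) combinations, into a regular sequence, i.e.\ that they impose $N(h)$ genuinely independent conditions on the tangent space or tangent cone; this is automatic in our applications, where after row reduction the leading terms are distinct coordinates $\tau_{ab}$, or independent linear forms. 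Once $N(h)$ is read as this codimension, the remaining ingredients --- equality of the local dimension with the tangent-cone dimension, and the behaviour of the associated graded under passage to a quotient --- are standard commutative algebra.
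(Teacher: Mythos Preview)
Your approach via leading forms and the tangent cone is genuinely different from the paper's. The paper argues through Hilbert--Samuel growth: it observes that $\dim_\CC\bigl(\calO_{X,x}/\frakm_x^h\bigr)$ grows like a polynomial of degree $n$ in $h$, bounds this below by $\dim_\CC\bigl(\calO_{\AA^N,x}/(\langle F_1,\dots,F_k\rangle+\frakM_x^h)\bigr)$ via the surjection of completions, and then extracts the bound $n\le N-\lim_{h\to\infty}N(h)\le N-N(h)$ from the monotonicity of $N(h)$. Your route through $\operatorname{in}(F_i)$, the associated graded, and the equality $\dim C_xX=\dim_x X$ is more direct and more geometric; it also makes clearer exactly which hypothesis is doing the work.

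You have in fact identified a genuine ambiguity in the \emph{statement} that the paper's own proof does not fully address either: taken literally, ``$N(h)$ algebraically independent elements modulo $\frakM_x^h$'' is too weak to force codimension $N(h)$, as your example $y_1,\,y_1y_2$ shows, and the paper's growth argument would need the same strengthening to go through. What is actually verified in every application of the lemma in the paper is precisely what you isolate: after row-reduction the visible lowest-order terms are distinct coordinates $\tau_{ab}$ (or squares thereof), hence a regular sequence in the associated graded. So your reading of $N(h)$ as the codimension cut out by the truncated equations is the operationally correct one, and with that reading your argument is complete and, if anything, cleaner than the paper's.
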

We note that of course if $X$ is a not necessarily irreducible affine scheme, the lemma shows that every irreducible component of $X$ that contains $x$ must have dimension at most $N-N(h)$.
\begin{proof}
We denote $\frakm_x\subset\calO_{X,x}$ the maximal ideal; since $X$ is $n$-dimensional, we of course have $\dim\left(\calO_{X,x}/(\frakm_x^h)\right)=O(n^h)$ as $h\to\infty$.

Note now that the map $\calO_{\AA^N,x}\to \calO_{X,x}$ maps the ideal $\langle F_1,\dots,F_k\rangle$ to $0$, and maps $\frakM_x$ onto $\frakm_x$. Thus we have the bound
$$
 \dim\left(\calO_{\AA^N,x}/\langle F_1,\dots,F_k,(\frakM_x)^h\rangle\right)\ge\dim\left(\calO_{X,x}/(\frakm_x)^h\right)\,,
$$
while the map of the completions
$$
 \widehat{\calO_{\AA^N,x}}/\langle F_1,\dots,F_k\rangle\to\widehat{\calO_{X,x}}
$$
is surjective. Since the function $N(h)$ is monotone, and bounded above by $N$, it must have a limit, and from the above inequalities we thus obtain
$$
n\le N-\lim_{h\to\infty} N(h)\le N-N(h)
$$
for any $n$.
\end{proof}

We now obtain the dimension result for odd characteristics in $\calO_3$. 
\begin{prop}
Denote $m_0:=\chars{1110\dots0}{1110\dots0}$. Then in a sufficiently small neighborhood of $\II_g$, the locus 
$$
 \gnmz=\lbrace \tau\in\HH_g\colon \grad \theta_{m_0}(\tau)=0\rbrace
$$
is of codimension~$g$ in~$\HH_g$.
\end{prop}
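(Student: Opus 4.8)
The plan is to prove the two codimension bounds separately. Since $\gnmz$ is cut out inside $\HH_g$ by the $g$ equations $\partial_{z_a}\theta_{m_0}=0$, $a=1,\dots,g$, every local irreducible component through a point of $\II_g$ automatically has codimension at most $g$, so the whole content is the lower bound. For this I would fix a generic point $x=\diag(t_1,\dots,t_g)\in\II_g$ (genericity is harmless, since the neighborhood statement together with the homogeneity of $\II_g$ propagates the bound) and apply \Cref{lm:cutexpand} with $N=\binom{g+1}{2}$ and the $g$ functions $F_a:=\partial_{z_a}\theta_{m_0}$, aiming to show $N(h)=g$. The essential difficulty, and the reason one cannot simply read off the answer from the linear parts, is that the initial forms of the $F_a$ do \emph{not} form a regular sequence; one is forced to higher order, which is exactly the function of the transverse subvariety in the lemma.

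Concretely, by \eqref{eq:expgrad1} with $\ell=3$ the three equations from the indices $J=1,2,3$ have independent linear leading terms proportional to $\tau_{23},\tau_{13},\tau_{12}$, and so cut out a smooth codimension-$3$ submanifold $V_3\subset\HH_g$ near $\II_g$, on which $\tau_{12},\tau_{13},\tau_{23}$ become holomorphic functions of the remaining entries vanishing to order $\ge 2$. By \eqref{eq:expgrad2} the remaining $g-3$ equations $F_j$, $j\ge 4$, have \emph{quadratic} leading term $\phi_j\,[j,1,2,3]$, and since every monomial of $[j,1,2,3]$ contains one of $\tau_{12},\tau_{13},\tau_{23}$, this quadratic part vanishes identically on $V_3$. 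The first step is therefore to restrict the $F_j$ to $V_3$ — equivalently, to replace $F_j$ by $F_j-\tfrac{\phi_j}{2\pi i}(\tau_{j1}F_1+\tau_{j2}F_2+\tau_{j3}F_3)$, which cancels the quadratic part — and to compute the resulting cubic initial form $C_j$.

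The key computation, combining the cubic terms of \eqref{eq:expgrad2} with the order-$2$ expressions for $\tau_{12},\tau_{13},\tau_{23}$ on $V_3$ coming from \eqref{eq:expgrad1}, is that $C_j$ is trilinear, of multidegree $(1,1,1)$ with respect to the three columns $\tau_{\bullet 1},\tau_{\bullet 2},\tau_{\bullet 3}$, and has the shape
\[
C_j=(\psi_j-2\phi_j^2)\,\tau_{j1}\tau_{j2}\tau_{j3}+\phi_j\bigl(\tau_{j1}S_1+\tau_{j2}S_2+\tau_{j3}S_3\bigr),
\]
where $S_1,S_2,S_3$ are the global quadratic sums $\sum_{\alpha}\phi_\alpha\tau_{\alpha 2}\tau_{\alpha 3}$, $\sum_\alpha\phi_\alpha\tau_{\alpha 1}\tau_{\alpha 3}$, $\sum_\alpha\phi_\alpha\tau_{\alpha 1}\tau_{\alpha 2}$, so that each cross term couples block $j$ to a single other block $\alpha$, linearly in block $j$ and quadratically in block $\alpha$. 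Here the private coefficient $\psi_j-2\phi_j^2=\vt_j''''/\vt_j-3(\vt_j''/\vt_j)^2$ is a nontrivial function of $t_j$, hence nonzero for generic $x$. The plan is then to show that these $g-3$ trilinear forms cut out codimension $g-3$ on $V_3$; together with $\codim V_3=3$ and \Cref{lm:cutexpand} this gives $\codim\gnmz\ge g$, and hence equality.

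I expect the main obstacle to be precisely this last claim. Because each $C_j$ carries the monomial $\tau_{j1}\tau_{j2}\tau_{j3}$ in variables private to $j$, the forms $C_4,\dots,C_g$ are certainly algebraically independent; but, as the elementary example $\{x,xy\}$ shows, algebraic independence of the initial forms of generators does not by itself force the expected codimension, and the cross terms here prevent the private monomials from being simultaneous leading monomials under any single monomial order. The resolution I would pursue is to exploit the trilinear normal form above: specializing to the locus $S_1=S_2=S_3=0$ reduces the system to $\tilde c_j\,\tau_{j1}\tau_{j2}\tau_{j3}=0$ with $\tilde c_j\ne0$, and a direct dimension count then shows the common zero cone of $C_4,\dots,C_g$ has the expected dimension $2(g-3)$, i.e.\ that they form a regular sequence. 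The generic non-vanishing of $\psi_j-2\phi_j^2$ is a short separate check, most cleanly via the heat equation, which rewrites it as a nontrivial differential expression in the genus-one theta constant $\vt_j$.
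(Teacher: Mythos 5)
Your architecture is sound, and it is in substance the paper's own argument: the paper slices with the linear subvariety $Y=\{\tau_{jk}=0,\ 4\le j<k\le g;\ \tau_{1j}=\tau_{2j}=\tau_{3j}\}$ where you slice with $V_3=\{F_1=F_2=F_3=0\}$, and the identity it establishes there, $\partial_{z_j}\theta_{m_0}|_Y\equiv \phi_j\tau_{1j}(\partial_{z_1}+\partial_{z_2}+\partial_{z_3})\theta_{m_0}|_Y+(\psi_j-2\phi_j^2)\tau_{1j}^3 \bmod \frakm^4$, is exactly the statement you need on $V_3$. But your key computation is wrong: the cross terms in your normal form for $C_j$ cancel identically and should not be there. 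Substituting the order-two relations $\tau_{23}\equiv -S_1$, $\tau_{13}\equiv -S_2$, $\tau_{12}\equiv -S_3$ from \eqref{eq:expgrad1} into the quadratic part $\phi_j[j,1,2,3]$ produces $-\phi_j(\tau_{j1}S_1+\tau_{j2}S_2+\tau_{j3}S_3)$, while the term $\phi_j\sum_\alpha\phi_\alpha[\alpha,\alpha,j,1,2,3]$ of \eqref{eq:expgrad2} contributes on $V_3$, modulo $O(\ep^4)$, the monomials $\phi_j\phi_k(\tau_{k1}\tau_{k2}\tau_{j3}+\tau_{k1}\tau_{k3}\tau_{j2}+\tau_{k2}\tau_{k3}\tau_{j1})$ for $k\ge 4$, $k\ne j$, plus $\phi_j^2\tau_{j1}\tau_{j2}\tau_{j3}$ from $\alpha=j$; summed over $k$ this equals $+\phi_j(\tau_{j1}S_1+\tau_{j2}S_2+\tau_{j3}S_3)-2\phi_j^2\tau_{j1}\tau_{j2}\tau_{j3}$. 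Hence the correct cubic initial form is the pure monomial
\[
C_j=(\psi_j-2\phi_j^2)\,\tau_{j1}\tau_{j2}\tau_{j3}\,,
\]
with no cross terms; the coefficient $\psi_j-2\phi_j^2$ that you quote is precisely the output of this cancellation, so keeping it \emph{and} the cross terms double-counts.

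The error is not harmless, because the device you introduce to tame the (in fact nonexistent) cross terms fails. The slice $W=\{S_1=S_2=S_3=0\}$ meets the monomial locus $\bigcup_\sigma \Lambda_\sigma$, where $\Lambda_\sigma=\{\tau_{j\sigma(j)}=0,\ j\ge4\}$, very non-transversally: if $\sigma$ is constant, say $\sigma\equiv 2$, then $S_1$ and $S_3$ vanish identically on $\Lambda_\sigma$, so $Z\cap W$ contains $\Lambda_\sigma\cap\{S_2=0\}$, of ambient codimension $g-2$ rather than the expected $g$ (for $g=5$ this set is a whole irreducible component of $W$, on which your specialized system $\{\tilde c_j\tau_{j1}\tau_{j2}\tau_{j3}=0\}$ is identically zero). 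So the ``direct dimension count'' does not give the expected dimension, and the slicing bound $\dim Z\le\dim(Z\cap W)+\codim W$ can then certify at most $\codim Z\ge g-5$, which does not prove the proposition. Once the computation is corrected, the difficulty evaporates: $C_4,\dots,C_g$ are nonzero monomials in pairwise disjoint sets of variables, hence a regular sequence on the nose, and for generic $t_j$ (where $\psi_j-2\phi_j^2\ne0$, as both you and the paper must check) this gives codimension $g-3$ along $V_3$, hence codimension $g$ in $\HH_g$, with the upper bound from Krull as you say. With that fix your proof is complete --- and it is the paper's proof, up to the choice of slice.
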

\begin{proof}
We will apply \Cref{lm:cutexpand} for $X=\gnmz$, and consider the intersection of $X$ with the locus $Y$ given by equations
$$
 \lbrace \tau_{jk}=0 \quad \forall\, 4\le j<k\le g\rbrace\qquad\hbox{and}\qquad \lbrace\tau_{1j}=\tau_{2j}=\tau_{3j}\quad \forall\, 4\le j\le g\rbrace\,.
$$
These are altogether
$$
 \frac{(g-3)(g-4)}{2}+2(g-3)=\frac{g(g-3)}{2}
$$
equations, which are clearly independent, and all of which are of course satisfied on $\II_g$. Thus $\II_g\subset Y$, and $\codim_{\HH_g}Y=\tfrac{g(g-3)}{2}$. We claim that in a sufficiently small neighborhood $U$ of $\II_g$ we have $X\cap Y\cap U=\II_g$. Since $\dim\II_g=g$, this would imply that
$$
 \dim (X\cap U)\le \dim\II_g+\codim_{\HH_g}Y=g+\frac{g(g-3)}{2}=\frac {g(g-1)}{2}\,,
$$
and thus that the codimension of $X\cap U$ in $U$ is at least $g$. Since $X\subset\HH_g$ is given by $g$ equations, it follows that $\codim_{\HH_g}X\cap U=g$.
 
To see that $X\cap Y\cap U=\II_g$, we simply plug in the defining equations of $Y$ into the expansion~\eqref{eq:expgrad1} of the theta gradient computed above, always excluding the common factor of $\prod f_\alpha$, which is non-zero at a generic point of $\II_g$:
$$
\partial_{z_1}\theta_{m_0}\equiv [2,3]+\sum_\alpha\phi_\alpha\cdot[\alpha,\alpha,2,3]\equiv \tau_{23}+\phi_1\tau_{12}\tau_{13}+\sum_{j\ge 4}\phi_j\tau_{2j}\tau_{3j}\mod\frakm^3\,,
$$
where we recall that $t_1,\dots,t_g$ are considered fixed, so that $\frakm=\langle\left\lbrace\tau_{ab}\right\rbrace_{1\le a<b\le g}\rangle$.
Now substituting into this the defining equations for $Y$ we obtain
$$
\partial_{z_1}\theta_m|_Y\equiv \tau_{23}+\phi_1\tau_{12}\tau_{13}+\sum_{j\ge 4}\phi_j\tau_{1j}^2\mod\frakm^3\,,
$$
and of course the expressions for $\partial_{z_2}\theta_m|_Y$ and $\partial_{z_3}\theta_m|_Y$ are completely analogous:
$$
\partial_{z_2}\theta_m|_Y\equiv \tau_{13}+\phi_1\tau_{12}\tau_{23}+\sum_{j\ge 4}\phi_j\tau_{1j}^2\mod\frakm^3\,,
$$
$$
\partial_{z_3}\theta_m|_Y\equiv \tau_{12}+\phi_1\tau_{13}\tau_{23}+\sum_{j\ge 4}\phi_j\tau_{1j}^2\mod\frakm^3\,.
$$
For the partial $z$-derivatives in the other directions we obtain from~\eqref{eq:expgrad2}
$$\begin{aligned}
\partial_{z_j}\theta_m&\equiv\phi_j\cdot ([j,1,2,3]+\sum\phi_\alpha\cdot[\alpha,\alpha,j,1,2,3])+\psi_j\cdot[j,j,j,1,2,3]\\
&\equiv\phi_j\cdot\left(\tau_{1j}\tau_{23}+\tau_{2j}\tau_{13}+\tau_{3j}\tau_{12}+\sum\phi_\alpha\cdot[\alpha,\alpha,j,1,2,3]\right)
+\psi_j\tau_{1j}\tau_{2j}\tau_{3j}\mod\frakm^4\,.
\end{aligned}
$$
We compute $[1,1,j,1,2,3]|_Y=\tau_{12}\tau_{13}\tau_{1j}$, $[j,j,j,1,2,3]|_Y=\tau_{1j}^3$, and
$$
[k,k,j,1,2,3]|_Y=\tau_{1j}\tau_{2k}\tau_{3k}+\tau_{2j}\tau_{1k}\tau_{3k}+\tau_{3j}\tau_{1k}\tau_{2k}=3\tau_{1j}\tau_{1k}^2\,,
$$
so that we can finally evaluate
$$
\begin{aligned}
\partial_{z_j}\theta_m|_Y&\equiv \phi_j\tau_{1j}\cdot\left(\tau_{23}+\tau_{13}+\tau_{12}+\phi_1\tau_{12}\tau_{13}+\phi_2\tau_{12}\tau_{23}+\phi_3\tau_{13}\tau_{23}\right)\\
&+3\phi_j\tau_{1j}\sum_{k\ge 4}\phi_k\cdot\tau_{1k}^2+\psi_j\tau_{1j}^3\mod\frakm^4\,.
\end{aligned}
$$

We are interested in the locus $Y\cap\gn\chars\ep\de$, and thus we can substitute the expansions of the equations $\partial_{z_1}\theta_m|_Y=\partial_{z_2}\theta_m|_Y=\partial_{z_3}\theta_m|_Y=0$ in the last equation, obtaining
$$
\begin{aligned}
 \partial_{z_j}\theta_m|_Y&\equiv \phi_j\tau_{1j}\cdot(\partial_{z_1}\theta_m|_Y+\partial_{z_2}\theta_m|_Y+\partial_{z_3}\theta_m|_Y)\\ &+\tau_{1j}^3(\psi_j-2\phi_j^2)
\mod\frakm^4\,.
\end{aligned}
$$
Since $\psi_j-2\phi_j^2$ is a not identically zero function on $\HH_1$, it does not vanish at a generic point of~$\II_g$, and thus the vanishing of $\grad \theta_{m_0}\mod\frakm^4$ implies $\tau_{1j}=0$ for all $4\le j\le g$, which together with the defining equations for $Y$ implies that $\tau$ is diagonal except possibly for the entries $\tau_{12},\tau_{23},\tau_{13}$. However, the vanishing of $z_1,z_2,z_3$ derivatives $\mod\frakm^2$ gives the equations $\tau_{12}=\tau_{23}=\tau_{13}$. Thus altogether it follows that every irreducible component of $\gnmz$ that contains $\II_g$ has codimension at least~$g$ in $\HH_g$, and thus codimension precisely~$g$. 
\end{proof}

\subsection{Theta-null and gradient loci for arbitrary characteristics}
Note that the computations above for $m_0\in\calO_3$ are already quite involved. If we wanted to deal with the locus $\theta_m(\tau)=0$ or $\grad \theta_m(\tau)=0$ for $\ell\gg 3$, the computation would be daunting, as we would need to consider the Taylor expansion to order $\ell/2$, see eg.~\eqref{eq:thetaexpand}. Thus instead we will use the action of $G_g$, and the fact that the loci we are interested in contain the big diagonal, defined below.
 
The loci we deal with will be defined in terms of the geometry of the theta divisor. While just thinking of the locus of ppav such that the theta divisor satisfies some geometric condition (eg has a singular point with some properties) only defines a locus {\em set-theoretically} as a {\em subset} of $\calA_g$, of course the loci we are interested in are in fact algebraic. However, thinking of them as {\em subvarieties} of $\calA_g$, by arguing that the conditions that define them are {\em algebraic} is also insufficient for our purposes. Indeed, for us it is important to consider these as {\em subschemes} of $\calA_g$ --- as it is for Mumford's computation \cite{mumforddimag} of the class of the Andreotti-Mayer divisor, and in general for thinking about the Andreotti-Mayer loci.

The way we think of the scheme structure on these loci is as follows: recall that the universal cover of the universal family $\calX_g\to\calA_g$ of ppav is $\HH_g\times\CC^g$, with the covering group being the semidirect product $\Sp(2g,\ZZ)\rtimes \ZZ^{2g}$. The theta function is a global holomorphic function on $\HH_g\times\CC^g$, and various geometric conditions on the singularities of the theta divisor can then be interpreted as various analytic equations on $\HH_g\times\CC^g$ involving the theta function and its partial derivatives. 

For the purposes of obtaining the results below, we will only be interested in the geometry of the theta divisor at the two-torsion points of the ppav, and these conditions can be defined analytically over $\HH_g$ in terms of theta constants with characteristics and their derivatives. The loci thus defined naturally come with an analytic defining ideal on $\HH_g$; the defining ideal is invariant under the action of $\Gamma_g$ (or $\Gamma_g(2)$ depending on the context), and thus images of these in $\calA_g$ (or $\calA_g(2)$) have natural defining algebraic equations, and thus a natural scheme structure. This explains our care in describing the following general setup.

For any $m=\chars\ep\de\in\calE $ we set $\tnm:=\tn{\chars\ep\de}$, and for any $m=\chars\ep\de\in\calO$ we set
$\grad\tnm:=\grad\tn{\chars\ep\de}$. Recall that by definition $\tn=\cup_{m\in\calE}\tnm$. 

Given any analytic subspace $\XX\subset\HH_g$, which will be contained in either $\tn$ or $\gn$, depending on the context, we decompose it as $\XX=\cup_{m} \XX_m$, where $\XX_m:=\XX\cap\tnm$ (or respectively $\XX_m:=\XX\cap\grad\tnm$). We note that $\XX_m\cap\XX_n$ may be non-empty.

Assume $\XX\subset\tn$ (or $\XX\subset\gn$) is an analytic subspace of~$\HH_g$ satisfying $\Gamma_g\circ\XX=\XX$ (as a set), so in particular $\Gamma_g$ acts transitively on the set of $\XX_m$ for all $m\in\calE$, and for any $m$ the setwise stabilizer $\Gamma_m$ of $\XX_m$ contains $\Gamma_g(2)$. Denoting $X:=p(\XX)\subset\calA_g$ the image, observe that $p^{-1}(X)=\XX$, and there exists a well-defined scheme structure on $X$ induced by the defining equations of~$\XX$.

We will be interested in computing the dimension of irreducible components of $X$ containing $\calD_g$. This is related to computing the dimension of irreducible components of $\XX$ containing $\II_g$, which we will approach via Taylor expansions in the neighborhood of $\II_g$. The difficulty is that a priori the scheme $X$ may have embedded components containing $\calD_g$, and thus thinking of $X$ as a subvariety may not suffice. Essentially the difficulty is that if $\II_g\subset\XX_m$, and we can determine irreducible components of $\XX_m$ containing $\II_g$, it could be that also $\II_g\subset\XX_n$ for some other $n$, and the image in $\calA_g$ of an irreducible component of $\XX_n$ containing $\II_g$ may be strictly contained in the image in $\calA_g$ of an irreducible component of $\XX_m$ containing $\II_g$. We will deal with this by explicitly imposing the additional assumption that a component contains the big diagonal that we now define (this condition will hold for all those loci that we are interested in). 

For the even case (i.e.~when we are interested in $\XX\subset\tn$), we define the big diagonal as
$$
 \LL_g^{e}:=\HH_1 \times \HH_1 \times \HH_2 \times\dots \times \HH_2 \quad {\rm or} \quad \LL_g^{e}:=\HH_1 \times \HH_1 \times \HH_2 \times\dots \times \HH_2\times \HH_1\,,
$$
(where the presence of the last factor depends on the parity of~$g$) as the locus of period matrices that have one less than the maximal possible number of $2\times 2$ blocks along the diagonal. The direct product $L_g^{e}:= \Gamma_1 \times\Gamma_1\times \Gamma_2\times \dots\times\Gamma_2$ or $L_g^{e}:=\Gamma_1 \times\Gamma_1\times \Gamma_2\times \dots\times\Gamma_2\times\Gamma_1$ (where the presence of the last factor depends on the parity of~$g$) is clearly contained in the stabilizer $Stab_{\LL_g^{e}}$. Finally, in the even case we set $m_0:= \chars{110\dots0}{110\dots0}\in\calE_2$, so that $\tnmz\supset \II_g$. 

Similarly for the odd case of $\XX\subset\gn$ we define the big diagonal to be $\LL_g^{o}:= \HH_1 \times \LL_{g-1}^{e}$, and note that its stabilizer contains the direct product $L_g^{o}:= \Gamma_1\times L_g^{e}$. In this case we set $m_0:= \chars{1110\dots0}{1110\dots0}\in\calO_3$, so that again $\gnmz\supset \II_g$.
 
We will use these to investigate irreducible components locally near $\II_g$ by applying the following statement.
\begin{prop}\label{prop:maxcomponent} 
Let $\XX\subset \tn$ (resp.~$\XX\subset\gn$) be an analytic subspace of $\HH_g$ containing $\II_g$, such that~$\XX=\cup \XX_m$ is invariant under $\Gamma_g$. Let $\YY\subset \XX_{m_0}$ be an irreducible component of $\XX_{m_0}$ containing $\II_g$.
 
If $\LL_g^e\subset\YY$ (resp. $\LL_g^o\subset\YY$), then for each $m\in \calE^*$ (resp.~$m\in\calO^*$) there exists an element $\sigma_m\in G_g$ mapping $m_0$ to~$m$, such that $\sigma_m (\YY)$ is an irreducible component of $\XX_m$ containing $\II_g$ .
\end{prop}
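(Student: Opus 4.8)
The plan is to transport the component $\YY$ to a component of $\XX_m$ by an element of $G_g$ realizing the passage $m_0\mapsto m$ on characteristics, using the hypothesis $\LL_g^e\subset\YY$ (resp.\ $\LL_g^o\subset\YY$) only at the very end to guarantee that the transported component still contains $\II_g$. I treat the even case; the odd case is identical after replacing $\calE$, $\tnm$, $\LL_g^e$ by $\calO$, $\grad\tnm$, $\LL_g^o$. First I would invoke \Cref{lm:orbita}: since $G_g$ acts transitively on $\calE$ and $m_0\in\calE_2\subset\calE^*$, for every $m\in\calE^*$ there is $\sigma_m\in G_g\subset\Gamma_g$ with $\sigma_m\circ m_0=m$. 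Because $\Gamma_g$ permutes the loci $\tnm$ according to its affine-linear action on characteristics, $\sigma_m(\tnmz)=\tnm$; and since $\XX$ is $\Gamma_g$-invariant, $\sigma_m(\XX)=\XX$. Hence $\sigma_m(\XX_{m_0})=\sigma_m(\XX\cap\tnmz)=\XX\cap\tnm=\XX_m$. As $\sigma_m$ acts biholomorphically on $\HH_g$, it carries irreducible components to irreducible components, so $\sigma_m(\YY)$ is an irreducible component of $\XX_m$; the only thing left to check is that $\sigma_m(\YY)$ contains the standard diagonal $\II_g$.

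Next I would reformulate this requirement: $\II_g\subset\sigma_m(\YY)$ is equivalent to $\sigma_m^{-1}(\II_g)\subset\YY$, and since $\LL_g^e\subset\YY$, it suffices to arrange that $\sigma_m^{-1}(\II_g)\subset\LL_g^e$. This is exactly where the containment of the big diagonal is indispensable: a generic $\sigma_m\in G_g$ does not stabilize $\II_g$, so $\sigma_m^{-1}(\II_g)$ is in general a different irreducible component of $\DD_g$. What saves us is that $\LL_g^e$ already contains many components of $\DD_g$, namely one for each choice of a component of $\DD_2$ inside each $\HH_2$ block.

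The heart of the argument is therefore to choose $\sigma_m$ with a controlled factorization $\sigma_m=B\,A$, where $A\in\Stab_{\LL_g^e}$ and $B\in\Stab_{\II_g}$. Revisiting the proof of \Cref{lm:orbita}, the only moves that change $\ell$ are applications of the element $\sigma_0\in\Gamma_2$, and I would perform all of them inside the $\HH_2$ blocks of $\LL_g^e$, on the coordinate pairs $\{3,4\},\{5,6\},\dots$: each such move acts only within one $\HH_2$ block, hence lies in $\Stab_{\LL_g^e}\cap G_g$. Starting from $m_0$, whose two $\chars11$ columns already sit in the two $\HH_1$ blocks in positions $1,2$, these moves produce a representative $m'\in\calE_\ell$ with $\chars11$ columns exactly in positions $1,\dots,\ell$; call their product $A\in\Stab_{\LL_g^e}$. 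It then remains to relocate the $\chars11$ columns to the prescribed positions of $m$ and to normalize the remaining even columns, which is achieved by $B\in\Stab_{\II_g}$ built from a permutation in $S_g$ together with per-coordinate $\Gamma_1$ actions (these fix the unique odd genus-one characteristic $\chars11$ and send $\chars00$ to any even column). With this factorization $B^{-1}(\II_g)=\II_g$ and $A^{-1}(\LL_g^e)=\LL_g^e$, so $\sigma_m^{-1}(\II_g)=A^{-1}(\II_g)\subset A^{-1}(\LL_g^e)=\LL_g^e\subset\YY$, as needed.

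The main obstacle is precisely this last construction: one must verify that the $\ell$-changing steps of \Cref{lm:orbita} can genuinely be confined to the $\HH_2$ blocks of $\LL_g^e$ while the relocation is left to $\Stab_{\II_g}$, so that all block-mixing is absorbed by $A$ and $\sigma_m^{-1}(\II_g)$ lands inside $\LL_g^e$. This also involves routine bookkeeping on the block decomposition of $\LL_g^e$ and the parity of $g$, to ensure the required blocks $\{3,4\},\dots,\{\ell-1,\ell\}$ are available as $\HH_2$ factors, which holds since $\ell\le g$. For the odd case one uses that $\LL_g^o=\HH_1\times\LL_{g-1}^e$ carries three $\HH_1$ blocks matching the three $\chars11$ columns of $m_0\in\calO_3$, and the same factorization then applies verbatim.
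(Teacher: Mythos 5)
Your proposal is correct and takes essentially the same approach as the paper: both factor $\sigma_m$ into an $\ell$-changing part built from $\sigma_0$-moves confined to the $2\times 2$ blocks (hence lying in $L_g^e\subset\Stab_{\LL_g^e}$, exactly as in the proof of \Cref{lm:orbita}) composed with an element of $\Stab_{\II_g}$, and then use the hypothesis $\LL_g^e\subset\YY$ (resp.\ $\LL_g^o\subset\YY$) to conclude that $\II_g\subset\sigma_m(\YY)$. The only cosmetic differences are that you treat all $\ell$ uniformly via the factorization $\sigma_m=BA$, where the paper separates the case $\ell=2$ (handled entirely by $\Stab_{\II_g}$) from $\ell\ge 4$, and that you get the component property from $\sigma_m$ being a biholomorphism sending $\XX_{m_0}$ onto $\XX_m$, where the paper phrases this as a short contradiction argument.
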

\begin{proof} 
We give the argument for the even case; the argument for the odd case being completely analogous, using $\LL_g^o$ and $L_g^o$ instead of $\LL_g^e$ and $L_g^e$.

We first observe that if $m\in\calE_2$ (resp.~$m\in\calO_3$), then, since $Stab_{\II_g}$ acts transitively on $\calE_2$ (resp.~$\calO_3$), 
there exists $\sigma_m\in Stab_{\II_g}$ sending $m_0$ to $m$. Since $\sigma_m(\II_g)=\II_g$ (as a set), the image $\sigma_m(\YY)$ contains $\II_g$. Since $\sigma_m(\XX_{m_0})=\XX_m$ by the $\Gamma_g$-invariance of $\XX$ and by the definition of $\XX_m$, it follows that $\sigma_m(\YY)$ is an irreducible component of $\XX_m$, containing~$\II_g$.

To deal with the case of $m\in\calE_\ell$ with $\ell\ge 4$, we first observe that since $Stab_{\II_g}$ acts transitively on $\calE_\ell$, it is enough to deal with the case of $m=\chars{1\dots10\dots0}{1\dots10\dots0}$ with $\ell$ columns equal to $\chars11$. By the proof of \Cref{lm:orbita} there exists an element $\sigma _m \in L_g^e\subset Stab_{\LL_g^e}$ such that $\sigma_m\cdot m_0=m$. Since $\LL_g^e\subset\YY\subset\XX_{m_0}$ by assumption, and since $\XX$ is $\Gamma_g$-invariant, it follows that $\sigma_m(\LL_g^e)=\LL_g^e\subset \sigma_m(\YY)\subset\XX_m$, and in particular $\sigma_m(\YY)\supset\LL_g^e\supset\II_g$. If $\sigma_m(\YY)$ were not an irreducible component of $\XX_m$, i.e. if there existed an irreducible component $\WW$ of $\XX_m$ strictly containing $\sigma_m(\YY)$, then by invariance of $\XX$ under $\Gamma_g$, the preimage $\sigma_m^{-1}(\WW)$ would be an irreducible component of $\XX_{m_0}$ containing $\YY$, giving a contradiction. 
\end{proof}
The proposition can be rephrased as a statement on subvarieties of $\calA_g$:
\begin{cor}\label{cor:maxcomponent}
 Let $\calX\subset \vtn\subset\calA_g$ (resp.~$\calX\subset\vgn\subset\calA_g$) be an algebraic subvariety containing $p(\LL_g^e)\supset \calD_g$ (resp.~$p(\LL_g^o)\supset \calD_g$). Denote $\XX:=p^{-1} (\calX)\subset \HH_g$, and let $\YY\subset\XX_m:=\XX\cap\tnm$ (resp.~$\XX\cap\gnm$) be an irreducible component containing $\LL_g^e$ (resp.~$\LL_g^o$). Then $p(\YY)$ is an irreducible component of $\calX$ containing $\calD_g$.
\end{cor}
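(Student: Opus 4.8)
The plan is to deduce the corollary from \Cref{prop:maxcomponent} together with the fact that $p\colon\HH_g\to\calA_g$ is a Galois cover, hence a local isomorphism. Two of the three assertions are immediate: from $\YY\subset\XX=p^{-1}(\calX)$ we get $p(\YY)\subset\calX$, and from $\YY\supset\LL_g^e$ we get $p(\YY)\supset p(\LL_g^e)\supset\calD_g$. The content is the maximality of $p(\YY)$, and I would first recast it on $\HH_g$: it suffices to show that $\YY$ is an irreducible component of the whole preimage $\XX$, not merely of $\XX_m$. Indeed, $\XX$ is $\Gamma_g$-invariant and $p$ is a local isomorphism, so $\Gamma_g$ permutes the components of $\XX$ lying over a fixed component of $\calX$ and $\dim p(\YY)=\dim\YY$; thus any irreducible component of $\XX$ maps onto an irreducible component of $\calX$ of the same dimension. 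I also record the useful remark that, because $\calX\subset\vtn$ forces $\XX\subset\tn=\bigcup_{m'}\{\theta_{m'}=0\}$, any irreducible subset of $\XX$ lies in a single $\{\theta_{m'}=0\}$ (an irreducible set inside a finite union of the analytic hypersurfaces $\{\theta_{m'}=0\}$ is contained in one of them); in particular every irreducible component of $\XX$ is automatically an irreducible component of the corresponding $\XX_{m'}=\XX\cap\{\theta_{m'}=0\}$.

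I would then argue by contradiction: suppose $\YY$ is not a component of $\XX$, and let $\WW\supsetneq\YY$ be a component of $\XX$ containing it. By the remark above, $\WW\subset\{\theta_n=0\}$ for some characteristic $n$, so $\WW$ is a component of $\XX_n$, and $\WW\supset\YY\supset\LL_g^e$ with $\dim\WW>\dim\YY$. Both $m$ and $n$ lie in $\calE^*$ (resp.\ $\calO^*$), since $\II_g\subset\LL_g^e$ sits in $\{\theta_m=0\}$ and $\{\theta_n=0\}$, by \Cref{cor:diagonal}. The idea is now to transport the oversized component $\WW$ into $\XX_m$ by an element of $G_g$ preserving the big diagonal, and then invoke the non-containment (maximality) mechanism from the proof of \Cref{prop:maxcomponent}: if some $\sigma\in G_g\cap Stab_{\LL_g^e}$ carried $n$ to $m$, then $\sigma(\WW)$ would be a component of $\XX_m$ containing $\LL_g^e$ of dimension $\dim\WW>\dim\YY$, and, matching it against $\YY$ exactly as the proposition matches $\sigma_m(\YY)$ against an arbitrary containing component, one would contradict the maximality of $\YY$ inside $\XX_m$.

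The genuinely delicate step, which I expect to be the main obstacle, is precisely this matching of characteristics while keeping the inclusion over $\YY$. A priori the swallowing characteristic $n$ may have a different parity pattern than $m$ relative to the block decomposition of $\LL_g^e$ (for instance $\{\theta_n=0\}$ may contain $\LL_g^e$ because of an odd factor on one of the $2\times2$ blocks rather than on a $1\times1$ block), so that no element of $Stab_{\LL_g^e}\cap G_g$ sends $n$ to $m$; here one must fall back on the full transitivity of $G_g$ on $\calE^*$ (resp.\ $\calO^*$) from \Cref{lm:orbita}, at the cost of moving $\LL_g^e$ to a $\Gamma_g$-conjugate big diagonal $\LL'$. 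This is harmless for the hypotheses, since $p(\LL')=p(\LL_g^e)\subset\calX$, but it requires redoing the analysis with $\LL'$ in place of $\LL_g^e$ and verifying that the transported larger component still dominates $\YY$ itself rather than merely a translate of it. I would organize this last point exactly as in \Cref{prop:maxcomponent}: pulling the putative larger component of $\XX_m$ back by the transporting element produces a component of $\XX_{m_0}$ strictly containing the relevant translate of $\YY$, contradicting that the latter is a component there. The dimension bookkeeping (all $\sigma\in G_g$ and $p$ are local isomorphisms, hence dimension-preserving) and the reduction are routine; the weight of the proof rests on controlling the interaction between the distinct characteristics $m$ and $n$, which is exactly what the big-diagonal hypothesis is introduced to make possible.
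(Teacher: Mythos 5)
Your preliminary reductions are correct and would be part of any complete argument: the equivalence ``$p(\YY)$ is a component of $\calX$ $\iff$ $\YY$ is a component of $\XX$'' (valid because $\Gamma_g$-translates of components of $\XX$ are again components, so two components of $\XX$ cannot have strictly nested images), and the observation that every component of $\XX$ lies in a single $\tn_{n}$-locus and is therefore a component of the corresponding $\XX_n$. The genuine gap is in the transport step, and it is not a deferrable technicality --- it is the entire content of the statement. Suppose $\WW\supsetneq\YY$ is a component of $\XX_n$ with $n\neq m$. Applying any $\sigma\in\Gamma_g$ moves \emph{both} sets: one obtains $\sigma(\WW)\supsetneq\sigma(\YY)$, where $\sigma(\WW)$ is a component of $\XX_{\sigma\circ n}$ and $\sigma(\YY)$ is a component of $\XX_{\sigma\circ m}$. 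To contradict the maximality of $\YY$ inside $\XX_m$ you must exhibit an irreducible subset of $\XX_m$ strictly containing $\YY$ \emph{itself}; but $\sigma(\WW)$ contains $\sigma(\YY)$, not $\YY$, and since the action on characteristics is injective you can never have $\sigma\circ n=m$ and $\sigma\circ m=m$ simultaneously. This is precisely why the proof of \Cref{prop:maxcomponent} does close: there the putative larger component lies by hypothesis in the \emph{same} $\XX_m$ as $\sigma_m(\YY)$, so pulling back by $\sigma_m^{-1}$ places both sets inside $\XX_{m_0}$. Your concluding step --- that pulling back ``produces a component of $\XX_{m_0}$ strictly containing the relevant translate of $\YY$, contradicting that the latter is a component there'' --- yields no contradiction: a component of $\XX_{m_0}$ strictly contained in an irreducible subset of $\XX$ that lies in some \emph{other} $\XX_{n'}$ is exactly the situation the Corollary aims to exclude, merely relabelled, and nothing in the definition of a component of $\XX_{m_0}$ forbids it.

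The difficulty you flagged yourself is real and compounds the problem rather than being a side case: when $n$ is not in the $Stab_{\LL_g^e}$-orbit of $m$ (already for $g\ge 5$ there are even characteristics $n$ with four odd blocks relative to the block structure of $\LL_g^e$, while $m_0$ has two, and the number of odd blocks is a $Stab_{\LL_g^e}$-invariant), no element carrying $n$ to $m$ lies in $Stab_{\II_g}\cdot Stab_{\LL_g^e}$; for a general $\sigma\in G_g$ with $\sigma\circ n=m$, the image $\sigma(\WW)$ is only known to contain $\sigma(\LL_g^e)$, which need not contain $\II_g$, let alone $\YY$, so after your ``fallback'' every containment the argument needs is lost. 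For comparison: the paper offers no proof at all here --- the Corollary is presented as a rephrasing of \Cref{prop:maxcomponent} --- so what must be supplied is exactly this cross-characteristic comparison, namely the production of an oversized irreducible subset of $\XX_m$ containing $\YY$ itself (for instance by first using the Galois argument, as you do, to choose the violating component to contain $\YY$, and then transporting only by elements that preserve the containments relevant to $\YY$, which requires information beyond $\LL_g^e\subset\YY$). Your proposal does not accomplish this, and as written the contradiction it invokes is circular.
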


What this proposition essentially rules out is the situation discussed above, where $\II_g\subset \YY\subset\XX_{m_0}$, but where also $\II_g\subset \WW\subset \XX_m$ such that $p(\YY)\subsetneq p(\WW)\subset\calA_g$. Notice that since $\II_g\subset\XX_m$ if and only if $m\in\calE^*$ (resp.~$\calO^*)$, the characteristics $m\in\calE^0$ (resp.~$\calO^1)$ do not occur in the above discussion.

\begin{rem} \label{rem:hyp} 
The above proposition also holds in a more general context. For example, let $M=(m_1, \dots, m_k)$ be a sequence of even characteristics, and let $\calE_{M}$ be the set of all ordered $k$-tuples of characteristics that form the $\Gamma_g$ orbit of $M$. Then we can also decompose $\XX\subset\HH_g$ as
$$\XX=\cup_{(n_1, \dots, n_k)\in \calE _{M}} \XX_{n_1, \dots, n_k}\,,$$
where $\XX_{n_1, \dots, n_k}:=\XX\cap\theta_{n_1,\,\rm null}\cap\dots\cap \theta_{n_k,\,\rm null}$. In this case we obtain similar statements under the assumption $\LL_g \subset \XX_{(m_1, \dots, m_k)}$. In particular this applies to the hyperelliptic case discussed in the previous section, giving an alternative approach to the results there.
\end{rem}
We will now apply this setup for the vanishing theta gradient loci.  

\begin{proof}[Proof of {\Cref{thm:Gg}}]
We apply~\Cref{prop:maxcomponent} for 
$$
\XX=\gn=\cup_m\gnm\subset\HH_g\,.
$$
To avoid confusion, denote $n_0:=\chars {110\dots0}{110\dots0}$ the even characteristic in genus $g-1$. Note that the locus $\YY=\HH_1\times\theta_{\rm n_0,\, null}\subset \HH_1\times \HH_{g-1}$ is irreducible and has codimension $g$ in $\HH_g$, thus is an irreducible component of $\gnmz$, containing $\LL_g^o$. Then \Cref{cor:maxcomponent} implies that $p(\YY)=\calA_1\times\vtn$ is an irreducible component of $\vgn$, which contains $\calD_g$ (and in fact of course contains $p(\LL_g^o)$).

For components containing the hyperelliptic locus, suppose $\YY\subset\gnm$ is an irreducible component of $\gnm$ containing some component $\XX$ of the hyperelliptic locus $\HJ_g$, such that $\XX\supset\II_g$. By~\Cref{lm:tsu}, there exists $\sigma\in\Gamma_g$ which lies in $Stab_{\II_g}$ and maps $m$ to $m_0$. We can now apply~\Cref{cor:maxcomponent} for $\sigma(\YY)\subset\gnmz$, yielding the result.
\end{proof}
 
\section{The hessian rank loci $\vtn^2$ and $\vtn^3$}\label{sec:tn}
In this section we investigate the geometry of the loci with given rank of the Hessian of the theta function near~$\II_g$, proving Theorems~\ref{thm:tn2} and ~\ref{thm:tn3}. Using~\Cref{cor:maxcomponent}, at the end of the day it will suffice to study the hessian of~$\theta_{m_0}$ near~$\II_g$.

\subsection{The rank two locus}
Our first goal is to prove~\Cref{thm:tn2}, that $\calA_1\times\calA_{g-1}$ is an irreducible component of $\vtn^2$. By applying~\Cref{cor:maxcomponent}, it will suffice to show that $\HH_1\times\HH_{g-1}$ is an irreducible component of $\tnmz^2$. For this, similarly to how we dealt with the locus $\vgn$, working locally near $\II_g$ we will compute the intersection of $\tnmz^2$ with the locus $Z\subset\HH_g$ given by the equations $\tau_{jk}=0$ for all $2\le j<k\le g$ and $\tau_{1j}=\tau_{2j}$ for all $3\le j\le g$. Note that $(\HH_1\times\HH_{g-1})\cap Z=\II_g$ by definition. Since $\HH_1\times\HH_{g-1}\subset\tnmz^2$, the following proposition will suffice to prove~\Cref{thm:tn2}. 
\begin{prop}
For a sufficiently small neighborhood $U$ of $\II_g$ the equality $\tnmz^2 \cap Z\cap U=\II_g$ holds.
\end{prop}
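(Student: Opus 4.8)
The plan is to exploit the fact that $Z$ forces a block decomposition with a single nontrivial off‑diagonal coordinate. The equations $\tau_{jk}=0$ for $2\le j<k\le g$ already give $\tau_{2j}=0$ for $j\ge 3$, so the constraints $\tau_{1j}=\tau_{2j}$ force $\tau_{1j}=0$ for $j\ge 3$ as well; hence every $\tau\in Z$ is block diagonal, $\tau=\tau_1\oplus\diag(t_3,\dots,t_g)$ with $\tau_1=\left(\begin{smallmatrix}t_1&\tau_{12}\\ \tau_{12}&t_2\end{smallmatrix}\right)\in\HH_2$ and $\tau_{12}$ the only free off‑diagonal entry. By the factorization~\eqref{eq:factorize}, on $Z$ we have $\theta_{m_0}(\tau)=\tc{11}{11}(\tau_1)\cdot\prod_{j\ge 3}\vt_j$, where each $\vt_j=\tc00(t_j)\ne 0$ for generic $t_j$. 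Thus on $Z\cap U$ the vanishing of $\theta_{m_0}$ is equivalent to the single genus‑two equation $\tc{11}{11}(\tau_1)=0$, and the whole problem becomes one about the $2\times 2$ block.

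First I would observe that the rank‑two condition is free on this slice. Trivially $\tnmz^2\subset\tnmz$, so it suffices to prove the a priori stronger statement $\tnmz\cap Z\cap U=\II_g$; but in fact one checks directly that the rank condition is automatic on $Z$, which is instructive. Restricting the Hessian of $\theta_{m_0}$ to $\tnmz\cap Z$, the factorization makes it block diagonal: every off‑diagonal entry involving an index $j\ge 3$ carries the factor $\partial_z\tc00(t_j)|_{0}=0$ ($\tc00$ being even), while each diagonal entry $(j,j)$ with $j\ge 3$ carries the factor $\tc{11}{11}(\tau_1)=0$. Hence only the top‑left block $\big(\partial_{z_a}\partial_{z_b}\tc{11}{11}(\tau_1)|_{0}\big)_{a,b\le 2}$ survives, and its rank is at most two. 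So $\tnmz^2\cap Z=\tnmz\cap Z$ throughout $U$, and the computation is genuinely one‑dimensional.

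Finally I would read off the zero locus of the surviving equation from its expansion near $\tau_{12}=0$. By~\eqref{eq:thetaexpand} with $\ell=2$ — equivalently, by the heat equation $\partial_{\tau_{12}}\tc{11}{11}=\frac{1}{2\pi i}\partial_{z_1}\partial_{z_2}\tc{11}{11}$ evaluated at $\tau_{12}=0$ — the leading term is linear, $\tc{11}{11}(\tau_1)=\frac{1}{2\pi i}\vt_1'\vt_2'\,\tau_{12}+O(\tau_{12}^{2})$, with coefficient $\vt_1'\vt_2'\ne 0$ at generic $t_1,t_2$. Therefore $\tc{11}{11}(\tau_1)$ has a simple zero along $\tau_{12}=0$, so after shrinking $U$ its only zero in $U$ is $\tau_{12}=0$, i.e. $\tau\in\II_g$. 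Conversely $\II_g\subset\tnmz^2$: there $\theta_{m_0}=0$ and the Hessian reduces to the single rank‑two block $\vt_1'\vt_2'\prod_{j\ge 3}\vt_j\cdot\left(\begin{smallmatrix}0&1\\1&0\end{smallmatrix}\right)$ in the first two coordinates. Combining the two inclusions gives $\tnmz^2\cap Z\cap U=\II_g$.

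The computation is short precisely because the slice $Z$ is over‑determined enough to annihilate the rank condition, and the only point that requires care is the block structure of the Hessian in the second step, which is what guarantees that nothing is lost in replacing $\tnmz^2$ by $\tnmz$ on $Z$. Once that is in place, the statement reduces to the transversality of a single theta‑null equation, controlled by the nonvanishing of $\vt_1'\vt_2'$ at a generic point of the diagonal; this is then fed into~\Cref{cor:maxcomponent} to identify $\calA_1\times\calA_{g-1}$ as a component of $\vtn^2$.
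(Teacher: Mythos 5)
Your argument is internally consistent, but it proves the wrong statement: you have taken the displayed equations for $Z$ at face value, whereas the paper's definition contains an off-by-one misprint. Read literally, $\tau_{jk}=0$ for all $2\le j<k\le g$ includes $\tau_{2j}=0$, so together with $\tau_{1j}=\tau_{2j}$ it forces $\tau_{1j}=0$ for all $j\ge 3$, and $Z$ collapses to the block-diagonal locus $\HH_2\times\HH_1\times\dots\times\HH_1$ --- exactly the reading on which your whole proof (factorization, block-diagonal Hessian, ``the rank condition is free on this slice'') rests. The intended $Z$, as the paper's own computation makes unambiguous, imposes $\tau_{jk}=0$ only for $3\le j<k\le g$ and keeps the $g-2$ variables $\tau_{1j}=\tau_{2j}$ free: the paper evaluates $\theta_{m_0}|_Z=\tfrac{1}{2\pi i}\tau_{12}+\tfrac{1}{(2\pi i)^2}\sum_{j\ge3}\phi_j\tau_{1j}^2+O(\ep^3)$ and $D_{12j}|_{Z\cap\tnmz}=(2\phi_j^2-\psi_j/2)\tau_{1j}^2+O(\ep^3)$, expressions that would vanish identically on your $Z$; this also matches the analogous slice $Y$ in the $\vgn$ argument, where the off-diagonal equations exclude the indices carrying the characteristic.

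The distinction is not cosmetic, because the proposition exists only to feed a codimension count. The slice method bounds any irreducible component $W\supseteq\HH_1\times\HH_{g-1}$ of $\tnmz^2$ by $\dim W\le\dim\II_g+\codim Z$. For the intended $Z$ one has $\codim Z=\binom{g-2}{2}+(g-2)=\binom{g-1}{2}$, so the bound is exactly $\dim(\HH_1\times\HH_{g-1})$ and \Cref{thm:tn2} follows. For your $Z$ the codimension is larger by $g-2$, so the bound overshoots $\dim(\HH_1\times\HH_{g-1})$ by $g-2$ and nothing follows --- consistently with the fact, which you yourself prove, that on your slice $\tnmz^2\cap Z=\tnmz\cap Z$: a slice on which the Hessian-rank condition becomes vacuous can never separate $\tnmz^2$ from the divisor $\tnmz$, so your last paragraph's appeal to \Cref{cor:maxcomponent} does not go through. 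On the intended $Z$ both of your key steps fail: $Z$ is not block diagonal, and the Hessian restricted to $Z$ has nonzero off-diagonal entries $\phi_j\tau_{2j}+O(\ep^2)$ and $\phi_j\tau_{1j}+O(\ep^2)$ in positions $(1,j)$ and $(2,j)$, so rank $\le 2$ is a genuine constraint. The missing content of the proposition is precisely the computation you skipped: the vanishing of the $3\times3$ principal minors $D_{12j}$ forces $\tau_{1j}^2=O(\ep^3)$ (using that $2\phi_j^2-\psi_j/2$ is not identically zero in $t_j$), after which $\theta_{m_0}=0$ forces $\tau_{12}=O(\ep^3)$, and \Cref{lm:cutexpand} then identifies the intersection with $\II_g$.
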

\begin{proof}
Indeed, we will plug in the defining equations of $Z$ into the defining equations of $\tnmz^2$ and check that they imply $\tau_{1j}=0$ for all $1<j\le g$. To see this, it will be sufficient to consider the principal $3\times 3$ minors of the Hessian of $\theta_{m_0}$ that include the first and second rows. Using the expansion~\eqref{hess2}, for the $3\times 3$ principal minor obtained by taking rows and columns $1,2,j$ for some $3\le j\le g$, we compute the determinant to be
$$
\begin{aligned}
D_{12j}:=\det(\ddots)&=\det\left(
\begin{smallmatrix}
\frac12\phi_1\cdot X_2&1&\phi_j\tau_{2j}+\phi_j\sum\phi_\alpha\tau_{2\alpha}\tau_{j\alpha}\\
*&\frac12\phi_2\cdot (X_2+Y_2) &\phi_j\tau_{1j}+\phi_j\sum\phi_\alpha\tau_{2\alpha}\tau_{j\alpha}\\
*&*&\frac12\phi_j\cdot (X_2+Y_2)+\frac12\psi_j\tau_{1j}\tau_{2j}
\end{smallmatrix}\right)\\ &=
-\frac12\phi_j\cdot (X_2+Y_2)+(2\phi_j^2-\psi_j/2)\tau_{1j}\tau_{2j}+O(\ep^3)\,.
\end{aligned}
$$
We also recall the expansion of $\theta_{m_0}$ itself, given by equation~\eqref{eq:thetaexpand}. Using~\Cref{lm:cutexpand}, we will work with the expansions of the theta constants and the determinants of the $3\times 3$ minors of the Hessian up to $O(\ep^3)$, intersected with $Z$. We thus compute
$$
\theta_{m_0}|_Z=\left([1,2]+Y_2\right)|_Z+O(\ep^3)=\tfrac{1}{2\pi i}\tau_{12}+\tfrac{1}{(2\pi i)^2}\sum_{j\ge 3} \phi_j\tau_{1j}^2+O(\ep^3)\,.
$$
Substituting this into $D_{12j}$ and dropping the common and generically non-zero factor of $\prod f_\alpha$ gives
$$
\begin{aligned}
D_{12j}|_{Z\cap\tnmz}&=\left.-\tfrac12\phi_j\cdot (\tau_{12}+Y_2)+(2\phi_j^2-\tfrac{\psi_j}{2})\tau_{1j}\tau_{2j}\right|_{Z\cap\tnmz}+O(\ep^3)\\
&=O(\ep^3)+(2\phi_j^2-\psi_j/2)\tau_{1j}^2\,.
\end{aligned}
$$
Since the expression $2\phi_j^2-\psi_j/2$ is not identically zero in $t_j$, for a generic value of $t_j$ the vanishing of $D_{12j}|_{Z\cap\tnmz}$ implies $\tau_{1j}^2=O(\ep^3)$ and then substituting this back, the vanishing of $\theta_{m_0}|_Z$ implies $\tau_{12}=O(\ep^3)$, so that altogether we get precisely the vanishing of $\tau_{12},\tau_{13}=\tau_{23},\dots,\tau_{1g}=\tau_{2g}$ up to higher order, as required.
\end{proof}
\begin{proof}[Proof of{~\Cref{thm:tn2}}]
We observe that by the factorization of theta functions the big diagonal $\LL^e\subset \tnmz^2\subset \tnmz$. The above computation, using \Cref{lm:cutexpand}, shows that $\HH_1\times\HH_{g_1}$ is an irreducible component of $\tnmz^2$ containing $\II_g$, and since the defining equations of $\vtn^2$ are $\Gamma_g$ invariant, by \Cref{cor:maxcomponent} it follows that $\calA_1\times\calA_{g-1}$ is an irreducible component of $\vtn^2$.
\end{proof}

\subsection{The rank three locus}
We now deal with the locus $\vtn^3$; here our goal is to show that the locus of Jacobians with a vanishing theta-null is an irreducible component. Recall that the locus $\tn\chars\ep\de\cap\JJ_g$ is purely of dimension $3g-4$, and in fact by \cite{mosm}, both $\JJ_g\subset\HH_g$ and $\tn\chars\ep\de\subset\HH_g$ are irreducible; moreover, the intersection $\vtn\cap\calJ_g\subset\calA_g$ is irreducible by~\cite{teixidor}.
Thus we can apply \Cref{cor:maxcomponent}, so that it will again suffice to work with the Taylor expansions of the Hessian of $\theta_{m_0}$, and not with an arbitrary characteristic.

The additional complication in this case is that, for high genus, the dimension of $\vtn\cap\calJ_g$ is smaller than the dimension of a number of irreducible components of $\calR_g$ that are contained in $\vtn^3$ and contain $\calD_g$ and $\LL_g^o$ (for example, for the component $\calA_g\times\calA_{g-2}$).

We demonstrate this issue in detail for the various components of interest in low genus. For $g=5$ we have 
$$\dim(\calA_3\times \calA_2)=9<\dim(\calJ_5\cap\vtn)=11=\dim (\calA_4\times\calA_1)\,,$$ 
and indeed $\calA_3\times\calA_2$ is contained in the closure of $\calJ_5\cap\vtn$ since $\calJ_3=\calA_3$. However, already for genus 6 we have 
$$\dim(\calA_4\times\calA_2)=10<\dim(\calJ_6\cap\vtn)=14<\dim(\calA_5\times\calA_1)=16\,,$$
and thus $\calA_4\times\calA_2$ must be contained in an irreducible component of $\vtn^3$ that has dimension at least 14; moreover, note that $\calJ_6\cap\vtn\not\supset\calA_4\times\calA_2$. 

This discussion makes the following statement more surprising in that we can show that components of $\vtn^3$ not contained in the decomposable locus have expected dimension.

\begin{thm}\label{thm:ell=2}
For any genus $g$, the locus $\tnmz^3\setminus\RR_g$ locally near $\II_g$ has dimension $3g-4$.
\end{thm}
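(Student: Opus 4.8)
The plan is to reduce the rank-three condition on the Hessian to a rank-one condition on a Schur complement, and then to bound the dimension using \Cref{lm:cutexpand}. The lower bound is immediate: since $\II_g\subset \JJ_g\cap\tnmz\subset\tnmz^3$, and $\JJ_g\cap\tnmz$ has dimension $3g-4$ and is not contained in $\RR_g$ (a generic Jacobian is indecomposable), the locus $\tnmz^3\setminus\RR_g$ has dimension at least $3g-4$ near $\II_g$. The whole content is therefore the upper bound. First I would exploit the block structure of the Hessian $H$ of $\theta_{m_0}$ visible in~\eqref{hess2}: write $H$ with the $\{1,2\}\times\{1,2\}$ block $A$, the $2\times(g-2)$ block $B$ in rows $1,2$, and the $(g-2)\times(g-2)$ block $C$ in the remaining rows. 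The key observation is that the antidiagonal entry $H_{12}$ has leading term $1$, so $\det A=-1+O(\ep^2)$ and $A$ is invertible in a neighborhood of $\II_g$. Hence $\rank H\le 3$ near $\II_g$ if and only if the symmetric Schur complement $S:=C-B^{t}A^{-1}B$, of size $g-2$, has rank at most $1$. As the locus of symmetric $(g-2)\times(g-2)$ matrices of rank $\le 1$ has codimension $\binom{g-2}{2}$, together with $\theta_{m_0}=0$ this yields the expected codimension $1+\binom{g-2}{2}=\tfrac{g^2-5g+8}{2}$, i.e.\ expected dimension $3g-4$; the problem is to show this is attained off $\RR_g$.

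Next I would compute the leading terms of $S$ restricted to $\tnmz$. Solving $\theta_{m_0}=0$ gives $\tau_{12}=-\tfrac1{2\pi i}\sum_{j\ge 3}\phi_j\tau_{1j}\tau_{2j}+O(\ep^3)$, equivalently $X_2+Y_2=O(\ep^3)$ on $\tnmz$. Substituting into the entries of~\eqref{hess2}, the term $\tfrac12\phi_j(X_2+Y_2)$ in $C_{jj}$ drops out and one finds $S_{jj}=(\tfrac12\psi_j-2\phi_j^2)\tfrac{\tau_{1j}\tau_{2j}}{(2\pi i)^2}+O(\ep^3)$, consistently with the computation of $D_{12j}$ in the proof for $\tnmz^2$, while the off-diagonal entries satisfy $S_{jk}=O(\ep^3)$ because their $O(\ep^2)$ part is $\phi_j\phi_k\tfrac{\tau_{12}\tau_{jk}}{(2\pi i)^2}$ and $\tau_{12}=O(\ep^2)$ on $\tnmz$. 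Thus on $\tnmz$ the Schur complement is diagonally dominant, its diagonal entries being of order $\ep^2$ and of the special product form $\tau_{1j}\tau_{2j}$. Consequently each $2\times2$ principal minor of $S$ has leading term proportional to $\tau_{1j}\tau_{2j}\,\tau_{1k}\tau_{2k}$, so to lowest order the rank-$\le1$ condition only forces at most one product $\tau_{1j}\tau_{2j}$ to be nonzero. This produces several branches of $\tnmz^3$ near $\II_g$, indexed by which of $\tau_{1j},\tau_{2j}$ is made to vanish.

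The branches on which $\rank S\le1$ holds identically to all orders are precisely the block-diagonal ones --- for instance $\tau_{2j}=0$ for all $j$, which decouples the second coordinate and lands in $\HH_1\times\HH_{g-1}\subset\RR_g$. These are the large-dimensional components of $\tnmz^3$, and they are exactly what the hypothesis $\setminus\RR_g$ removes. For the remaining, non-decomposable branches I would apply \Cref{lm:cutexpand} at a general point $x$ near $\II_g$ with $x\notin\RR_g$: since $\RR_g$ is closed, in a neighborhood of such $x$ the decomposable components are absent, so the bound produced by the lemma reflects only the non-decomposable components. The task is then to exhibit $\binom{g-2}{2}$ functions among the $2\times2$ minors of $S$, together with $\theta_{m_0}$, that are algebraically independent modulo a suitable power of the maximal ideal, bringing each leading-order branch down to dimension $3g-4$. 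Once this local dimension count is in place, \Cref{cor:maxcomponent} transports the statement to an arbitrary characteristic and feeds into the proof of \Cref{thm:tn3}.

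The main obstacle is exactly this last step. Because $S$ is diagonally dominant on $\tnmz$, the leading terms of its $2\times2$ minors factor as products $\tau_{1j}\tau_{2j}\,\tau_{1k}\tau_{2k}$, and the map $(\tau_{1j}\tau_{2j})_j\mapsto(\tau_{1j}\tau_{2j}\,\tau_{1k}\tau_{2k})_{j<k}$ has image of dimension only $g-2$; hence the naive leading-order count produces roughly $g-2$ independent conditions rather than the required $\binom{g-2}{2}$. The missing codimension must therefore be recovered from the higher-order Taylor terms of $S$, which forces computing the order-$\ep^3$ corrections to the off-diagonal entries $S_{jk}$ (these are not recorded in~\eqref{hess2} and must be derived from the general expansions of \Cref{sec:expand}). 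The delicate part is to carry out this higher-order analysis branch by branch and to verify simultaneously that every branch on which the higher-order minors \emph{fail} to impose full codimension is decomposable, hence already excluded --- so that the only surviving, non-decomposable component is the one of dimension $3g-4$ coming from $\JJ_g\cap\tnmz$.
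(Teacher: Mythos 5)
Your reduction to the Schur complement $S=C-B^{t}A^{-1}B$ and your leading-order computations (namely $S_{jj}=(\tfrac12\psi_j-2\phi_j^2)\tfrac{\tau_{1j}\tau_{2j}}{(2\pi i)^2}+O(\ep^3)$ and $S_{jk}=O(\ep^3)$ for $j\ne k$) are correct, and you have correctly located the difficulty; but the proposal stops exactly there, so it does not prove the theorem. The lower bound via $\JJ_g\cap\tnmz$ is the trivial direction; the entire content of the statement is the upper bound, i.e.\ extracting codimension $1+\binom{g-2}{2}$ from $\theta_{m_0}=0$ and $\rank S\le 1$ away from $\RR_g$, and this is precisely the step you defer as ``the delicate part\dots branch by branch.'' The degeneration you observe is unavoidable in your setup: you expand at $\II_g$, where every potential pivot entry of $S$ vanishes, so the $2\times2$ minors collapse to products $\tau_{1j}\tau_{2j}\tau_{1k}\tau_{2k}$ and only $O(g)$ conditions survive at leading order. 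The higher-order analysis you postpone is not a routine verification but the actual proof, and as written there is no indication of how to organize it so that the non-decomposable branches are forced down to dimension $3g-4$.

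The missing idea --- and the way the paper proceeds --- is to use the hypothesis ``$\setminus\RR_g$'' through the rank being \emph{exactly} $3$ at the points of interest: then some principal $3\times3$ minor containing the invertible block $A$ is nondegenerate, say $D_{123}\ne0$, whose leading term (by the computation in the proof of \Cref{thm:tn2}) is proportional to $\tau_{13}\tau_{23}$; in your language this is a nonzero diagonal entry $S_{33}$ of the Schur complement. One then pivots on this $3\times3$ block rather than on $A$: given $D_{123}\ne 0$, the condition $\rank H\le 3$ becomes the vanishing of \emph{all} bordered $4\times4$ determinants $D_{jk}$ (rows $123j$, columns $123k$, $4\le j\le k\le g$), i.e.\ of a full $(g-3)\times(g-3)$ Schur complement --- one equation per entry, $\binom{g-2}{2}$ in total, instead of a rank-$\le1$ condition whose minors interfere. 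The nonvanishing anchor $\tau_{13}$ is what makes each equation nondegenerate in a distinct coordinate: restricting to the auxiliary slice $Z=\{\tau_{1j}=\tau_{2j},\ 3\le j\le g\}$ of codimension $g-2$ (which lowers the target bound to $2g-2$ and keeps the expansions manageable), the paper shows that $D_{jj}=0$ has leading term $-\tau_{13}^2\tau_{1j}^2(4\phi_3^2-\psi_3)(4\phi_j^2-\psi_j)/4$, forcing $\tau_{1j}=O(\ep^2)$, and that the only low-order dependence of $D_{jk}$ on $\tau_{jk}$ is the term $-\phi_j\phi_k\tau_{jk}\tau_{13}^2D_{123}$, so $D_{jk}=0$ solves for $\tau_{jk}$ in terms of the remaining variables; \Cref{lm:cutexpand} then bounds $\dim(U\cap Z\cap\tnmz^3)$ by the number of surviving coordinates $t_1,\dots,t_g,\tau_{13},\tau_{34},\dots,\tau_{3g}$, i.e.\ $2g-2$, whence the bound $3g-4$. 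Equivalently, in your framework: pivot on $S_{33}\ne0$ and use the bordered minors $S_{33}S_{jk}-S_{3j}S_{3k}$. Without identifying and exploiting this anchor, your claim that every branch failing to impose full codimension is decomposable remains unproved, and it is exactly the claim the theorem asserts.
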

As a consequence, all irreducible components of $\vtn^3$ containing $\calD_g$ and not contained in the decomposable locus $\calR_g$ must have dimension $\leq 3g-4$.
\begin{cor}
For any genus $g\geq 3$, the locus $\vtn^3\setminus\calR_g $ locally near $\calD_g$ has dimension equal to $3g-4$.
\end{cor}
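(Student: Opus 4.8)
The Corollary is the combination of the upper bound of \Cref{thm:ell=2} with a matching lower bound, and I would present it as such. For the lower bound, recall that Riemann's singularity theorem gives $\calJ_g\cap\vtn\subset\vtn^3$, that $\calD_g\subset\calJ_g\cap\vtn$ (since $\calD_g\subset\calJ_g$ and $\calD_g\subset\calR_g\subset\vtn^2\subset\vtn$), that $\calJ_g\cap\vtn\not\subset\calR_g$ because a generic Jacobian is indecomposable, and that $\calJ_g\cap\vtn$ is purely of dimension $3g-4$; by \Cref{thm:tn3} it is moreover an irreducible component of $\vtn^3$. Hence $\vtn^3\setminus\calR_g$ has, through $\calD_g$, a component of dimension exactly $3g-4$. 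For the upper bound, \Cref{cor:maxcomponent} identifies the irreducible components of $\vtn^3$ through $\calD_g$ that are not contained in $\calR_g$ with images of components of $\tnmz^3$ through $\II_g$ not contained in $\RR_g$, so it suffices to bound the dimension of the latter, which is precisely \Cref{thm:ell=2}.

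To prove \Cref{thm:ell=2} I would study the Hessian $H=\left(\partial_{z_a}\partial_{z_b}\theta_{m_0}\right)|_{z=0}$ near $\II_g$ through the expansions of \Cref{sec:expand}. The decisive structural point is that the entry joining the two columns $\chars11$ of $m_0$ is large: by the heat equation $\partial_{z_1}\partial_{z_2}\theta_{m_0}=2\pi i\,\partial_{\tau_{12}}\theta_{m_0}=2\pi i\prod_\alpha f_\alpha+O(\ep^2)$, which is nonzero at a generic point of $\II_g$, whereas every remaining entry of $H$ is $O(\ep)$. Writing $H=\left(\begin{smallmatrix}A&B\\ B^t&C\end{smallmatrix}\right)$ with $A$ the $2\times 2$ block indexed by the first two coordinates, $A$ is invertible on a neighborhood $U$ of a generic point of $\II_g$, so that $\rank H\le 3$ if and only if the Schur complement $S:=C-B^tA^{-1}B$, a symmetric $(g-2)\times(g-2)$ matrix, has $\rank S\le 1$. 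Thus $\tnmz^3$ is cut out near $\II_g$ by $\theta_{m_0}=0$ together with the vanishing of all $2\times 2$ minors of $S$.

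The plan is then to expand $S$ and apply \Cref{lm:cutexpand}. After imposing $\theta_{m_0}=0$, which by \eqref{eq:thetaexpand} amounts to $\tau_{12}=-\tfrac{1}{2\pi i}\sum_{j\ge 3}\phi_j\tau_{1j}\tau_{2j}+O(\ep^3)$, a direct computation of the leading terms from \eqref{hess2} gives diagonal entries $S_{jj}$ of order $\ep^2$ and off-diagonal entries $S_{jk}$ of order $\ep^3$, so that to leading order $S$ is diagonal, with $S_{jj}$ equal, up to the common nonzero factor $\prod_\alpha f_\alpha$, to $\phi_j\tau_{12}-c\,\phi_j^2\tau_{1j}\tau_{2j}$ for an explicit nonzero constant $c$. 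The vanishing of the $2\times 2$ minors then forces all but one of the quantities $\phi_j\tau_{1j}\tau_{2j}$ to agree to this order, and feeding these relations, together with the equations of a transverse slice $W$ of codimension $2g-4$ through $\II_g$, into \Cref{lm:cutexpand} should bound, off $\RR_g$, the local dimension by $\dim\II_g+\codim W=g+(2g-4)=3g-4$. The slice $W$ is to be chosen, in the spirit of the locus $Z$ used for $\vtn^2$, so that it meets $\JJ_g\cap\tnmz$ only along $\II_g$; the non-degenerate factors such as $2\phi_j^2-\psi_j/2$ that already appeared in the rank-two computation will again supply the needed independence at a generic point of $\II_g$.

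The main obstacle is exactly the separation of $\RR_g$. The rank-one condition on $S$ is determinantal rather than a complete intersection, and the map $\tau\mapsto S(\tau)$ is far from generic; on $\RR_g$ the block structure of $\tau$ makes $S$ degenerate so that its rank drops for free, and this is what produces the genuinely larger components of $\vtn^3$ (such as $\calA_{g-1}\times\calA_1$ or $\calA_{g-2}\times\calA_2$) that the theorem must discard. A single transverse slice cannot separate these, since some of them have codimension smaller than $2g-4$; the delicate step is therefore to show that every local analytic branch of $\tnmz^3$ at a generic point of $\II_g$ which is not contained in $\RR_g$ already satisfies the full expected number $1+\binom{g-2}{2}$ of independent conditions modulo a suitable power $\frakm^h$ of the maximal ideal, the failures of independence being confined to $\RR_g$. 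Establishing this independence off $\RR_g$ — equivalently, that $\theta_{m_0}$ and the minors of $S$ cut out a scheme whose only branch through $\II_g$ outside $\RR_g$ has the expected dimension — is where the real work lies; once it is in place, the dimension count above, combined with the lower bound from the first paragraph, yields the corollary.
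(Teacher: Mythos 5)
Your first paragraph is, in substance, the paper's own proof of this corollary: the lower bound comes from $\calJ_g\cap\vtn\subset\vtn^3$, which has dimension $3g-4$, contains $\calD_g$, and is not contained in $\calR_g$, while the upper bound is \Cref{thm:ell=2} transferred to $\calA_g$ via \Cref{cor:maxcomponent} (the paper's proof consists exactly of noting $\LL_g^e\subset\tnmz\cap\JJ_g\subset\tnmz^3$ and the $\Gamma_g$-invariance of $\vtn^3$, so that \Cref{cor:maxcomponent} applies). Two remarks: your citation of \Cref{thm:tn3} is unnecessary and, in the paper's logical order, backwards, since \Cref{thm:tn3} is deduced \emph{from} this corollary --- though your lower bound never actually uses it; and \Cref{cor:maxcomponent} is stated only in one direction (components upstairs containing $\LL_g^e$ have images that are components downstairs containing $\calD_g$), so ``identifies'' overstates it slightly, but this matches the brevity of the paper's own argument.

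The problem is that you then undertake to prove \Cref{thm:ell=2} itself and, by your own admission, stop exactly at its core. Your Schur-complement framing is sound and your leading-order computations are essentially right: after imposing $\theta_{m_0}=0$ the complement $S$ is diagonal to leading order, with $S_{jj}$ proportional to $(\psi_j-2\phi_j^2)\tau_{1j}\tau_{2j}$ (note that $\rank S\le 1$ then forces all but one of these diagonal entries to \emph{vanish} to leading order, not to ``agree''). But the mechanism for discarding the branches on which $S$ degenerates for free --- which you correctly isolate as ``where the real work lies'' --- is precisely the content of \Cref{thm:ell=2}, and the paper has a concrete device for it that your sketch lacks: on the locus where the Hessian rank is exactly $3$, since the $2\times 2$ minor in rows and columns $1,2$ is nondegenerate (determinant $-1+O(\ep^2)$), some \emph{principal} $3\times3$ minor containing rows $1,2$ is nondegenerate, and after renumbering one may take it to be $D_{123}$, whose leading term is a nonzero multiple of $\tau_{13}^2$; hence $\tau_{13}\ne 0$ on every branch under consideration. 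This single nonvanishing is what confines the degenerations to $\RR_g$: on the slice $Z=\lbrace\tau_{1j}=\tau_{2j}\rbrace$, the principal $4\times4$ minors have leading term $-\tau_{13}^2\tau_{1j}^2(4\phi_3^2-\psi_3)(4\phi_j^2-\psi_j)/4$ as in \eqref{eq:ep4}, so their vanishing forces $\tau_{1j}=O(\ep^2)$ \emph{because} $\tau_{13}\ne0$, and then the mixed minors $D_{jk}$ determine each $\tau_{jk}$ up to higher order; \Cref{lm:cutexpand} then bounds the local dimension on $Z$ by $2g-2$, hence by $3g-4$ off the slice. Without this step, or a substitute for it, your dimension count does not close: as a self-contained argument the proposal has a genuine gap at the heart of \Cref{thm:ell=2}, while as a derivation of the corollary \emph{from} \Cref{thm:ell=2} it is correct and coincides with the paper's.
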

\begin{proof}
Indeed, we observe that $\LL_g^e\subset \tnmz\cap\JJ_g\subset \tnmz^3$. The locus $\vtn^3$ is by definition $\Gamma_g$-invariant, and hence the conditions of~\Cref{cor:maxcomponent} are satisfied.
\end{proof}
An immediate consequence is that the $(3g-4)$-dimensional irreducible locus $\calJ_g\cap \vtn=p(\tnmz\cap\JJ_g)$, contained in $\vtn^3$, is an irreducible component of $\vtn^3$. This finishes the proof of~\Cref{thm:tn3}, once we obtain the local dimension statement.
\begin{proof}[Proof of~\Cref{thm:ell=2}]
As above, we will be working in a sufficiently small neighborhood $U\supset\II_g$ of the diagonal, using the expansions~\eqref{eq:thetaexpand} for $\theta_{m_0}$ and the expansion~\eqref{hess2} for the $4\times 4$ minors of its Hessian. We first note that the vanishing of $\theta_{m_0}=X_2+Y_2+O(\ep^3)$ implies that $\tau_{12}=O(\ep^2)$ and moreover that $X_2+Y_2=O(\ep^3)$. 

Since we are interested in the locus where the rank of the Hessian is equal to 3, and not 2, and since the Hessian symmetric, there must exist a {\em principal} $3\times 3$ minor of the Hessian with a non-zero determinant. Notice that the $2\times 2$ principal minor of the Hessian formed by rows and columns $1$ and $2$ becomes, after plugging in $X_2+Y_2=O(\ep^3)$ from the vanishing of $\theta_{m_0}$, equal to $\left(\begin{smallmatrix} 
O(\ep^3)&1+O(\ep^2)\\ 1+O(\ep^2)&O(\ep^3)\end{smallmatrix}\right)$, so that its determinant is equal to $-1+O(\ep^2)$, and thus non-zero. Since this is a non-degenerate principal $2\times 2$ minor, for the matrix to have rank equal to $3$, it must be contained in a non-degenerate $3\times 3$ minor. Moreover, since the matrix is symmetric, there must exists a principal such non-degenerate $3\times 3$ minor, and by renumbering the coordinates, we thus assume without loss of generality that this non-degenerate minor is made up by rows and columns number $1,2,3$. 
Similarly to the proof of \Cref{thm:tn2}, for convenience we will intersect, in a neighborhood $U\supset\II_g$, the locus $\tnmz^3$, with the codimension $g-2$ subvariety $Z\subset\HH_g$ given by equations $\tau_{1j}=\tau_{2j}$ for all $3\le j\le g$. Our goal is to prove that $U\cap Z\cap\tnmz^3$ has dimension at most $(3g-4)-(g-2)=2g-2$. Indeed, since we know that $\JJ_g\cap\tnmz)$ has dimension precisely $3g-4$, and thus $\dim (U\cap\JJ_g\cap\tnmz\cap Z)\ge 2g-2$ is contained in $\tnmz^3\cap Z$, this will then imply \Cref{thm:ell=2}.

To bound from above the dimension of $U\cap Z\cap\tnmz^3$, we will look at determinants $D_{jk}$ of the $4\times 4$ minors of the Hessian made up of rows $(123j)$ and columns $(123k)$, for any $4\le j\le k\le g$. We will see that on $U\cap Z\cap\tnmz$, for $j=k$ the vanishing of $D_{jj}$ will require $\tau_{1j}=\tau_{2j}$ to vanish to higher order, and then we will see that the vanishing of $D_{jk}$ for $j<k$ determines $\tau_{jk}$ in terms of the other variables, up to higher order. Thus altogether, up to higher order, the point of $U\cap Z\cap\tnmz^3$ will be determined by the values of the diagonal period matrix elements $t_1=\tau_{11},\dots,t_g=\tau_{gg}$, together with $\tau_{13}=\tau_{23}$, and $\tau_{34},\dots,\tau_{3g}$ (recall that $\tau_{12}$ is determined in terms of other coordinates, up to higher order, from the vanishing of $\theta_{m_0}$. Thus altogether by applying \Cref{lm:cutexpand}, we will see that the dimension of $U\cap Z\cap\tnmz^3$ is equal the number of these coordinates, i.e.~$g+1+(g-3)=2g-2$. We now inspect these $4\times 4$ minors in detail.

First, recall from the proof of \Cref{thm:tn2} that the determinant $D_{123}$ of the $3\times 3$ minor of the Hessian formed by the first 3 rows and columns is equal, up to higher order terms and generically non-vanishing factor, to $\tau_{13}^2$. Since $D_{123}\ne 0$ by assumption, this means that $\tau_{13}\ne 0$.

Now, for a principal $4\times 4$ minor $D_{jj}$, we plug in $X_2+Y_2=O(\ep^3)$ into~\eqref{hess2}, and see that the lowest order entries of the minor are as follows:
$$
\left(
\begin{smallmatrix}
O(\ep^3)&1+O(\ep^2)&\phi_3\cdot[2,3]+O(\ep^2)&\phi_j\cdot[2,j]+O(\ep^2)\\
*&O(\ep^3) &\phi_3\cdot[1,3]+O(\ep^2)&\phi_j\cdot[1,j]+O(\ep^2)\\
*&*&\frac12\psi_3\cdot[3,3,1,2]+O(\ep^3)&\phi_3\cdot\phi_j\cdot[3,j,1,2]+\dots\\
*&*&*&\frac12\psi_j\cdot[j,j,1,2]+O(\ep^3)
\end{smallmatrix}\right)\,.
$$
Thus the lowest order term that could appear in the determinant of this matrix is of order $O(\ep^4)$, and we write it explicitly in terms of the entries of the period matrix (noting, importantly, that in $[j,k,1,2]$ the term $\tau_{jk}\tau_{12}$ is higher order, and using Maple to compute safely)
\begin{equation}\label{eq:ep4}
 D_{12jk}=-\tau_{13}^2\tau_{1j}^2(4\phi_3^2-\psi_3)(4\phi_j^2-\psi_j)/4+O(\ep^5)
\end{equation}
For generic values of $t_3,t_j$ the expressions depending on them are non-zero, and thus the vanishing of such a determinant implies, since $\tau_{13}\ne 0$, that $\tau_{1j}=0$, up to higher order terms.

We now inspect the determinant of $D_{jk}$ of the $4\times 4$ minor formed by rows $(123j)$ and columns $(123k)$ for $j<k$; all the terms can again be read off from~\eqref{hess2}, so that the corresponding $4\times 4$ minor is as follows (where to make the formula fit we dropped the $1/2\pi i$ factors in front of each $\tau$, coming from the bracket expressions, and we recalled $\tau_{1a}=\tau_{2a}$ for $a=3,j,k$). Note that the fourth row and column of the minor are no longer symmetric.
$$
\left(
\begin{smallmatrix}
O(\ep^3)&1+O(\ep^2)&\phi_3\tau_{13}+O(\ep^2)&\phi_k\tau_{1k}+O(\ep^2)\\
1+O(\ep^2)&O(\ep^3) &\phi_3\tau_{13}+O(\ep^2)&\phi_k\tau_{1k}+O(\ep^2)\\
\phi_3\tau_{13}+O(\ep^2)&\phi_3\tau_{13}+O(\ep^2)&\frac12\psi_3\cdot[3,3,1,2]+O(\ep^3)&\phi_3\cdot\phi_k\cdot[3,k,1,2]+\dots\\
\phi_j\tau_{1j}+O(\ep^2)&\phi_j\tau_{1j}+O(\ep^2)&\phi_3\cdot\phi_j\cdot[3,j,1,2]+\dots&\phi_j\cdot\phi_k\cdot[j,k,1,2]+\dots
\end{smallmatrix}\right)\,.
$$
Notice, however, that this formula does not really give the lowest order terms of the expansion, as indeed by the vanishing of the determinants $D_{jj}$ and $D_{kk}$ of the principal minors we know that $\tau_{1j},\tau_{1k}=O(\ep^2)$. Thus in fact the entries $(1,k),(2,k),(j,1),(k,1)$ of the minor containing these entries are themselves of order $O(\ep^2)$, while the correction term to $\phi_j\tau_{1j}$ is actually of higher order, as all the brackets involved will contain $\tau_{1j}$ or $\tau_{12}$, and are thus of order at least one higher than their degree in $\tau$'s. 

What we want to determine is the dependence of $D_{jk}$ on $\tau_{jk}$, more precisely we want to determine the lowest order term that contains $\tau_{jk}$. By inspection, we see that 
$$
 [j,k,1,2]=\tau_{jk}\tau_{12}+2\tau_{1j}\tau_{2k}=\tau_{jk}\tau_{12}+O(\ep^4)
$$
appearing in the $(j,k)$ entry of the minor above is the only entry where $\tau_{jk}$ appears. We recall that from the vanishing of $\tnmz$, given by expansion~\eqref{eq:thetaexpand}, we have (again, up to all the $\pm 2\pi i$ factors) 
$$
X_2+Y_2=O(\ep^3)=\tau_{12}+\sum_{a>2}\phi_a\tau_{1a}\tau_{2a}=\tau_{12}+\tau_{13}^2+\sum_{j>3}\tau_{1j}^2=\tau_{12}+\tau_{13}^2+O(\ep^4)\,,
$$
since $\tau_{1j}=O(\ep^2)$. Thus we see that $\tau_{12}=-\tau_{13}^2+O(\ep^3)$, and is of order precisely $\ep^2$, as $\tau_{13}$ is non-zero due to the assumed non-vanishing of the determinant $D_{123}$. Thus the $(j,k)$ entry of the minor above contributes $\phi_j\phi_k(-\tau_{jk}\tau_{13}^2+O(\ep^4))\cdot D_{123}$ to $D_{jk}$, where we expanded $D_{jk}$ using the last row. By assumption $D_{123}$ is non-zero, and in fact of order $O(\ep^2)$ as discussed above. By inspection of the minor, the only other places where $\tau_{jk}$ appears in the minor are when expanding brackets of 6 terms, and then at least two of these terms would be of order $O(\ep^2)$, so we have found that the only dependence of $D_{jk}$ modulo $O(\ep^6)$ on $\tau_{jk}$ is the term $-\phi_j\phi_k\tau_{jk}\tau_{13}^2\cdot D_{123}$. Thus requiring $D_{jk}$ to vanish modulo $O(\ep^6)$ expresses $\tau_{jk}$ in terms of the other variables, modulo $O(\ep^2)$.

Thus altogether each $\tau_{1j}=\tau_{2j}$ must be of order $O(\ep^2)$ by the vanishing of $D_{jj}$, while each $\tau_{jk}$ is expressed in terms of the rest of the entries in the first 3 rows of the period matrix, and the diagonal entries, by computing the $O(\ep^5)$ term of $D_{jk}$ and requiring it to vanish. Altogether we see that the local dimension of the locus $\tnmz^3\cap U\cap Z$ is as claimed.
\end{proof}


\begin{thebibliography}{FGSM21}

\bibitem[AC21]{agch}
D.~Agostini and L.~Chua.
\newblock On the {S}chottky problem for genus-five {J}acobians with a vanishing
 theta-null.
\newblock {\em Ann. Sc. Norm. Super. Pisa Cl. Sci. (5)}, 22(1):333--350, 2021.

\bibitem[Far06]{farkashvanishing}
H.~Farkas.
\newblock Vanishing thetanulls and {J}acobians.
\newblock In {\em The geometry of {R}iemann surfaces and abelian varieties},
 volume 397 of {\em Contemp. Math.}, pages 37--53. Amer. Math. Soc.,
 Providence, RI, 2006.

\bibitem[FGSM21]{fagrsm}
H.~Farkas, S.~Grushevsky, and R.~Salvati~Manni.
\newblock An explicit solution to the weak {S}chottky problem.
\newblock {\em Algebr. Geom.}, 8(3):358--373, 2021.

\bibitem[Fre68]{freitagF}
E.~Freitag.
\newblock Fortsetzung von automorphen {F}unktionen.
\newblock {\em Math. Ann.}, 177:95--100, 1968.

\bibitem[Fre91]{freitagbooksingular}
E.~Freitag.
\newblock {\em Singular modular forms and theta relations}, volume 1487 of {\em
 Lecture Notes in Mathematics}.
\newblock Springer-Verlag, Berlin, 1991.

\bibitem[GH12]{grhu1}
S.~Grushevsky and K.~Hulek.
\newblock The class of the locus of intermediate {J}acobians of cubic
 threefolds.
\newblock {\em Invent. Math.}, 190(1):119--168, 2012.

\bibitem[GSM07]{grsmordertwo}
S.~Grushevsky and R.~Salvati~Manni.
\newblock Singularities of the theta divisor at points of order two.
\newblock {\em Int. Math. Res. Not. IMRN}, (15):Art. ID rnm045, 15, 2007.

\bibitem[GSM08]{grsmgen4}
S.~Grushevsky and R.~Salvati~Manni.
\newblock Jacobians with a vanishing theta-null in genus 4.
\newblock {\em Israel J. Math.}, 164:303--315, 2008.

\bibitem[GSM09]{grsmconjectures}
S.~Grushevsky and R.~Salvati~Manni.
\newblock The loci of abelian varieties with points of high multiplicity on the
 theta divisor.
\newblock {\em Geom. Dedicata}, 139:233--247, 2009.

\bibitem[Igu72]{igusabook}
J.-I. Igusa.
\newblock {\em Theta functions}, volume 194 of {\em Grundlehren der
 Mathematischen Wissenschaften}.
\newblock Springer-Verlag, New York, 1972.

\bibitem[Igu80]{igusajacobi}
J.-I. Igusa.
\newblock On {J}acobi's derivative formula and its generalizations.
\newblock {\em Amer. J. Math.}, 102(2):409--446, 1980.

\bibitem[Mah69]{mahler}
K.~Mahler.
\newblock On algebraic differential equations satisfied by automorphic
 functions.
\newblock {\em J. Austral. Math. Soc.}, 10:445--450, 1969.

\bibitem[MSM21]{mosm}
G.~Mondello and R.~Salvati~Manni.
\newblock Totally irreducible subvarieties of {S}iegel moduli spaces.
\newblock 2021.

\bibitem[Mum83]{mumforddimag}
D.~Mumford.
\newblock On the {K}odaira dimension of the {S}iegel modular variety.
\newblock In {\em Algebraic geometry---open problems ({R}avello, 1982)}, volume
 997 of {\em Lecture Notes in Math.}, pages 348--375, Berlin, 1983. Springer.

\bibitem[Mum07]{mumfordbooktheta2}
D.~Mumford.
\newblock {\em Tata lectures on theta. {II}}.
\newblock Modern Birkh\"auser Classics. Birkh\"auser Boston Inc., Boston, MA,
 2007.
\newblock Jacobian theta functions and differential equations, With the
 collaboration of C. Musili, M. Nori, E. Previato, M. Stillman and H. Umemura,
 Reprint of the 1984 original.

\bibitem[Poo94]{poor}
C.~Poor.
\newblock The hyperelliptic locus.
\newblock {\em Duke Math. J.}, 76(3):809--884, 1994.

\bibitem[SB21]{sbpoincare}
N.~I. Shepherd-Barron.
\newblock Asymptotic period relations for {J}acobian elliptic surfaces.
\newblock {\em Proc. Lond. Math. Soc. (3)}, 122(4):479--520, 2021.

\bibitem[SM94]{smlevel2}
R.~Salvati~Manni.
\newblock Modular varieties with level {$2$} theta structure.
\newblock {\em Amer. J. Math.}, 116(6):1489--1511, 1994.

\bibitem[TiB88]{teixidor}
M.~Teixidor~i Bigas.
\newblock The divisor of curves with a vanishing theta-null.
\newblock {\em Compositio Math.}, 66(1):15--22, 1988.

\bibitem[Tsu91]{tsu}
Sh. Tsuyumine.
\newblock Thetanullwerte on a moduli space of curves and hyperelliptic loci.
\newblock {\em Math. Z.}, 207(4):539--568, 1991.

\end{thebibliography}

\end{document}